\documentclass[final]{siamart251216}
\usepackage[english]{babel}
\usepackage[utf8]{inputenc}
\usepackage[T1]{fontenc}
\usepackage{lipsum}
\usepackage{amsfonts, amsmath, amssymb, mathtools}
\usepackage{graphicx}
\usepackage{cleveref}
\usepackage{enumerate}
\usepackage{epstopdf}
\usepackage{algorithmic}
\ifpdf%
  \DeclareGraphicsExtensions{.eps,.pdf,.png,.jpg}
\else
  \DeclareGraphicsExtensions{.eps}
\fi
\usepackage{amsopn}
\DeclareMathOperator{\diag}{diag}
\usepackage{booktabs}

\newcommand{\TheTitle}{Norm-based convergence bounds for nonsymmetric algebraic V-cycle multigrid methods}

\newcommand{\TheShortTitle}{Convergence bounds for V-cycle multigrid methods}

\allowdisplaybreaks
\DeclareMathOperator{\R}{\mathbb{R}}
\DeclareMathOperator{\C}{\mathbb{C}}
\DeclareMathOperator{\N}{\mathbb{N}}

\DeclareMathOperator{\calR}{\mathcal{R}}
\DeclareMathOperator{\calN}{\mathcal{N}}

\DeclareMathOperator{\rank}{rank}

\newcommand{\abs}[1]{\ensuremath{\left\vert#1\right\vert}}
\newcommand{\m}[1]{\begin{bmatrix}#1\end{bmatrix}}
\newcommand{\Cnn}{\C^{n \times n}}
\newcommand{\Cnnc}{\C^{n \times n_c}}
\newcommand{\Cncnc}{\C^{n_c \times n_c}}

\newcommand{\Ekbackp}{E_{k,+}\backslash}
\newcommand{\Ekslashp}{E_{k,+}/}
\newcommand{\Ekbackpnu}{E_{k,+}^{\nu_1}\backslash}
\newcommand{\Ekslashpnu}{E_{k,+}^{\nu_2}/}

\newcommand{\Ekoneslashp}{E_{k+1,+}/}
\newcommand{\Ekonebackpnu}{E_{k+1,+}^{\nu_1}\backslash}
\newcommand{\Ekoneslashpnu}{E_{k+1,+}^{\nu_2}/}

\newcommand{\Eplus}{E_+^{\nu_1, \nu_2}}
\newcommand{\Esouth}{E^{\nu_1, \nu_2}}
\newcommand{\tildeMinvB}{\widetilde{M}^{-1}B}
\newcommand{\hatMinvB}{\widehat{M}^{-1}B}
\newcommand{\lambdaquer}{\overline{\lambda}}

\DeclarePairedDelimiter{\norm}{\lVert}{\rVert}
\def\<{\langle}
\def\>{\rangle}
\def\{{\lbrace}
\def\}{\rbrace}

\newsiamremark{remark}{Remark}
\newsiamremark{example}{Example}
\AddToHook{env/remark/begin}{\crefalias{theorem}{remark}}
\AddToHook{env/example/begin}{\crefalias{theorem}{example}}
\crefname{remark}{Remark}{Remarks}
\crefname{example}{Example}{Examples}

\author{Reinhard Nabben\thanks{Institute of Mathematics, Technische Universität Berlin, (\email{nabben@math.tu-berlin.de}).} 
\and Ludwig Rooch\thanks{Institute of Mathematics, Technische Universität Berlin, (\email{rooch@math.tu-berlin.de}).}}
\title{{\TheTitle}\thanks{Submitted to the editors DATE.}}
\headers{\TheShortTitle}{Reinhard Nabben, Ludwig Rooch}
\ifpdf%
\hypersetup{%
  pdftitle={\TheTitle},
  pdfauthor={Reinhard Nabben, Ludwig Rooch}
}
\fi

\begin{document}
\maketitle

\begin{abstract}
Recently a new approach to analyze and create algebraic multigrid methods (AMG) for nonsymmetric and indefinite matrices was established. Convergence is measured in general norms induced by a certain HPD matrix $B$ and $B$-orthogonal projections built by compatible transfer operators are used. Here we continue our theoretical framework, started in \cite{NabRoo2026a}, for nonsymmetric algebraic multigrid methods using any HPD matrix $B$ to induce a norm. Our framework not only includes all recent results but also provides many new results. 
We consider two, slightly different, multigrid operators. The first one is the natural generalization of the error operator in the HPD case. The second operator is simpler to apply and has been studied before. However, an additional condition for the smoother $M^{-1}A$ is needed, which is in our terminology the $B$-normality. We explain the differences and similarities of both operators in detail and show, why the extra condition is needed. We consider arbitrary interpolation and restriction operators that result in $B$-orthogonal coarse-grid corrections and give sharp estimates for the norm of the error propagation matrices for the two-grid methods. We also show, that the norms are decreasing if we increase the size of the coarse space. Moreover, we are able to extend the landmark $V$-cycle bound by McCormick to the nonsymmetric case.    
\end{abstract}

\begin{keywords}
  algebraic multigrid; nonsymmetric; indefinite; two-grid methods, V-cycle; multigrid methods, convergence bounds; $B$-normal matrices
\end{keywords}

\begin{MSCcodes}
    65F08, 65F10, 65F15, 65F35, 65F50
\end{MSCcodes}

\section{Introduction}\label{sec:introduction}

Algebraic multigrid (AMG) methods are among the most efficient methods to solve linear algebraic systems $Ax = b$ with nonsingular $A \in \Cnn$. 
They were first introduced by Brandt, McCormick and Ruge \cite{BraMcCRug1985} and Ruge and Stüben \cite{RugStu1987, Stu1983}. In contrast to geometric multigrid methods, AMG methods require no prior information about a grid hierarchy and can be applied to any linear system. Traditionally, AMG is used for symmetric (Hermitian) positive definite (SPD or HPD) linear systems. In this case, $A$ induces the $A$-norm (also called the energy norm) which is used as a convergence measure. 

The convergence in the $A$-norm is well understood (see e.g. \cite{McC1985,TroOosSch2001,Vas2008, MacOls2014,Not2015,XuZik2017}) and many result have been proven over time, including sharp bounds for the $A$-norm of the error propagation matrix \cite{FalVas2004, FalVasZik2005} and optimal transfer operators (interpolation and restriction) minimizing the $A$-norm or the spectral radius of the iteration matrix \cite{FalVasZik2005,BraCaoKahFalHu2018, GarNab2019, XuZik2017, AliBraKahKrzSchSou2025}. 

Different AMG methods have been developed for nonsymmetric problems as well \cite{BreManMcCRugSan2010, Lot2017, ManRugSou2018, ManSou2019, WieTumWalGee2014}, but compared to the HPD case there exist relatively few theoretical results about the convergence behavior and optimality of the transfer operators. One of the problem is that $A$ no longer induces a norm in which the error can be measured and it is unclear which norm to use. Additionally, the coarse-grid correction is no longer orthogonal.

For the nonsymmetric case, different choices for an appropriate norm have been investigated in the past, including the $\sqrt{A^HA}$-norm \cite{BreManMcCRugSan2010, ManOlsSchSou2017,ManSou2019}, the $QA$-norm (where $Q = VU^H$ and $U,V$ are the unitary matrices from the singular value decomposition (SVD) $A = U\Sigma V^H$) \cite{BreManMcCRugSan2010, ManSou2019} and the $M$-norm (where $M$ is an HPD smoother) \cite{Xu2022}. Other approaches use the spectral radius as measure of convergence see e.g. \cite{MenNab2008a, MenNab2008b}. Recent works include \cite{Not2010, Not2020, GarKehNab2020}.

In \cite{SouMan2024} Southworth and Manteuffel introduced so-called \textit{compatible} transfer operators, that means the restriction and prolongation operators are constructed such that the resulting coarse-grid correction has a norm equal to one in some appropriate inner product. This consideration is also observed in \cite{Xu2022}, where an HPD smoother $M$ is used to characterize the convergence in the $M$-norm of nonsymmetric AMG when $A$ is (nonsymmetric) positive definite.

The concept of an orthogonal coarse-grid correction is further developed in \cite{BatNab2025}. For an arbitrary HPD matrix $B \in \mathbb{C}^{n \times n}$, the authors define restriction and prolongation operators such that the coarse-grid correction is $B$-orthogonal. Then, a (sharp) inequality for the $B$-norm of the error propagation matrix of the nonsymmetric pre-smoothing two-grid method is established. 
Even more, optimal restriction and prolongation operators minimizing the $B$-norm of the error propagation matrix are established. Since the HPD matrix $B$ is in general arbitrary, special choices such as $B= A^HA$, $B = AA^H$, $B = M$ are considered in the theory given in \cite{BatNab2025}. 

In \cite{AliBraKahKrzSchSou2025} Ali et al. derived pseudo-optimal transfer operators with respect to the  spectral radius of the error propagation matrix of nonsymmetric AMG. However, norm-based convergence or optimality are not discussed. Additionally, these results do not cover the HPD case.

The results in \cite{AliBraKahKrzSchSou2025} are then generalized in \cite{KrzSouWimAliBraKah2025}. Among other results, it is proven in \cite{KrzSouWimAliBraKah2025} that if $M^{-1}A$ is diagonalizable, the left and right eigenvectors of the generalized eigenvalue problem $Ax = \lambda Mx$ generate optimal transfer operators for a certain $N$-norm of the pre- and post-smoothing nonsymmetric AMG using the same smoother. The HPD matrix $N$ is built with the help of eigenvector matrices. 

Hence $B$-orthogonal projections (for special HPD matrices $B$, as in \cite{AliBraKahKrzSchSou2025, KrzSouWimAliBraKah2025} or general HPD matrices $B$ as in \cite{SouMan2024,BatNab2025}) became a powerful tool to analyze nonsymmetric AMG. 

In \cite{NabRoo2026a} we presented a theoretical framework for nonsymmetric algebraic two-grid methods for arbitrary $B$-inner products and induced $B$-norms which naturally includes the HPD case. Here we continue our theoretical framework. Similarly to \cite{NabRoo2026a} we consider
two, slightly different, two-grid and multigrid error operators that consist of pre- and post-smoothing. The first operator is the natural generalization of the error operator in the HPD case. The second operator is simpler to apply and has been studied before. However, for the second error matrix an additional condition is needed for the smoother $M^{-1}A$. This is in our terminology the $B$-normality, a generalization of normality. We explain the differences and similarities of both operators in detail and show why the extra condition is needed. In \cite{NabRoo2026a} optimal interpolation and restriction operators are established for both methods. These operators are constructed by the eigenvectors of certain eigenvalue problems. Here we consider arbitrary interpolation and restriction operators that lead to $B$-orthogonal coarse-grid corrections. We give sharp estimates for the $B$-norm of the error propagation matrix for both two-grid methods. We also prove that the error norms decrease if we increase the size of the coarse space. Moreover, we extend the land-marked V-cycle bounds by McCormick to the nonsymmetric case.

The structure of the paper is given as follows. In \cref{sec:pre}, we list some preliminary results. In \cref{sec:two-grid conv} we continue the work of \cite{NabRoo2026a} by explaining how $B$-normal matrices can be used in the theory of nonsymmetric two-grid methods. We give new error bounds for such methods in \cref{subsec:conv} and analyze the effect of the number of coarse variables on the norm of the error propagation matrices in \cref{subsec:comp}. Finally, in \cref{sec:MG} we consider nonsymmetric V-cycle multilevel methods where we prove McCormick's V-cycle bound in the nonsymmetric case.

\section{Notation and preliminary results}\label{sec:pre}

We consider the linear algebraic system $Ax = b$ with nonsymmetric and indefinite system matrix $A$. If $A$ is Hermitian positive definite (HPD), usually the $A$-norm is used for any norm based estimate. Since $A$ is now generally nonsymmetric and indefinite, we want to consider a different inner product and norm in which we can symmetrize certain operators and matrices. For this purpose, we introduce the notion of a general $B$-inner product where $B \in \Cnn$ is now always assumed to be some HPD matrix. Here, we follow the notation and definitions from \cite{NabRoo2026a}.

The $B$-inner product and the induced $B$-(vector-)norm are defined as 
\begin{displaymath}
    \< x,y\>_B = y^H B x  \quad \text{and} \quad \lVert x \rVert_B = \sqrt{\<x,x\>_B}.
\end{displaymath}
Based on this, we define the $B$-(matrix-)norm as the operator norm of $\norm{\,\cdot\,}_B$. For a matrix $C \in \Cnn$ the $B$-(matrix-)norm is given by
\begin{displaymath}
    \norm{C}_B = \sup_{x\neq 0} \frac{\norm{Cx}_B}{\norm{x}_B} = \norm{B^\frac12CB^{-\frac12}}_2.
\end{displaymath}
Furthermore, we denote the range or image of the matrix $C$ by $\calR(C)$ and the kernel of $C$ by $\calN(C)$. Another key ingredient for any multigrid method are projections.

\begin{definition}
    A matrix $\Pi \in \Cnn$ is called projection if $\Pi^2 = \Pi$. 
\end{definition}

For us, the class of $B$-orthogonal projections is of particular interest.

\begin{definition} \label{def:B-orth proj}
  Let $\mathcal{U} \subseteq \mathbb{C}^n$. The unique operator $\Pi_{\mathcal{U}, \mathcal{U}^{\bot_B}}$ with $\Pi_{\mathcal{U}, \mathcal{U}^{\bot_B}}^2 = \Pi_{\mathcal{U}, \mathcal{U}^{\bot_B}}$ and
  $\mathcal{R}(\Pi_{\mathcal{U}, \mathcal{U}^{\bot_B}}) =\mathcal{\mathcal{U}}$ and $\mathcal{N}(\Pi_{\mathcal{U}, \mathcal{U}^{\bot_B}}) = \mathcal{\mathcal{U}^{\bot_B}}$ is called the $B$-orthogonal projection onto $\mathcal{U}$ along $\mathcal{U}^{\bot_B}$.
\end{definition}
Here, $\mathcal{U}^{\bot_B}$ denotes the $B$-orthogonal complement of $\mathcal U$. The definition immediately implies that a projection $\Pi$ is $B$-orthogonal if and only if
\begin{displaymath}
    \mathcal{R}(\Pi)^{\bot_B} = \mathcal{N}(\Pi) \quad \text{or equivalently} \quad \mathcal{R}(\Pi) \, \bot_B \, \mathcal{N}(\Pi).
\end{displaymath}

One can also prove the following lemma (see \cite[Lemma~1]{SouMan2024}).

\begin{lemma}\label{lem:B-ortho proj}
    A projection $\Pi$ is $B$-orthogonal if and only if 
    \begin{equation}\label{eq:B-ortho proj}
        \Pi = B^{-1} \Pi^H B.
    \end{equation}
\end{lemma}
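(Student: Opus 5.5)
The plan is to translate $B$-orthogonality of range and kernel into the identity $B\Pi = \Pi^H B$, which is equivalent to \eqref{eq:B-ortho proj} because $B$ is invertible. The key observation is that $\Pi = B^{-1}\Pi^H B$ says exactly that $\Pi$ is self-adjoint with respect to $\<\cdot,\cdot\>_B$. Indeed, for all $x,y$ we have $\<\Pi x, y\>_B = y^H B \Pi x$ and $\<x,\Pi y\>_B = y^H \Pi^H B x$. Hence $\<\Pi x,y\>_B = \<x,\Pi y\>_B$ for all $x,y$ if and only if $B\Pi = \Pi^H B$. It therefore suffices to show that a projection is $B$-orthogonal if and only if it is $B$-self-adjoint.

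For the direction ``$\Leftarrow$'', I assume $\Pi = B^{-1}\Pi^H B$, take $u \in \calR(\Pi)$ and $v \in \calN(\Pi)$, and write $u = \Pi u$, which holds since $\Pi^2 = \Pi$. Then $\<u,v\>_B = \<\Pi u, v\>_B = \<u, \Pi v\>_B = 0$. So $\calR(\Pi) \bot_B \calN(\Pi)$, which by the characterization following \cref{def:B-orth proj} means $\Pi$ is $B$-orthogonal.

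For the direction ``$\Rightarrow$'', I assume $\calR(\Pi) \bot_B \calN(\Pi)$. Any $x$ decomposes as $x = \Pi x + (I-\Pi)x$, where $(I-\Pi)x \in \calN(\Pi)$ because $\Pi^2=\Pi$. Applying this decomposition to both $x$ and $y$ and using orthogonality of the cross terms, I get
\[
\<\Pi x, y\>_B = \<\Pi x, \Pi y\>_B + \<\Pi x, (I-\Pi)y\>_B = \<\Pi x,\Pi y\>_B .
\]
Symmetrically, $\<x,\Pi y\>_B = \<\Pi x, \Pi y\>_B$. Hence $\<\Pi x,y\>_B = \<x,\Pi y\>_B$ for all $x,y$, which gives $B\Pi = \Pi^H B$ and therefore \eqref{eq:B-ortho proj}.

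There is no real obstacle here. The only point needing care is remembering that $\calN(\Pi) = \calR(I-\Pi)$ for a projection, which is what justifies the decomposition in the second direction.
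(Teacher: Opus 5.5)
Your proof is correct and complete. The paper gives no argument of its own here and only cites \cite[Lemma~1]{SouMan2024}, so there is no in-paper proof to match.

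Your route identifies $\Pi = B^{-1}\Pi^H B$ with $B$-self-adjointness and checks both directions via $x = \Pi x + (I-\Pi)x$. This is the standard argument. It can be compressed into one matrix identity: since $\mathcal{N}(\Pi) = \mathcal{R}(I-\Pi)$, the condition $\mathcal{R}(\Pi)\perp_B\mathcal{N}(\Pi)$ is equivalent to $\Pi^H B(I-\Pi)=0$, i.e.\ $\Pi^H B = \Pi^H B\Pi$. The right-hand side is Hermitian, so this forces $\Pi^H B = B\Pi$. Conversely, $B\Pi=\Pi^H B$ gives $\Pi^H B(I-\Pi) = B(\Pi-\Pi^2)=0$.

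A genuinely different route, in the spirit of the $B^{1/2}$-conjugation the paper uses elsewhere, is to pass to $\widetilde\Pi = B^{1/2}\Pi B^{-1/2}$. This is again a projection, with $\mathcal{R}(\widetilde\Pi)=B^{1/2}\mathcal{R}(\Pi)$ and $\mathcal{N}(\widetilde\Pi)=B^{1/2}\mathcal{N}(\Pi)$. Hence $\Pi$ is $B$-orthogonal exactly when $\widetilde\Pi$ is a Euclidean orthogonal projection. The classical fact that a projection is orthogonal iff it is Hermitian then gives $\widetilde\Pi=\widetilde\Pi^H$, which unwinds to $\Pi = B^{-1}\Pi^H B$. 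That route is shorter but outsources the content to the Euclidean case, whereas yours is self-contained.

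The only implicit step in your ``$\Leftarrow$'' direction is upgrading $\mathcal{R}(\Pi)\perp_B\mathcal{N}(\Pi)$ to $\mathcal{N}(\Pi)=\mathcal{R}(\Pi)^{\perp_B}$. This follows from the dimension count $\dim\mathcal{N}(\Pi) = n-\dim\mathcal{R}(\Pi) = \dim\mathcal{R}(\Pi)^{\perp_B}$, and the paper's remark after the definition of $B$-orthogonal projections already covers it.
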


The following result (cf. \cite[Lemma~3]{SouMan2024} and \cite[Lemma~3.6]{Vas2008}) gives a useful characterization for the $B$-norm of a projection and its connection to $B$-orthogonality.

\begin{lemma} \label{lem:norm proj}
Let $\Pi \in \mathbb{C}^{n \times n}$ be a projection with $\Pi \neq 0$ and $\Pi \neq I_n$. Then it holds $\norm{\Pi}_B = \norm{I-\Pi}_B$ and
\begin{displaymath}
     \lVert \Pi \rVert^2_B = 1 + \sup_{x\in \mathcal{R}(\Pi)^{\bot_B}} \frac{\lVert \Pi x \rVert^2_B}{\lVert x \rVert^2_B}
\end{displaymath}
with $ \lVert \Pi \rVert^2_B = 1 $ if and only if $ \Pi $ is $B$-orthogonal.
\end{lemma}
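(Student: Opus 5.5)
The plan is to reduce everything to the Euclidean case through the similarity $\widehat{\Pi} = B^{1/2}\Pi B^{-1/2}$. This matrix is again a projection, since $\widehat{\Pi}^2 = B^{1/2}\Pi^2 B^{-1/2} = \widehat{\Pi}$. Moreover $\norm{\Pi}_B = \norm{\widehat{\Pi}}_2$ and $\norm{I-\Pi}_B = \norm{I-\widehat{\Pi}}_2$. The map $x \mapsto B^{1/2}x$ is an isometry from $(\mathbb{C}^n,\norm{\cdot}_B)$ onto $(\mathbb{C}^n,\norm{\cdot}_2)$. It sends $\mathcal{R}(\Pi)$ to $\mathcal{R}(\widehat{\Pi})$ and $\mathcal{R}(\Pi)^{\bot_B}$ to $\mathcal{R}(\widehat{\Pi})^{\bot}$. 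Hence all three claims (equality of norms, the formula, and the orthogonality characterization) may be proved for $B = I$.

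For the formula with $B=I$, decompose any $x$ as $x = u + w$, where $u \in \mathcal{R}(\Pi)$ and $w \in \mathcal{R}(\Pi)^\perp$. Then $\Pi x = u + \Pi w$, and $\norm{x}^2 = \norm{u}^2 + \norm{w}^2$. Set
\[
  \gamma = \sup_{w \in \mathcal{R}(\Pi)^\perp,\, w \neq 0} \frac{\norm{\Pi w}}{\norm{w}}.
\]
By Cauchy--Schwarz,
\[
  \norm{\Pi x} \le \norm{u} + \gamma \norm{w} \le \sqrt{1+\gamma^2}\,\sqrt{\norm{u}^2+\norm{w}^2}.
\]
For the reverse inequality, take $w$ attaining $\gamma$ (possible by compactness) and set $u = \Pi w / \gamma$ when $\gamma > 0$. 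Equality then holds in both steps, because $u$ and $\Pi w$ are parallel and the ratio $\norm{u}/\norm{w}$ equals $1/\gamma$. If $\gamma = 0$, any nonzero $u \in \mathcal{R}(\Pi)$ gives the value $1$; this uses $\Pi \neq 0$. Together these give $\norm{\Pi}^2 = 1+\gamma^2$.

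The characterization follows directly. We have $\norm{\Pi} = 1$ if and only if $\gamma = 0$, that is, $\mathcal{R}(\Pi)^\perp \subseteq \mathcal{N}(\Pi)$. Comparing dimensions, $\dim \mathcal{N}(\Pi) = n - \operatorname{rank}\Pi = \dim \mathcal{R}(\Pi)^\perp$, so this inclusion is an equality. That equality is exactly orthogonality by Definition~\ref{def:B-orth proj}.

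The main obstacle is the equality $\norm{\Pi} = \norm{I-\Pi}$, which I would prove with a canonical block form. Choose an orthonormal basis adapted to $\mathcal{R}(\Pi) \oplus \mathcal{R}(\Pi)^\perp$. In this basis
\[
  \Pi = \m{I & X \\ 0 & 0}, \qquad I-\Pi = \m{0 & -X \\ 0 & I},
\]
where both blocks are nontrivial because $\Pi \neq 0, I$. Now $\norm{\Pi}^2$ is the largest eigenvalue of $\Pi\Pi^H = \operatorname{diag}(I + XX^H, 0)$, which equals $1 + \norm{X}^2$. Likewise $\norm{I-\Pi}^2$ is the largest eigenvalue of $(I-\Pi)^H(I-\Pi) = \operatorname{diag}(0, X^HX + I)$, which also equals $1+\norm{X}^2$. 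Here we use that $XX^H$ and $X^HX$ have the same nonzero eigenvalues, and that the zero blocks do not dominate. This block computation also re-derives the formula above with $\gamma = \norm{X}$, so as a cross-check I would present it first and read off all three statements from it.
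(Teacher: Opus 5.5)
The paper does not prove this lemma; it only cites Southworth--Manteuffel (Lemma~3) and Vassilevski (Lemma~3.6). Your argument is therefore a genuinely different route, in that it is a self-contained proof, and it is correct. The isometry $x\mapsto B^{1/2}x$ reduces everything to $B=I$; this is the same device the paper uses in the proof of \cref{prop:cor2.9 B-normal}. The block form $U^H\widehat\Pi U=\m{I & X\\ 0 & 0}$ then yields all three claims from one computation:
\begin{itemize}
\item $\norm{\Pi}_B^2=\norm{I-\Pi}_B^2=1+\norm{X}_2^2$.
\item The supremum over $\calR(\Pi)^{\bot_B}$ equals $\norm{X}_2^2$, because in adapted coordinates $w=(0,w_2)$ is mapped to $(Xw_2,0)$.
\item $\norm{\Pi}_B=1$ iff $X=0$ iff $\widehat\Pi$ is Hermitian, i.e.\ $\Pi=B^{-1}\Pi^HB$, which is \cref{lem:B-ortho proj}.
\end{itemize}
What this buys over the citation is transparency. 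The identity $\norm{\Pi}_B=\norm{I-\Pi}_B$ reduces to the fact that $XX^H$ and $X^HX$ share their nonzero eigenvalues. The roles of $\Pi\neq 0$ and $\Pi\neq I_n$ are also visible: both diagonal blocks must be nonempty.

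There is one concrete error in your direct (non-block) argument for the lower bound. With $u=\Pi w/\gamma$ you get $\norm{u}=\norm{\Pi w}/\gamma=\norm{w}$, not $\norm{w}/\gamma$. So the Cauchy--Schwarz step is not tight. In fact $\norm{\Pi x}^2/\norm{x}^2=(1+\gamma)^2/2$, which is strictly smaller than $1+\gamma^2$ unless $\gamma=1$. The correct choice is $u=\gamma^{-2}\Pi w$. Then $\norm{\Pi x}=(\gamma+\gamma^{-1})\norm{w}$ and $\norm{x}^2=(1+\gamma^{-2})\norm{w}^2$, which gives exactly $1+\gamma^2$. You intend to lead with the block computation, which proves the formula on its own, so the overall proof stands. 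But that sentence is false as written and should be corrected or dropped.
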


Based on this idea of $B$-orthogonality, we introduce the general concept of the adjoint with respect to the $B$-inner product. 

\begin{definition}\label{def:B-adjoint B-normal B-ortho}
    Let $A \in \Cnn$. The $B$-adjoint of $A$ is defined as $A^+ = B^{-1}A^H B$. Then we say that $A$ is $B$-normal if $AA^+ = A^+A$ and $A$ is $B$-orthogonal if $A^+=A$. 
\end{definition}

For the standard inner product, that is $B= I_n$, the $I_n$-adjoint reduces to the Hermitian transpose and hence an $I_n$-orthogonal matrix is Hermitian. If $A$ is HPD and we consider the $A$-inner product, the $A$-adjoint is given by $A$ itself. The definition of $B$-orthogonality in \cref{def:B-adjoint B-normal B-ortho} is consistent with the result of \cref{lem:B-ortho proj} because the right hand side of \cref{eq:B-ortho proj} is exactly the $B$-adjoint of $\Pi$. It is also easy to see that a $B$-orthogonal matrix $A$ is $B$-normal as well and satisfies $A^HB = BA $. 

The close relation of the $B$-adjoint to the Hermitian transpose also shows in its other properties.

\begin{proposition}
    Let $A,C \in \Cnn$, then it holds $(A+C)^+ = A^+ + C^+$, $(AC)^+ = C^+A^+$ and $(A^+)^+ = A$.
\end{proposition}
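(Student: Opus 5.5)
The plan is to verify all three identities by direct computation from \cref{def:B-adjoint B-normal B-ortho}, $A^+ = B^{-1}A^H B$, using only the standard rules for the Hermitian transpose, $(A+C)^H = A^H + C^H$, $(AC)^H = C^H A^H$ and $(A^H)^H = A$, together with $B^H = B$ because $B$ is HPD.

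\emph{Additivity.} Distributivity of matrix multiplication gives
\[
(A+C)^+ = B^{-1}(A^H + C^H)B = B^{-1}A^H B + B^{-1}C^H B = A^+ + C^+ .
\]

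\emph{Reversal of products.} Insert $I = BB^{-1}$ between the two transposed factors:
\[
(AC)^+ = B^{-1}C^H A^H B = B^{-1}C^H B\,B^{-1}A^H B = C^+ A^+ .
\]

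\emph{Involution.} First compute $(A^+)^H = (B^{-1}A^H B)^H = B^H A (B^{-1})^H$. Since $B$ is Hermitian, $(B^{-1})^H = (B^H)^{-1} = B^{-1}$, so $(A^+)^H = BAB^{-1}$. Then
\[
(A^+)^+ = B^{-1}(BAB^{-1})B = A .
\]

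This last step is the only place where the Hermitian property of $B$ is used; positive definiteness is needed only to guarantee that $B^{-1}$ exists. None of the three computations presents a real obstacle.
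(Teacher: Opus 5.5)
Your proposal is correct: the three identities follow by direct computation from $A^+ = B^{-1}A^H B$, and you rightly note that only the involution uses $B^H = B$. The paper states this proposition without proof, so your verification is the routine argument it implicitly relies on.
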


Similarly to classical normality there exist a lot of equivalent characterizations for $B$-normality. In \cite{ElsIkr1998} and \cite{GroJohSaWol1987} detailed lists with equivalent properties of $I_n$-normality can be found. Most of these properties can be generalized to $B$-normal matrices. We state a few selected characterizations that are relevant for this work.

\begin{definition}
    $A$ is called $B$-unitary if $A^H B A = I_n$, i.e. the columns of $A$ are orthonormal in the $B$-inner product.
\end{definition}

For a $B$-unitary matrix $A$ it holds $A^+A = B^{-1}$ and $A^{-1} = A^H B$ as well as $B = A^{-H}A^{-1}$. The last equation already shows that the structure of the matrix $B$ also depends on the underlying matrix $A$. This behavior is described in the fifth characterization for $B$-normality in the following result which is proven in \cite[Theorem~2.9]{NabRoo2026a}.   

\begin{theorem}\label{thm:B-normal_charact}
    The following statements are equivalent: 
    \begin{enumerate}[(1)]
        \item $A$ is $B$-normal.
        \item \label{thm:enum:Aplus p(A)} $A^+ = p(A)$ for some polynomial $p \in \C[z]$.
        \item \label{thm:enum:B-unitarily diag} $A$ is $B$-unitarily diagonalizable, that is there exists a $B$-unitary matrix $U \in \Cnn$ and a diagonal matrix $D\in \Cnn$ such that $A = U D U^{-1}$. 
        \item \label{thm:enum:eigenvector A Aplus} If $x$ is an eigenvector of $A$ to the eigenvalue $\lambda$, then $x$ is an eigenvector of $A^+$ to the eigenvalue $\lambdaquer$.
        \item \label{thm:enum:diag and decomp B} $A$ is diagonalizable with eigendecomposition $A = W\Lambda W^{-1}$ (without loss of generality we consider the eigenvalues and eigenvectors of $A$ ordered so that equal eigenvalues form a single diagonal block in $\Lambda$). Furthermore, using the eigenvector matrix $W$, the matrix $B^{-1}$ has a decomposition $B^{-1} = WDW^H$ where $D$ is an HPD block diagonal matrix with block sizes corresponding to those in $\Lambda$. There also exists a polynomial $p \in \C[z]$ such that $p(\Lambda) = \Lambda^H$. 
    \end{enumerate}
\end{theorem}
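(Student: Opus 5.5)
The key device is a change of variables: conjugating by $B^{1/2}$ turns $B$-notions into Euclidean ones. Set $\widehat{A} = B^{\frac12} A B^{-\frac12}$. Then
\[
\widehat{A}^H = B^{-\frac12} A^H B^{\frac12} = B^{\frac12} A^+ B^{-\frac12},
\]
so $A$ is $B$-normal if and only if $\widehat{A}$ is normal in the classical sense. Moreover, $U$ is $B$-unitary if and only if $B^{\frac12}U$ is unitary. With this dictionary each statement in \cref{thm:B-normal_charact} becomes a classical characterization of normality. I would therefore prove the cycle (1)$\Rightarrow$(3)$\Rightarrow$(4)$\Rightarrow$(1), then (1)$\Leftrightarrow$(2), and finally (3)$\Leftrightarrow$(5).

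\emph{(1)$\Rightarrow$(3).} If $A$ is $B$-normal, then $\widehat{A}$ is normal, so by the spectral theorem $\widehat{A} = Q D Q^H$ with $Q$ unitary. Put $U = B^{-\frac12}Q$. Then $U^H B U = Q^H Q = I_n$, so $U$ is $B$-unitary, and $A = B^{-\frac12}\widehat{A}B^{\frac12} = U D U^{-1}$.

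\emph{(3)$\Rightarrow$(4).} Since $U^{-1} = U^H B$, we get
\[
A^+ = B^{-1}U^{-H}D^H U^H B = U\overline{D}U^{-1}.
\]
Thus $A$ and $A^+$ are simultaneously diagonalized with conjugate eigenvalues. An eigenvector $x$ of $A$ for $\lambda$ lies in the span of the columns of $U$ belonging to $\lambda$, and on that span $A^+$ acts as $\overline{\lambda}$.

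\emph{(4)$\Rightarrow$(1).} This is the step I expect to be the main obstacle, because (4) does not assume diagonalizability. I would imitate the classical Schur-triangularization argument in the Euclidean picture. Statement (4) for $A$ says that every eigenvector $y$ of $\widehat{A}$ is an eigenvector of $\widehat{A}^H$ with eigenvalue $\overline{\lambda}$. Take a unit eigenvector $y$; then $y^\perp$ is $\widehat{A}$-invariant, because $\langle \widehat{A}z, y\rangle = \langle z, \widehat{A}^H y\rangle = \lambda\langle z,y\rangle = 0$ for $z \perp y$. The restriction of $\widehat{A}$ to $y^\perp$ inherits property (4), since its eigenvectors are eigenvectors of $\widehat{A}$ and $\widehat{A}^H$ also leaves $y^\perp$ invariant. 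Induction on the dimension then yields an orthonormal eigenbasis, so $\widehat{A}$ is unitarily diagonalizable and hence normal.

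\emph{(1)$\Leftrightarrow$(2).} If $A^+ = p(A)$, then $A^+$ commutes with $A$, which gives (1). Conversely, using (3), choose a polynomial $p$ that interpolates $p(\lambda_i) = \overline{\lambda_i}$ on the distinct eigenvalues. Then $p(A) = U p(D) U^{-1} = U\overline{D}U^{-1} = A^+$.

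\emph{(3)$\Leftrightarrow$(5).} Assume (3). Take $W = U$ and $D = I$. Then $B^{-1} = U U^H$ because $U^H B U = I_n$, and the interpolation polynomial above gives $p(\Lambda) = \Lambda^H$. Conversely, assume (5). Write each diagonal block as $D_i = R_i R_i^H$ using a Cholesky factorization, and set $U = W\,\mathrm{blockdiag}(R_i)$. Since $D$ is HPD, $U^H B U = I_n$. Because the blocks correspond to equal eigenvalues, $U$ still diagonalizes $A$ with the same $\Lambda$. Hence $A$ is $B$-unitarily diagonalizable, which is (3).

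The polynomial condition in (5) is in fact automatic, by interpolation on the diagonal entries of $\Lambda$. The genuine content of (5) is the block structure of $B^{-1}$, and this block structure is exactly what makes the reconstruction of a $B$-unitary eigenbasis in the last step work.
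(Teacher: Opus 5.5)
Your argument is correct and complete. This paper does not prove \cref{thm:B-normal_charact} itself; it imports the result from \cite[Theorem~2.9]{NabRoo2026a}, so there is no in-text proof to compare yours against. Your central device is to pass to $\widehat A = B^{\frac12}AB^{-\frac12}$, under which the $B$-adjoint, $B$-normality and $B$-unitarity become $\widehat A^H$, normality and unitarity. The paper uses the same reduction in the proof of \cref{prop:cor2.9 B-normal}. It also fits the paper's remark that the classical characterizations of normality in \cite{ElsIkr1998,GroJohSaWol1987} carry over to the $B$-setting. Your Schur-type induction for (4)$\Rightarrow$(1) correctly handles the fact that (4) does not presuppose diagonalizability. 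You are also right that the polynomial condition in (5) is automatic by interpolation.

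One small addition concerns (3)$\Rightarrow$(5). There you exhibit one eigenvector matrix, $W = U$, with $B^{-1} = WDW^H$. The paper, however, later applies (5) with a prescribed eigenvector matrix: the fixed $W$ in \eqref{eq:setB}, and $V$, $W$ in \cref{rem:B-normal and B-ortho}. Closing this takes one line. Any eigenvector matrix with the same grouping of eigenvalues has the form $W = UT$ with $T$ block diagonal and nonsingular. Hence $B^{-1} = UU^H = W\,(T^{-1}T^{-H})\,W^H$, and $T^{-1}T^{-H}$ is block diagonal HPD with the required block sizes.
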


The set $\mathcal{B}$ of all possible matrices $B$ such that $A$ can be $B$-normal can be characterized using equivalence  (\labelcref{thm:enum:diag and decomp B}) and is given by
\begin{align}\label{eq:setB}
    \mathcal{B} = \{ (WDW^H)^{-1} \mid \ &D \text{ is an HPD block diagonal matrix} \\ 
    &\text{with block sizes corresponding to those in } \Lambda \}. \nonumber
\end{align}

Using the characterizations from \cref{thm:B-normal_charact} we can conclude some useful properties of $B$-orthogonal matrices (see \cite{NabRoo2026a} for proofs).

\begin{proposition} \label{prop:eigB-ortho}
    Let $A \in \Cnn $ be $B$-orthogonal. Then all eigenvalues of $A$ are real.
\end{proposition}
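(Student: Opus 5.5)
The plan is to mimic the classical argument that a Hermitian matrix has real eigenvalues, with the Euclidean inner product replaced by the $B$-inner product. By \cref{def:B-adjoint B-normal B-ortho}, $B$-orthogonality means $A^+ = A$, that is $B^{-1}A^HB = A$, or equivalently $A^HB = BA$. This identity says precisely that $A$ is self-adjoint with respect to $\<\cdot,\cdot\>_B$: for all $x,y \in \C^n$,
\begin{displaymath}
    \<Ax,y\>_B = y^H B A x = y^H A^H B x = (Ay)^H B x = \<x,Ay\>_B .
\end{displaymath}

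Now take an eigenpair $Ax = \lambda x$ with $x \neq 0$. Applying the self-adjointness identity with $y = x$ gives $\lambda \norm{x}_B^2 = \<Ax,x\>_B = \<x,Ax\>_B = \lambdaquer \norm{x}_B^2$. The middle equality uses that $\<\cdot,\cdot\>_B$ is linear in the first argument and conjugate-linear in the second, which follows from the definition $\<x,y\>_B = y^HBx$. Since $B$ is HPD, $\norm{x}_B^2 = x^HBx > 0$, so we may divide and conclude $\lambda = \lambdaquer$, i.e.\ $\lambda \in \R$.

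As an alternative, or as a cross-check, we can use \cref{thm:B-normal_charact}. A $B$-orthogonal matrix is $B$-normal, so by item (\labelcref{thm:enum:eigenvector A Aplus}) any eigenvector $x$ of $A$ for $\lambda$ is an eigenvector of $A^+ = A$ for $\lambdaquer$. Hence $\lambda x = \lambdaquer x$ with $x \neq 0$, which again forces $\lambda = \lambdaquer$.

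A third route is to note that $B^{1/2} A B^{-1/2}$ is Hermitian whenever $A^HB = BA$, and it is similar to $A$. There is no real obstacle in any of these arguments; the only point needing care is the positivity $x^HBx > 0$, which is exactly where the assumption that $B$ is HPD enters.
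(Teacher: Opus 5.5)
Your proof is correct, and all three routes you sketch work. Your main argument is the classical Rayleigh-quotient computation carried out in the $B$-inner product. From $A^HB = BA$ you get $\<Ax,x\>_B = \<x,Ax\>_B$, hence $\lambda\norm{x}_B^2 = \lambdaquer\norm{x}_B^2$ with $\norm{x}_B^2 > 0$.

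The paper does not argue this way. It defers the proof to the companion paper and obtains the statement from the characterization \cref{thm:B-normal_charact}. A $B$-orthogonal matrix is $B$-normal, so by item (\labelcref{thm:enum:eigenvector A Aplus}) an eigenvector $x$ for $\lambda$ is an eigenvector of $A^+ = A$ for $\lambdaquer$. Equivalently, by item (\labelcref{thm:enum:B-unitarily diag}), $A = UDU^{-1}$ with $U$ $B$-unitary gives $A^+ = UD^HU^{-1}$, so $A^+ = A$ forces $D = D^H$. This is exactly your ``cross-check.''

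Your direct argument is self-contained: it uses only the identity $A^HB = BA$ and the positivity of $B$, without invoking \cref{thm:B-normal_charact}, which is a substantially stronger result. The paper's route fits its narrative of deriving properties of $B$-orthogonal matrices from $B$-normality. The diagonalization viewpoint also yields more structure: $A$ is $B$-unitarily diagonalizable with a real diagonal. That structure is what underlies the converse direction in \cref{prop:B-orthogonal_iff_eig_real}, and your Rayleigh-quotient argument alone does not give it.
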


\begin{proposition}\label{prop:B-orthogonal_iff_eig_real}
    Let $A \in \Cnn$ be $B$-normal. Then $A$ is $B$-orthogonal if and only if all eigenvalues of $A$ are real.
\end{proposition}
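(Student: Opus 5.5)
One direction follows directly from \cref{prop:eigB-ortho}: if $A$ is $B$-orthogonal, all its eigenvalues are real, and $B$-normality plays no role. The substance of the statement is the converse, and for it I will use the $B$-unitary diagonalization from \cref{thm:B-normal_charact}, characterization (\labelcref{thm:enum:B-unitarily diag}).

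So assume $A$ is $B$-normal and all eigenvalues of $A$ are real. By (\labelcref{thm:enum:B-unitarily diag}) I write $A = UDU^{-1}$ with $U$ $B$-unitary and $D$ diagonal; the diagonal entries of $D$ are the eigenvalues of $A$, hence real, so $D^H = D$. The key step is to compute the $B$-adjoint using the identities available for $B$-unitary matrices, namely $U^{-1} = U^H B$ and $B = U^{-H}U^{-1}$. Then
\begin{displaymath}
A^+ = B^{-1}A^H B = B^{-1} U^{-H} D^H U^H B = B^{-1}U^{-H} D U^H B .
\end{displaymath}
From $B = U^{-H}U^{-1}$ I get $B^{-1} = UU^H$, so $B^{-1}U^{-H} = U$. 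Together with $U^H B = U^{-1}$ this gives $A^+ = UDU^{-1} = A$, which means $A$ is $B$-orthogonal.

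As a cross-check, I could instead argue via (\labelcref{thm:enum:eigenvector A Aplus}). The eigenvectors of $A$ span $\C^n$, and for each such eigenvector $x$ with eigenvalue $\lambda$ one has $A^+x = \lambdaquer x = \lambda x = Ax$ because $\lambda$ is real. Hence $A^+ = A$ on a basis, so $A^+ = A$.

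I do not expect a real obstacle here. The only point that needs care is the inversion identities for $B$-unitary $U$: the relation $U^HBU = I$ forces $U$ to be square and invertible, and from it follow $U^{-1} = U^HB$ and $B^{-1} = UU^H$. Both are immediate from the definition.
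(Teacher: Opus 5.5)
Your proof is correct. The forward direction is exactly \cref{prop:eigB-ortho}, and the converse computation $A^+ = B^{-1}U^{-H}D^HU^HB = UD^HU^{-1} = UDU^{-1} = A$ via the $B$-unitary diagonalization is sound (a minor point: squareness of $U$ comes from the theorem's $U\in\Cnn$, not from $U^HBU=I$ alone). The paper gives no proof here and defers to \cite{NabRoo2026a}, saying only that these properties follow from the characterizations in \cref{thm:B-normal_charact}, which is the route you take.
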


\begin{proposition}\label{prop:B-norm lambda_max}
    Let $A \in \Cnn$. Then it holds
    \begin{displaymath}
        \norm{A}_B^2  = \norm{AA^+}_B = \norm{A^+A}_B = \norm{A^+}_B^2 = \lambda_{max}(A^+A) = \lambda_{max}(AA^+),
    \end{displaymath}
    where $\lambda_{max}$ denotes the largest eigenvalue.
\end{proposition}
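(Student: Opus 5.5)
The whole chain follows from transporting everything to the Euclidean setting via the similarity $X \mapsto B^{1/2} X B^{-1/2}$, and then using standard facts about the spectral norm.

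\emph{Step 1: translate to the Euclidean setting.} Set $\widehat{A} = B^{\frac12} A B^{-\frac12}$. By the definition of the $B$-norm, $\norm{A}_B = \norm{\widehat{A}}_2$. For the $B$-adjoint we compute
\begin{displaymath}
B^{\frac12} A^+ B^{-\frac12} = B^{\frac12} B^{-1} A^H B B^{-\frac12} = B^{-\frac12} A^H B^{\frac12} = \widehat{A}^H .
\end{displaymath}
Consequently
\begin{displaymath}
B^{\frac12} A^+A B^{-\frac12} = \widehat{A}^H\widehat{A}, \qquad B^{\frac12} AA^+ B^{-\frac12} = \widehat{A}\widehat{A}^H .
\end{displaymath}

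\emph{Step 2: read off the norm identities.} Using the Euclidean facts $\norm{\widehat{A}}_2^2 = \norm{\widehat{A}^H\widehat{A}}_2 = \norm{\widehat{A}\widehat{A}^H}_2 = \norm{\widehat{A}^H}_2^2$, we obtain
\begin{displaymath}
\norm{A}_B^2 = \norm{A^+A}_B = \norm{AA^+}_B = \norm{A^+}_B^2 .
\end{displaymath}
The last equality uses $\norm{A^+}_B = \norm{\widehat{A}^H}_2$.

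\emph{Step 3: identify the eigenvalues.} Since $A^+A$ is similar to the Hermitian positive semidefinite matrix $\widehat{A}^H\widehat{A}$, it has the same real nonnegative eigenvalues. Hence
\begin{displaymath}
\lambda_{max}(A^+A) = \lambda_{max}(\widehat{A}^H\widehat{A}) = \norm{\widehat{A}^H\widehat{A}}_2 .
\end{displaymath}
The same argument applies to $AA^+$. (Alternatively, $A^+A$ is $B$-orthogonal, so \cref{prop:eigB-ortho} already guarantees real eigenvalues.)

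There is no real obstacle here. The only point needing care is Step 3, where $\lambda_{max}$ must be meaningful. That is settled by the similarity to a Hermitian positive semidefinite matrix, which makes all eigenvalues of $A^+A$ and $AA^+$ real and nonnegative. The same similarity shows that $\widehat{A}^H\widehat{A}$ and $\widehat{A}\widehat{A}^H$ share their nonzero spectrum, which is consistent with the two $\lambda_{max}$ expressions agreeing.
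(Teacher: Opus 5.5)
Your argument is correct and is the standard one. The paper does not reproduce a proof here but refers to \cite{NabRoo2026a}; still, passing to the Euclidean setting via $X \mapsto B^{\frac12}XB^{-\frac12}$ is exactly how the paper writes $\norm{\cdot}_B$ in its definition, and it is the same device the paper uses in the proof of \cref{prop:cor2.9 B-normal}. One loose phrase: your closing remark that $\widehat{A}^H\widehat{A}$ and $\widehat{A}\widehat{A}^H$ share their nonzero eigenvalues follows from the general fact that $XY$ and $YX$ share nonzero eigenvalues, not from the similarity, but nothing depends on it since you identify both $\lambda_{max}$ values with $\norm{\widehat{A}}_2^2$ directly.
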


\begin{remark}
    Similar to $\lambda_{max}$ we define $\lambda_{min}$ as the smallest eigenvalue of a matrix. Both of these quantities are well-defined if the matrix only has real eigenvalues. But since $A^+A$ is $B$-orthogonal for any matrix $A \in \Cnn$, by \cref{prop:eigB-ortho} its eigenvalues are always real. In addition to $\lambda_{min}$, we define $\lambda_{min}^+$ to be smallest positive eigenvalue of a matrix.
\end{remark}

Another important property of $B$-normal matrices is their relation to the spectral radius $\rho$. 
\begin{proposition}\label{prop:B-normal_spectral_radius}
    Let $A \in \Cnn$ be $B$-normal. Then $\norm{A}_B = \rho(A)$. 
\end{proposition}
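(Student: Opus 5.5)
The plan is to reduce the claim to a statement about the eigenvalues of $A^+A$, using \cref{prop:B-norm lambda_max} together with the unitary diagonalization from \cref{thm:B-normal_charact}. By \cref{thm:B-normal_charact}(\labelcref{thm:enum:B-unitarily diag}), since $A$ is $B$-normal we can write $A = UDU^{-1}$ with $U$ $B$-unitary and $D = \diag(\lambda_1,\dots,\lambda_n)$ diagonal. The first step is to compute the $B$-adjoint in these coordinates. Because $U^HBU = I_n$, we have $U^{-1} = U^HB$ and $U^{-H} = BU$. Then
\begin{displaymath}
A^+ = B^{-1}A^HB = B^{-1}U^{-H}D^HU^HB = B^{-1}(BU)D^H U^{-1} = U\overline{D}U^{-1},
\end{displaymath}
where $\overline{D}=D^H$ is the entrywise conjugate of $D$.

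Next we form the product: $A^+A = U\overline{D}DU^{-1} = U\,\diag(\abs{\lambda_1}^2,\dots,\abs{\lambda_n}^2)\,U^{-1}$. Its eigenvalues are therefore exactly $\abs{\lambda_i}^2$, so $\lambda_{max}(A^+A) = \max_i \abs{\lambda_i}^2 = \rho(A)^2$. By \cref{prop:B-norm lambda_max} we have $\norm{A}_B^2 = \lambda_{max}(A^+A)$, and hence $\norm{A}_B = \rho(A)$, as claimed.

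An alternative route, which serves as a sanity check, works directly with the transformed matrix. Set $\widetilde{A} = B^{1/2}AB^{-1/2}$. The $B$-normality of $A$ is equivalent to ordinary normality of $\widetilde{A}$: indeed $B^{1/2}A^+B^{-1/2} = \widetilde{A}^H$, so $AA^+=A^+A$ becomes $\widetilde A\widetilde A^H=\widetilde A^H\widetilde A$. Then $\norm{A}_B = \norm{\widetilde{A}}_2 = \rho(\widetilde{A}) = \rho(A)$, using the classical fact that the $2$-norm of a normal matrix equals its spectral radius and that similarity preserves the spectrum.

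The only step requiring care is the identity $U^{-H} = BU$ used to simplify $A^+$. It follows directly from $U^HBU=I_n$ by inverting and taking adjoints, so there is no real obstacle here.
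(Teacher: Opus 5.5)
Your proof is correct. The identities $U^{-1}=U^HB$ and $U^{-H}=BU$ give $A^+=U\overline{D}U^{-1}$, hence $\lambda_{max}(A^+A)=\max_i\abs{\lambda_i}^2=\rho(A)^2$, and \cref{prop:B-norm lambda_max} then yields the claim. Your alternative through the normal matrix $B^{1/2}AB^{-1/2}$ is equally valid.

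The paper states this proposition without proof here and defers to \cite{NabRoo2026a}. Your argument uses the same tools the paper relies on: the $B$-unitary diagonalization of \cref{thm:B-normal_charact}, and the similarity $B^{1/2}(\cdot)B^{-1/2}$ that appears in the proof of \cref{prop:cor2.9 B-normal}. So this is essentially the intended approach.
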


\section{Algebraic two-grid convergence theory}\label{sec:two-grid conv}

At first, we consider algebraic two-grid methods for solving the linear system $Ax=b$. Such methods consist of two components, a smoothing step and the coarse-grid correction. Smoothing is usually a basic iterative method such as the Jacobi or Gauss-Seidel method. Coarse-grid correction is a projection that projects the system onto a smaller subspace of dimension $n_c <n$. Here, $n_c$ is the number of coarse variables. To pass from $n$ to $n_c$ variables and back, a restriction matrix $R \in \Cnnc$ and an interpolation matrix $P \in \Cnnc$ are required. The coarse-grid matrix is then given by
\begin{displaymath}
    A_c = R^HAP \in \Cncnc.
\end{displaymath}
We always assume $A_c$ to be nonsingular which means that $R$ and $P$ must have full rank. If $A$ is HPD, these two conditions are equivalent if we choose $R=P$. For general nonsymmetric matrices $A$, it is not sufficient for $R$ and $P$ to have full rank to guarantee that $A_c$ is nonsingular. We simply assume that this is the case. For handling singular coarse-grid systems see e.g. \cite{Not2016, KehNab2016}.

With the coarse-grid matrix $A_c$, we define the coarse-grid correction 
\begin{equation*}
    \Pi_A := \Pi_A(P,R) := P(R^HAP)^{-1}R^HA = PA_c^{-1}R^HA \in \Cnn
\end{equation*}
as a projection onto $\calR(P)$ along $\calN(R^HA)$. To denote the dependency on $R$ and $P$ we write $\Pi_A(P,R)$. For our analysis, the $B$-orthogonality of the coarse-grid correction is a key aspect. In many recent papers \cite{SouMan2024,AliBraKahKrzSchSou2025, KrzSouWimAliBraKah2025, BatNab2025, NabRoo2026a}, different choices of $P$ and $R$ are established such that the coarse gird correction is $B$-orthogonal. The following result summarizes several equivalent properties of a $B$-orthogonal coarse-grid correction (see \cite[Lemma~3.1]{NabRoo2026a}).  

\begin{lemma}\label{lem:equiv_cond Pi B-ortho}
    Let $R,P \in \Cnnc$ such that $R^HAP$ be nonsingular. Then $\Pi_A$ is a projection and the following statements are equivalent:
    \begin{enumerate}[(1)]
        \item $\Pi_A$ is $B$-orthogonal.
        \item $\Pi_A^+ = \Pi_A$.
        \item $\calR(\Pi)^{\bot_B} = \calN(\Pi_A)$ or $\calR(\Pi_A) \, \bot_B \, \calN(\Pi_A)$.
        \item $\norm{\Pi_A}_B = \norm{I-\Pi_A}_B = 1$.
        \item $\calR(BP) = \calR(A^HR)$.
        \item \label{lem:enum:Pi B-ortho equal ranges BatNab} $\calR(P) = \calR(B^{-1}A^HR)$.
        \item $\calN(R^HA) = \calN(P^HB)$. 
    \end{enumerate}
\end{lemma}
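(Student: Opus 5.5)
First I would check that $\Pi_A$ is a projection: since $A_c=R^HAP$ is nonsingular,
\[
\Pi_A^2 = PA_c^{-1}(R^HAP)A_c^{-1}R^HA = PA_c^{-1}R^HA = \Pi_A .
\]
Moreover $\calR(\Pi_A)=\calR(P)$ and $\calN(\Pi_A)=\calN(R^HA)$. For the range, $\Pi_A P = P$ gives $\calR(P)\subseteq\calR(\Pi_A)$, and the reverse inclusion is obvious. For the kernel, $\Pi_A x=0$ forces $A_c^{-1}R^HAx=0$ because $P$ has full column rank, and hence $R^HAx=0$. The rest of the proof is a chain of equivalences built on these two facts and on the earlier lemmas.

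The equivalences (1)$\Leftrightarrow$(2)$\Leftrightarrow$(3) follow directly from the definitions. (1)$\Leftrightarrow$(2) is \cref{lem:B-ortho proj}, because $\Pi_A^+=B^{-1}\Pi_A^HB$. (1)$\Leftrightarrow$(3) is the remark after \cref{def:B-orth proj}: a projection is $B$-orthogonal exactly when its range is $B$-orthogonal to its kernel. For (1)$\Leftrightarrow$(4), note that if $n_c<n$ then $\Pi_A\neq 0$ and $\Pi_A\neq I$, so \cref{lem:norm proj} gives $\norm{\Pi_A}_B=\norm{I-\Pi_A}_B$, and this common value equals $1$ if and only if $\Pi_A$ is $B$-orthogonal.

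Next I would connect (3) with the subspace conditions (5)--(7), using $\calR(\Pi_A)=\calR(P)$ and $\calN(\Pi_A)=\calN(R^HA)$. Since $\calR(P)^{\bot_B}=\{x: P^HBx=0\}=\calN(P^HB)$, statement (3) reads $\calN(R^HA)=\calN(P^HB)$, which is (7). Taking orthogonal complements in the Euclidean inner product, $\calN(X^H)^\perp=\calR(X)$, so (7) is equivalent to $\calR(A^HR)=\calR(BP)$, which is (5). Multiplying by the nonsingular $B^{-1}$ maps ranges bijectively, so (5) is equivalent to $\calR(P)=\calR(B^{-1}A^HR)$, which is (6).

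The one delicate step is the identification $\calN(\Pi_A)=\calN(R^HA)$, together with the dimension count that keeps the complement argument valid. Both $A^HR$ and $BP$ have rank $n_c$, because $A_c$ is nonsingular and hence $A^HR$ has full column rank. Consequently the kernels in (7) both have dimension $n-n_c$, and the equality of ranges in (5) can be checked by inclusion alone. Beyond this, everything is bookkeeping with the earlier lemmas.
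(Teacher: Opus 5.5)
Your proof is correct. The paper does not prove this lemma itself; it quotes it from \cite[Lemma~3.1]{NabRoo2026a}, so there is no in-paper argument to compare against. Your route is the standard one:
\begin{itemize}
\item identify $\calR(\Pi_A)=\calR(P)$ and $\calN(\Pi_A)=\calN(R^HA)$;
\item obtain (1)--(4) from \cref{lem:B-ortho proj}, \cref{lem:norm proj} and the remark after \cref{def:B-orth proj};
\item obtain (5)--(7) from $\calR(P)^{\bot_B}=\calN(P^HB)$, Euclidean orthogonal complements, and multiplication by $B^{-1}$.
\end{itemize}

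Two small remarks. First, the hypothesis $0<n_c<n$ that you invoke for (4) is genuinely necessary, not just convenient. For $n_c=n$ one has $\Pi_A=I$, so (1) holds but $\norm{I-\Pi_A}_B=0$. It is right to state this explicitly. Second, the dimension count in your last paragraph is not needed for (5)$\Leftrightarrow$(7), because $U\mapsto U^\perp$ is a bijection on subspaces of $\C^n$. It only lets you verify (5) by a single inclusion.
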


In \cite{BatNab2025} condition \eqref{lem:enum:Pi B-ortho equal ranges BatNab} is used to obtain a $B$-orthogonal coarse-grid correction. There, given a restriction $R \in \Cnnc$ of full rank, the corresponding interpolation is defined as $P_* = B^{-1}A^HR$. Conversely, given $P \in \Cnnc$, the corresponding restriction is defined as $R_* = A^{-H}BP$. 
For several specific choices of $B$ such $P_*$ and $R_*$ can be obtained easily. These choices include $B = A$ (HPD), $B = A^HA$, $B = M$ (HPD), and $B = QA$, where $Q$ results from an SVD of $A$, see \cite{BatNab2025}. 
Then, using $P_*$ or $R_*$ we obtain
\begin{equation}\label{eq:Pi_A with Pstar}
    \Pi_A(P_*,R) = \Pi_A(P,R_*) = P_*(P^H_* BP_*)^{-1}P_*^HB = \Pi_B(P_*,P_*).
\end{equation}
Choosing $P_*$ symmetrized the coarse-grid correction in the sense that $\Pi_B(P_*,P_*)$ has the same structure as $\Pi_A(P,P)$ in the HPD case. 
Note that $\calR(R)$ and $\calR(P)$ are independent of the choices of their respective bases. Therefore, operators $(P,R)$ such that $\Pi_A(P,R)$ is $B$-orthogonal are not unique.

Aside from the coarse-grid correction, a two-grid error propagation matrix consists of pre- and post-smoothing steps. In our framework that started in \cite{BatNab2025,NabRoo2026a} we consider the following two error propagation operators
\begin{align}
    \Eplus := \Eplus(P,R) &:= (I-(M^{-1}A)^+)^{\nu_2}(I-\Pi_A(P,R))(I-M^{-1}A)^{\nu_1}, \label{eq:Eplus}
    \\ \Esouth := \Esouth(P,R) &:= (I-M^{-1}A)^{\nu_2}(I-\Pi_A(P,R))(I-M^{-1}A)^{\nu_1}.\label{eq:Esouth}
\end{align}
Again, to denote the dependency of $P$ and $R$ we write $\Eplus(P,R)$ and $\Esouth(P,R)$, and omit $P$ and $R$ if their dependency is clear from the context. The exponents $\nu_1, \nu_2 \in \N_0$ represent the number of pre- and post-smoothing steps, respectively, and $M^{-1} \in \Cnn$ is the smoother which we always assume to be nonsingular. Naturally, the error matrices can be split up to separate the pre- and post-smoothing which yields
\begin{displaymath}
    \Eplus = E_+^{0,\nu_2} \cdot E_+^{\nu_1,0} \quad \text{and} \quad \Esouth = E^{0,\nu_2} \cdot E^{\nu_1,0}.
\end{displaymath}

The two error operators differ in the $B$-adjoint of $M^{-1}A$ in the post-smoothing step of $\Eplus$. This introduces the additional dependency of $B$ into the operator but it allows for stronger convergence results (cf. \cite{NabRoo2026a}). The operator $\Eplus$ is the natural generalization of the error operator in the HPD case, that is
\begin{equation}\label{eq:error_op HPD case}
    E_{\text{HPD}}^{\nu_1, \nu_2}(P,P) := (I-M^{-H}A)^{\nu_2}(I-P(P^HAP)^{-1}P^HA)(I-M^{-1}A)^{\nu_1}
\end{equation}
because for $A=B$ HPD the $A$-adjoint of $M^{-1}A$ is simply $M^{-H}A$. Together with the usual choice $R=P$, the operator $\Eplus$ reduces to $E_{\text{HPD}}^{\nu_1, \nu_2}$. The fact that $\Eplus$ generalizes the HPD error operator is also supported by the properties of $\Eplus$ which again reduce to known properties of $E_{\text{HPD}}^{\nu_1, \nu_2}$ if we consider $A=B$ to be HPD \cite{FalVas2004}.

The  next  Theorem  shall illustrate that  the theoretical analysis  of $E^{\nu,\nu}$ need  some more assumptions than that of $E_+^{\nu,\nu}$.
\begin{theorem} \label{thm:properties Eplus Esouth}
    Let $\Pi_A$ be $B$-orthogonal and $\nu \in \N_0$. Then it holds 
    \begin{enumerate}[1)] 
        \item $(E_+^{\nu,\nu})^+ = E_+^{\nu,\nu}$.
        \item \label{thm:enum:property Eplus only one smoothing adjoint} $(E_+^{\nu,0})^+ = E_+^{0,\nu}$ and $(E_+^{0,\nu})^+ = E_+^{\nu,0}$
        \item $B^{-1} (E_+^{\nu,0})^H = E_+^{0,\nu}B^{-1}$ and $B^{-1}(E_+^{0,\nu})^H = E_+^{\nu,0}B^{-1}$.
        \item \label{thm:enum:property norm Eplus split} $\norm{E_+^{\nu,\nu}}_B = \norm{E_+^{\nu,0}}_B^2 = \norm{E_+^{0,\nu}}_B^2$.
   \end{enumerate}
    If additionally $M^{-1}A$ is $B$-orthogonal, then the above properties also hold for $E^{\nu,\nu}$.
\end{theorem}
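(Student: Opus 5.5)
The whole argument rests on the algebraic rules for the $B$-adjoint from the Proposition after \cref{def:B-adjoint B-normal B-ortho}, namely $(XY)^+=Y^+X^+$, $(X+Y)^+=X^++Y^+$, $(X^+)^+=X$, $I^+=I$, together with $\Pi_A^+=\Pi_A$ from \cref{lem:equiv_cond Pi B-ortho}. Writing $S:=I-M^{-1}A$, additivity gives $S^+=I-(M^{-1}A)^+$. Hence $E_+^{\nu,0}=(I-\Pi_A)S^{\nu}$ and $E_+^{0,\nu}=(S^+)^{\nu}(I-\Pi_A)$. For item 2), I would apply the product rule repeatedly to get $(S^\nu)^+=(S^+)^\nu$ and $(I-\Pi_A)^+=I-\Pi_A$. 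This yields $(E_+^{\nu,0})^+=(S^+)^\nu(I-\Pi_A)=E_+^{0,\nu}$. The second identity then follows by taking the $B$-adjoint once more and using $(X^+)^+=X$.

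Item 1) follows by factoring $E_+^{\nu,\nu}=E_+^{0,\nu}E_+^{\nu,0}$, which holds since $(I-\Pi_A)^2=I-\Pi_A$. Then $(E_+^{\nu,\nu})^+=(E_+^{\nu,0})^+(E_+^{0,\nu})^+=E_+^{0,\nu}E_+^{\nu,0}=E_+^{\nu,\nu}$ by item 2).

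Item 3) is item 2) with the definition $X^+=B^{-1}X^HB$ unfolded. From $B^{-1}(E_+^{\nu,0})^HB=E_+^{0,\nu}$ I multiply on the right by $B^{-1}$, and the second identity is obtained the same way.

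For item 4), set $X=E_+^{\nu,0}$. By \cref{prop:B-norm lambda_max}, $\norm{X}_B^2=\norm{X^+X}_B$, and $X^+X=E_+^{0,\nu}E_+^{\nu,0}=E_+^{\nu,\nu}$ by items 1) and 2). So $\norm{E_+^{\nu,\nu}}_B=\norm{E_+^{\nu,0}}_B^2$. The same proposition gives $\norm{X}_B=\norm{X^+}_B=\norm{E_+^{0,\nu}}_B$, which supplies the last equality.

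For the final claim, suppose $M^{-1}A$ is $B$-orthogonal, i.e.\ $(M^{-1}A)^+=M^{-1}A$. Then $S^+=S$, so $E_+^{\nu_1,\nu_2}=E^{\nu_1,\nu_2}$ identically. All four properties therefore transfer verbatim to $E^{\nu,\nu}$, $E^{\nu,0}$ and $E^{0,\nu}$.

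I do not expect a real obstacle. The only place needing care is the index bookkeeping: the post-smoother of $\Eplus$ is $S^+$, not $S$, so the adjoint of the pre-smoothing half lands exactly on the post-smoothing half. This is also precisely why $E^{\nu,\nu}$ needs the extra $B$-orthogonality hypothesis.
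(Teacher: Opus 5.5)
Your proof is correct and follows essentially the same route as the paper. The paper defers this proof to \cite{NabRoo2026a} (Theorem~3.2), but runs the identical argument for the V-cycle analogue in \cref{lem:prop_Vcycle_error}: the $B$-adjoint calculus with $\Pi_A^+=\Pi_A$, the splitting $E_+^{\nu,\nu}=E_+^{0,\nu}E_+^{\nu,0}$ via $(I-\Pi_A)^2=I-\Pi_A$, and $\norm{X}_B^2=\norm{X^+X}_B=\norm{X^+}_B^2$ for item 4). One minor slip: in item 4), the identity $X^+X=E_+^{\nu,\nu}$ follows from item 2) together with that splitting, not from item 1).
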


The proof of this result can be found in \cite[Theorem~3.2]{NabRoo2026a}. If $M^{-1}A$ is $B$-orthogonal we have $(M^{-1}A)^+ = M^{-1}A$ and the error operators coincide. If $\nu_1 = 1$ and $\nu_2 = 0$ it holds $E_+^{1,0} = E^{1,0}$. This specific operator has been analyzed in \cite{AliBraKahKrzSchSou2025, BatNab2025, Xu2022, ManSou2019}. For general $\nu_1$ and $\nu_2$ the operator $\Eplus$ was recently studied in \cite{NabRoo2026a} while the operator $\Esouth$ was already investigated in \cite{Not2010, Not2016, KrzSouWimAliBraKah2025}. Additionally, for both error operators the optimal compatible transfer operator are known \cite{NabRoo2026a, BatNab2025, KrzSouWimAliBraKah2025, AliBraKahKrzSchSou2025}. Here, compatible means that the resulting coarse-grid correction $\Pi_A$ has a norm equal to one and optimal means that the resulting norm of the error operator is minimal. 

It is well-known that the smoother or symmetrized smoother are crucial in the analysis of HPD multigrid. The same holds in our framework. Therefore, we introduce the following matrices. 
\begin{align}
    \widetilde{M}^{-1} &= M^{-1}AB^{-1} + B^{-1}A^HM^{-H} - B^{-1}A^HM^{-H}BM^{-1}AB^{-1}, \label{eq:tildeMinv}
    \\ \widehat{M}^{-1} &= M^{-1}AB^{-1} + B^{-1}A^HM^{-H}- M^{-1}AB^{-1}A^HM^{-H}, \label{eq:hatMinv}
\end{align}
which satisfy
\begin{align}
    \widetilde M^{-1}B &= M^{-1}A + (M^{-1}A)^+ - (M^{-1}A)^+ M^{-1}A, \label{eq:tildeMinvB}
    \\ \hatMinvB &=  M^{-1}A + (M^{-1}A)^+ - M^{-1}A (M^{-1}A)^+ \label{eq:hatMinvB}
\end{align}
and thus
\begin{align*}
    I-\tildeMinvB &= (I-M^{-1}A)^+(I-M^{-1}A),
    \\ I- \hatMinvB &= (I-M^{-1}A)(I-M^{-1}A)^+.
\end{align*}
The matrices $\widetilde{M}^{-1}$ and $\widehat{M}^{-1}$ are Hermitian. Their structure might seem complicated but for $B=A$ HPD it reduces to the known symmetrized smoother matrices again. Then 
\begin{align*}
    \widetilde M^{-1} &= M^{-1} + M^{-H} - M^{-H}AM^{-1} = M^{-H}(M + M^H - A)M^{-1}, \\
    \widehat M^{-1} &= M^{-1} + M^{-H} - M^{-H}AM^{-1} = M^{-1}(M + M^H - A)M^{-H}.
\end{align*}
These matrices often appear in the AMG analysis of the HPD case (see \cite{XuZik2017, Vas2008, MacOls2014}). Moreover, they are used to determine optimal restriction and prolongation operators \cite{BraCaoKahFalHu2018, XuZik2017, GarNab2019}.

The matrices $\hatMinvB$ and $\tildeMinvB$ not only have a similar structure but also share certain properties as the following result proven in \cite[Proposition~3.3]{NabRoo2026a} shows.
\begin{proposition} \label{prop:properties tildeMinvB hatMinvB}
    Let $\widetilde{M}^{-1} $ and $\widehat{M}^{-1} $ be defined as in \cref{eq:tildeMinv} and \cref{eq:hatMinv}. Then $\tildeMinvB$ and $\hatMinvB$ are $B$-orthogonal and have real eigenvalues. 
    If $M^{-1}A$ is additionally $B$-normal it holds $\widetilde M^{-1} = \widehat M^{-1}$ and $\tildeMinvB = \hatMinvB$. 
\end{proposition}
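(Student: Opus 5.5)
We will prove the statement by direct computation with the $B$-adjoint calculus from the proposition on sums, products and double adjoints. Write $S := M^{-1}A$, so that by \cref{eq:tildeMinvB} and \cref{eq:hatMinvB}
\begin{displaymath}
    \tildeMinvB = S + S^+ - S^+S, \qquad \hatMinvB = S + S^+ - SS^+ .
\end{displaymath}

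\textbf{$B$-orthogonality.} We take $B$-adjoints term by term. Since $(S+S^+)^+ = S^+ + S$ and $(S^+S)^+ = S^+(S^+)^+ = S^+S$, we get $(\tildeMinvB)^+ = \tildeMinvB$. In the same way $(SS^+)^+ = SS^+$ gives $(\hatMinvB)^+ = \hatMinvB$. By \cref{def:B-adjoint B-normal B-ortho}, both matrices are therefore $B$-orthogonal.

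\textbf{Real eigenvalues.} This follows at once from \cref{prop:eigB-ortho}. As an independent check, $B$-orthogonality of $C$ means $BC = C^HB$. Hence $B^{1/2}CB^{-1/2}$ is Hermitian, and $C$ is similar to a Hermitian matrix. Alternatively, one can use that $\widetilde M^{-1}$ and $\widehat M^{-1}$ are Hermitian, so $\tildeMinvB = B^{-1/2}(B^{1/2}\widetilde M^{-1}B^{1/2})B^{1/2}$ is similar to a Hermitian matrix.

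\textbf{The $B$-normal case.} If $S$ is $B$-normal, then $S^+S = SS^+$ by definition, and the two displayed expressions coincide: $\tildeMinvB = \hatMinvB$. Multiplying on the right by the nonsingular matrix $B^{-1}$ gives $\widetilde M^{-1} = \widehat M^{-1}$. One can also see this directly from \cref{eq:tildeMinv} and \cref{eq:hatMinv}: the only differing terms are $B^{-1}A^HM^{-H}BM^{-1}AB^{-1} = S^+SB^{-1}$ and $M^{-1}AB^{-1}A^HM^{-H} = SS^+B^{-1}$, which are equal by $B$-normality.

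\textbf{Where care is needed.} The only real obstacle is bookkeeping: one must verify that $(S^+S)^+ = S^+S$, which uses the product rule $(AC)^+ = C^+A^+$ together with $(S^+)^+ = S$. One must also confirm that $B^{-1}A^HM^{-H}B = S^+$, which is the definition $S^+ = B^{-1}S^HB$ with $S^H = A^HM^{-H}$. Everything else is immediate.
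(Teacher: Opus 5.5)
Your proof is correct. It follows essentially the direct computation the paper relies on; the paper itself defers the proof to \cite[Proposition~3.3]{NabRoo2026a}. You obtain $B$-orthogonality of $S+S^+-S^+S$ and $S+S^+-SS^+$ (with $S=M^{-1}A$) from the adjoint rules, real eigenvalues from \cref{prop:eigB-ortho}, and the $B$-normal case from $S^+S=SS^+$ followed by right multiplication with $B^{-1}$, exactly as intended.
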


\Cref{thm:properties Eplus Esouth} shows that $B$-orthogonality of $M^{-1}A$ is required to obtain similar properties for $\Esouth$ and for $\Eplus$. In principal, if $M^{-1}A$ is not $B$-orthogonal, we do not obtain the same properties for $\Esouth$ and for $\Eplus$. 
However, we can   generalize \Cref{thm:properties Eplus Esouth} in the following way.

\begin{theorem} \label{thm:properties Esouth MinvA B-normal}
    Let $\Pi_A$ be $B$-orthogonal, $M^{-1}A$ be $B$-normal and $\nu \in \N$. Then it holds
    \begin{align*}
        \norm{E^{\nu,\nu}}_B &= \norm{E^{\nu,0}}_B^2 = \norm{E^{0,\nu}}_B^2  = \norm{E_+^{\nu,\nu}}_B = \norm{E_+^{\nu,0}}_B^2 = \norm{E_+^{0,\nu}}_B^2 
        \\ &= \lambda_{max}((I-\hatMinvB)^\nu(I-\Pi_A)).
    \end{align*}
\end{theorem}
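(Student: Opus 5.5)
Throughout write $S = I - M^{-1}A$ and $\Pi = \Pi_A$, so that $\Pi^+ = \Pi$ and $(I-\Pi)^2 = I-\Pi$. Since $M^{-1}A$ is $B$-normal, so is $S$, and $S^+ = I-(M^{-1}A)^+$ commutes with $S$. Also $I-\hatMinvB = SS^+ = S^+S = I-\tildeMinvB$ by \cref{prop:properties tildeMinvB hatMinvB}. The plan is to reduce every quantity to $\lambda_{max}(X)$ with $X = (SS^+)^\nu(I-\Pi)$, using \cref{prop:B-norm lambda_max}: $\norm{C}_B^2 = \lambda_{max}(C^+C) = \lambda_{max}(CC^+)$.

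First treat the one-sided factors. For $E^{\nu,0} = (I-\Pi)S^\nu$ we have $(E^{\nu,0})^+ = (S^+)^\nu(I-\Pi)$, hence
\[
\norm{E^{\nu,0}}_B^2 = \lambda_{max}\big(E^{\nu,0}(E^{\nu,0})^+\big) = \lambda_{max}\big((I-\Pi)(SS^+)^\nu(I-\Pi)\big).
\]
Since $(I-\Pi)$ is idempotent and $\lambda(YZ)=\lambda(ZY)$ as nonzero spectra, this equals $\lambda_{max}(X)$; we must note that $\lambda_{max}$ refers to the largest real eigenvalue, and the eigenvalues of $X$ are real, being nonnegative up to zeros. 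The computation for $E_+^{\nu,0} = E^{\nu,0}$ is the same. For $E^{0,\nu} = S^\nu(I-\Pi)$ we use $\lambda_{max}((E^{0,\nu})^+E^{0,\nu}) = \lambda_{max}((I-\Pi)(S^+)^\nu S^\nu(I-\Pi))$. Commutativity gives $(S^+)^\nu S^\nu = (SS^+)^\nu$, so this again equals $\lambda_{max}(X)$. Similarly $E_+^{0,\nu} = (S^+)^\nu(I-\Pi)$ gives $\lambda_{max}((I-\Pi)S^\nu(S^+)^\nu(I-\Pi)) = \lambda_{max}(X)$. This is the point where $B$-normality is essential: without commutativity, $S^\nu (S^+)^\nu$ and $(S^+)^\nu S^\nu$ need not agree, nor equal $(SS^+)^\nu$.

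Next come the two-sided operators. For $E_+^{\nu,\nu}$, \cref{thm:properties Eplus Esouth}\,\ref{thm:enum:property norm Eplus split} already gives $\norm{E_+^{\nu,\nu}}_B = \norm{E_+^{\nu,0}}_B^2$. For $E^{\nu,\nu} = S^\nu(I-\Pi)S^\nu$ that theorem does not apply, since $S$ need not be $B$-orthogonal, so we argue directly. Since $S$ is $B$-normal, \cref{thm:B-normal_charact} writes $S = UDU^{-1}$ with $U$ $B$-unitary. Then $S = VS^+$ for the $B$-unitary-like matrix $V = U\,\diag(d_i/\bar d_i)\,U^{-1}$, with the convention $1$ where $d_i = 0$. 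This $V$ commutes with $S$ and $S^+$ and is a $B$-isometry, i.e.\ $V^+V = I$. Hence
\[
E^{\nu,\nu} = S^\nu(I-\Pi)S^\nu = V^\nu (S^+)^\nu(I-\Pi)S^\nu = V^\nu E_+^{\nu,\nu},
\]
and therefore $\norm{E^{\nu,\nu}}_B = \norm{E_+^{\nu,\nu}}_B$. This phase-factor step is the main obstacle. The alternative is to compute $(E^{\nu,\nu})^+E^{\nu,\nu} = (S^+)^\nu(I-\Pi)(SS^+)^\nu(I-\Pi)S^\nu$ and compare its spectrum with $X^2$ via cyclic permutation. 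That also works: the nonzero spectrum is that of $(I-\Pi)(SS^+)^\nu(I-\Pi)(SS^+)^\nu(I-\Pi)$, which is $Y^2$ for $Y = (I-\Pi)(SS^+)^\nu(I-\Pi)$, and $Y$ is $B$-orthogonal and positive semidefinite. So $\lambda_{max} = \lambda_{max}(Y)^2$, giving $\norm{E^{\nu,\nu}}_B = \lambda_{max}(Y) = \lambda_{max}(X)$. I would present this second route, as it avoids constructing $V$.
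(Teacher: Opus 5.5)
Your proof is correct and, in the route you choose to present for $E^{\nu,\nu}$, essentially the paper's: both reduce every norm via $\norm{C}_B^2=\lambda_{max}(C^+C)=\lambda_{max}(CC^+)$, use the commutativity $SS^+=S^+S$ coming from $B$-normality together with cyclic permutation of factors, and invoke \cref{thm:properties Eplus Esouth} for $E_+^{\nu,\nu}$. The small differences are improvements rather than departures. You square the $B$-self-adjoint, $B$-semidefinite matrix $Y=(I-\Pi)(SS^+)^\nu(I-\Pi)$ instead of $X$, which justifies the step $\lambda_{max}(Z^2)=\lambda_{max}(Z)^2$ that the paper leaves implicit, and your alternative phase-factor identity $E^{\nu,\nu}=V^\nu E_+^{\nu,\nu}$, with $V$ a $B$-isometry commuting with $S$, is also a valid shortcut.
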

\begin{proof}
    Let $\Pi_A$ be $B$-orthogonal and $M^{-1}A$ be $B$-normal. By \cref{prop:properties tildeMinvB hatMinvB} we know that $\tildeMinvB = \hatMinvB$ and the $B$-normality of $M^{-1}A$ ensures that
    \begin{align*}
        ((I-M^{-1}A)^\nu)^+(I-M^{-1}A)^\nu &= (I-(M^{-1}A)^+)^\nu(I-M^{-1}A)^\nu 
        \\ &=\left( (I-(M^{-1}A)^+)(I-M^{-1}A)\right)^\nu 
        \\ &=(I-\tildeMinvB)^\nu = (I-\hatMinvB)^\nu
    \end{align*}
    holds, in particular for $\nu >1$. Furthermore, it is
    \begin{align*}
        \norm{E^{\nu,\nu}}_B^2 &= \lambda_{max}\left((E^{\nu,\nu})^+E^{\nu,\nu}\right)
        \\ &=\lambda_{max}\left(((I-M^{-1}A)^{\nu})^+ (I-\Pi_A)^+ ((I-M^{-1}A)^{\nu})^+ \right.
        \\ &\hspace*{2cm} \left.(I-M^{-1}A)^{\nu} (I-\Pi_A)(I-M^{-1}A)^{\nu} \right)
        \\ &= \lambda_{max} \left( (I-\hatMinvB)^\nu (I-\Pi_A) (I-\tildeMinvB)^\nu (I-\Pi_A) \right)
        \\ &= \lambda_{max} \left( ((I-\hatMinvB)^\nu (I-\Pi_A))^2 \right) 
        \\ &= \lambda_{max}^2 ((I-\hatMinvB)^\nu (I-\Pi_A) ). 
    \end{align*}
    Similarly, it holds
    \begin{align*}
        \norm{E^{\nu,0}}_B^2 &=\lambda_{max}\left(((I-M^{-1}A)^{\nu})^+ (I-\Pi_A)^+ (I-\Pi_A) (I-M^{-1}A)^{\nu} \right)
        \\ &= \lambda_{max} ( (I-\hatMinvB)^\nu (I-\Pi_A))
    \end{align*}
    and
    \begin{align*}
        \norm{E^{0,\nu}}_B^2 &=\lambda_{max}\left( (I-\Pi_A)^+ ((I-M^{-1}A)^{\nu})^+ (I-M^{-1}A)^{\nu} (I-\Pi_A) \right)
        \\ &= \lambda_{max} ( (I-\tildeMinvB)^\nu (I-\Pi_A)) = \lambda_{max} ( (I-\hatMinvB)^\nu (I-\Pi_A)).
    \end{align*}
    Finally, with \cref{thm:properties Eplus Esouth}~(\labelcref{thm:enum:property norm Eplus split}) we obtain
    \begin{align*}
        \norm{E_+^{\nu,\nu}}_B &= \norm{E_+^{0,\nu}}_B^2 = \norm{E_+^{\nu,0}}_B^2 
        \\&=\lambda_{max}\left(((I-M^{-1}A)^{\nu})^+ (I-\Pi_A)^+ (I-\Pi_A) (I-M^{-1}A)^{\nu} \right)
        \\ &= \lambda_{max} ( (I-\hatMinvB)^\nu (I-\Pi_A))
    \end{align*}
    which proves the result.
\end{proof}

The above result generalizes not only \Cref{thm:properties Eplus Esouth} but  also the well-known  result for HPD matrices $A$, see e.g.  \cite{FalVas2004}.

One of the necessary assumptions for convergence of AMG is the smoothing assumption. 
\begin{definition}
    We say that the smoothing assumption is satisfied if 
    \begin{displaymath}
        \norm{I-M^{-1}A}_B < 1.
    \end{displaymath}
\end{definition}
The smoothing assumption can be characterized with equivalent properties of the symmetrized smoother matrices $\hatMinvB$ and $\tildeMinvB$ and even further if we assume that $M^{-1}A$ is $B$-normal. Both of the following results are proven in \cite[Theorem~3.5,~Corollary~3.6]{NabRoo2026a}. 

\begin{theorem}\label{thm:smoothing_assumpt_BatNab}
Let $\widetilde M^{-1}$ and $\widehat M^{-1}$ be defined as in \cref{eq:tildeMinv} and \cref{eq:hatMinv}, respectively. Then the following statements are equivalent:
\begin{enumerate}[(1)]
    \item $\lVert I - M^{-1}A\rVert_B < 1$.
    \item $\widetilde{M}^{-1}$ is HPD.
    \item $\sigma(\widetilde{M}^{-1}B) \subseteq (0,1] $.
    \item $\widehat M^{-1}$ is HPD.
    \item $\sigma(\hatMinvB) \subseteq (0,1]$.
\end{enumerate}
\end{theorem}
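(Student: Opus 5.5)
The plan is to route everything through the $B$-adjoint identities $I-\tildeMinvB = (I-M^{-1}A)^+(I-M^{-1}A)$ and $I-\hatMinvB = (I-M^{-1}A)(I-M^{-1}A)^+$. I combine these with \cref{prop:B-norm lambda_max}, which gives $\norm{C}_B^2=\lambda_{max}(C^+C)=\lambda_{max}(CC^+)$. I also use the fact that $\tildeMinvB$ and $\hatMinvB$ are $B$-orthogonal with real spectra (\cref{prop:properties tildeMinvB hatMinvB}).

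\emph{(1)$\Leftrightarrow$(3).} Set $S=I-M^{-1}A$. The matrix $S^+S = I-\tildeMinvB$ is $B$-orthogonal, so it is $B$-unitarily diagonalizable with real eigenvalues. Moreover, $\<S^+Sx,x\>_B=\norm{Sx}_B^2\ge 0$, so all its eigenvalues lie in $[0,\infty)$. Hence $\sigma(\tildeMinvB)=1-\sigma(S^+S)\subseteq(-\infty,1]$. By \cref{prop:B-norm lambda_max}, $\norm{S}_B^2=\lambda_{max}(S^+S)=1-\lambda_{min}(\tildeMinvB)$. Therefore $\norm{S}_B<1$ holds iff $\lambda_{min}(\tildeMinvB)>0$, i.e.\ iff $\sigma(\tildeMinvB)\subseteq(0,1]$.

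\emph{(1)$\Leftrightarrow$(5).} The same argument works with $SS^+=I-\hatMinvB$, using $\lambda_{max}(SS^+)=\norm{S}_B^2$ and $\<SS^+x,x\>_B=\norm{S^+x}_B^2\ge 0$. Here I need $(SS^+)^+=SS^+$, which follows from the adjoint rules.

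\emph{(2)$\Leftrightarrow$(3)} and likewise \emph{(4)$\Leftrightarrow$(5).} Note that $\widetilde M^{-1}$ is Hermitian, as the paper states. The matrix $\widetilde M^{-1}B$ is similar to $B^{1/2}\widetilde M^{-1}B^{1/2}$, which is Hermitian and congruent to $\widetilde M^{-1}$. By Sylvester's law of inertia, $\widetilde M^{-1}$ is HPD iff every eigenvalue of $\widetilde M^{-1}B$ is positive. Combining this with the upper bound $\sigma(\tildeMinvB)\subseteq(-\infty,1]$, established unconditionally above, gives positive definiteness iff $\sigma(\tildeMinvB)\subseteq(0,1]$. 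The identical argument applies to $\widehat M^{-1}$.

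No step is hard. The only points needing care are the always-true upper bound $1$ on the spectra, which comes from the positive semidefiniteness of $S^+S$ and $SS^+$ in the $B$-inner product, and checking that $SS^+$ is $B$-orthogonal, so that its $\lambda_{max}$ equals its $B$-norm.
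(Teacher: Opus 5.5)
Your proof is correct and complete. The paper gives no proof of this theorem here; it is quoted from \cite[Theorem~3.5]{NabRoo2026a}, so the useful comparison is with the argument the paper does use for the same objects, in the proof of \cref{coro:V-cycle Eplus levelwise constant}. That argument is a direct quadratic-form identity: with $S=I-M^{-1}A$,
\[
\norm{x}_B^2-\norm{Sx}_B^2 = x^HB\widetilde M^{-1}Bx, \qquad \norm{x}_B^2-\norm{S^+x}_B^2 = x^HB\widehat M^{-1}Bx .
\]
Combined with $\norm{S^+}_B=\norm{S}_B$ and compactness of the unit sphere (needed to pass from $\norm{Sx}_B<\norm{x}_B$ for all $x\neq 0$ to $\norm{S}_B<1$), this gives (1)$\Leftrightarrow$(2) and (1)$\Leftrightarrow$(4) in one line each. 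It uses no eigenvalue theory and is the literal generalization of the HPD proof that $\norm{I-M^{-1}A}_A<1$ iff $M+M^H-A$ is HPD. Items (3) and (5) then follow from the similarity to $B^{1/2}\widetilde M^{-1}B^{1/2}$ and the Rayleigh quotient $x^HB\widetilde M^{-1}Bx/x^HBx = 1-\norm{Sx}_B^2/\norm{x}_B^2\le 1$.

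Your route makes (1) the hub through $\lambda_{max}(S^+S)=\lambda_{max}(SS^+)=\norm{S}_B^2$. It is essentially the spectral version of the same computation, since \cref{prop:B-norm lambda_max} is itself a Rayleigh-quotient statement. It does yield a quantitative by-product the direct route does not state: $\norm{I-M^{-1}A}_B^2 = 1-\lambda_{min}(\tildeMinvB) = 1-\lambda_{min}(\hatMinvB)$, so the two symmetrized smoothers always share their smallest eigenvalue.

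You also need less than you claim. The sign argument does not use $B$-orthogonality of $S^+S$ or $SS^+$: if $S^+Sx=\mu x$ with $x\neq 0$, then $\mu\norm{x}_B^2=\norm{Sx}_B^2$ already forces $\mu\in[0,\infty)$, and likewise for $SS^+$. And $\lambda_{max}(SS^+)=\norm{S}_B^2$ is supplied directly by \cref{prop:B-norm lambda_max}, so your closing concern about that step is unnecessary.
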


\begin{corollary}\label{cor:equiv_smoothing_assump}
    Let $\widetilde M^{-1}$ and $\widehat M^{-1}$ be defined as in \cref{eq:tildeMinv} and \cref{eq:hatMinv}, respectively. Let $M^{-1}A$ be $B$-normal. Then the following statements are equivalent:
    \begin{enumerate}[(1)]
        \item $\lVert I - M^{-1}A\rVert_B < 1$.
        \item $\widetilde{M}^{-1} = \widehat M^{-1}$ is HPD.
        \item $\sigma(\widetilde{M}^{-1}B) = \sigma(\hatMinvB) \subseteq (0,1] $.
        \item $\abs{\lambda - 1}^2 < 1$ for all eigenvalues $\lambda$ of $M^{-1}A$.
        \item $\rho(I-M^{-1}A) < 1$
    \end{enumerate}
\end{corollary}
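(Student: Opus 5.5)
The plan is to prove \cref{cor:equiv_smoothing_assump} by combining \cref{thm:smoothing_assumpt_BatNab} with the consequences of $B$-normality from \cref{thm:B-normal_charact}, \cref{prop:properties tildeMinvB hatMinvB} and \cref{prop:B-normal_spectral_radius}. \Cref{prop:properties tildeMinvB hatMinvB} gives $\widetilde M^{-1} = \widehat M^{-1}$, hence also $\tildeMinvB = \hatMinvB$, whenever $M^{-1}A$ is $B$-normal. With this identity, (2) and (3) are exactly items (2)/(4) and (3)/(5) of \cref{thm:smoothing_assumpt_BatNab}. So (1)$\Leftrightarrow$(2)$\Leftrightarrow$(3) follows at once from that theorem.

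For (1)$\Leftrightarrow$(5), I observe that $I-M^{-1}A$ is $B$-normal whenever $M^{-1}A$ is. Indeed, $(I-M^{-1}A)^+ = I-(M^{-1}A)^+$, and this commutes with $I-M^{-1}A$ because $M^{-1}A$ and $(M^{-1}A)^+$ commute. \Cref{prop:B-normal_spectral_radius} then gives $\norm{I-M^{-1}A}_B = \rho(I-M^{-1}A)$, which settles (1)$\Leftrightarrow$(5).

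For (4)$\Leftrightarrow$(5), the eigenvalues of $I-M^{-1}A$ are exactly $1-\lambda$ with $\lambda\in\sigma(M^{-1}A)$. So $\rho(I-M^{-1}A)<1$ holds iff $\abs{1-\lambda}<1$ for all such $\lambda$, i.e. iff $\abs{\lambda-1}^2<1$.

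As a consistency check linking (3) and (4) directly, I would use the $B$-unitary diagonalization $M^{-1}A = UDU^{-1}$ from \cref{thm:B-normal_charact}. Since $U^{-1}=U^HB$, we get $(M^{-1}A)^+ = B^{-1}U^{-H}D^HU^HB = U\overline{D}U^{-1}$. Then \cref{eq:tildeMinvB} gives $\tildeMinvB = U(D+\overline D - \overline D D)U^{-1}$, whose eigenvalues are $2\operatorname{Re}\lambda - \abs{\lambda}^2 = 1-\abs{1-\lambda}^2$. These lie in $(0,1]$ iff $\abs{1-\lambda}^2<1$, which confirms (3)$\Leftrightarrow$(4) independently.

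There is no real obstacle. The only point needing care is the verification that $B$-normality transfers from $M^{-1}A$ to $I-M^{-1}A$, and that rests on the additivity of the $B$-adjoint together with $I^+=I$.
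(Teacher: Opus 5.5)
Your proof is correct and follows the route the paper indicates. The paper does not prove the statement itself; it quotes it from \cite[Corollary~3.6]{NabRoo2026a} as a consequence of \cref{thm:smoothing_assumpt_BatNab}, with $\widetilde M^{-1}=\widehat M^{-1}$ supplied by \cref{prop:properties tildeMinvB hatMinvB}, and (1)$\Leftrightarrow$(5) following from the $B$-normality of $I-M^{-1}A$ together with \cref{prop:B-normal_spectral_radius}. Your extra eigenvalue computation giving $1-\abs{1-\lambda}^2$ is exactly the content of \cref{lem:charact_eigenvals_tildeMB}, so nothing is missing.
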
 

\Cref{cor:equiv_smoothing_assump} shows that the eigenvalues of $M^{-1}A$ and $\hatMinvB$ are related to each other if $M^{-1}A$ is $B$-normal.  This connection is formalized in the following result. It is proven in \cite[Lemma~3.10]{NabRoo2026a}. 

\begin{lemma}\label{lem:charact_eigenvals_tildeMB}
    Let $M^{-1}A$ be $B$-normal and $(\lambda,z)$ be an eigenpair of $M^{-1}A$. Then $(\mu,z)$ with 
    \begin{displaymath}
        \mu = \lambda + \overline{\lambda} - \overline{\lambda}\lambda = 1- \abs{\lambda-1}^2
    \end{displaymath}
    is an eigenpair of $\hatMinvB = \tildeMinvB$. Furthermore, $M^{-1}A$ and $\hatMinvB = \tildeMinvB$ are simultaneously $B$-unitarily diagonalizable.
\end{lemma}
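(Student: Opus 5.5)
The plan is to use the $B$-unitary diagonalization of $M^{-1}A$ from \cref{thm:B-normal_charact}~(\labelcref{thm:enum:B-unitarily diag}) and compute the symmetrized smoother in that eigenbasis.

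First, since $M^{-1}A$ is $B$-normal, write $M^{-1}A = UDU^{-1}$ with $U$ $B$-unitary and $D=\diag(\lambda_1,\dots,\lambda_n)$. Because $U^HBU=I_n$, we have $U^{-1}=U^HB$. Hence
\begin{displaymath}
    (M^{-1}A)^+ = B^{-1}(UDU^{-1})^HB = B^{-1}U^{-H}D^HU^HB = U D^H U^{-1},
\end{displaymath}
where the last step uses $B^{-1}U^{-H}=U$, which is equivalent to $U^HBU=I_n$. So for a column $z$ of $U$ with $M^{-1}Az=\lambda z$ we get $(M^{-1}A)^+z=\overline{\lambda}z$. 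This also follows directly from \cref{thm:B-normal_charact}~(\labelcref{thm:enum:eigenvector A Aplus}), but the explicit form is what gives simultaneity.

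Next, insert this into \cref{eq:hatMinvB}:
\begin{displaymath}
    \hatMinvB = U\bigl(D + D^H - DD^H\bigr)U^{-1}.
\end{displaymath}
The middle matrix is diagonal with entries $\lambda+\overline{\lambda}-\lambda\overline{\lambda}$. Expanding $\abs{\lambda-1}^2=\lambda\overline{\lambda}-\lambda-\overline{\lambda}+1$ identifies each entry as $1-\abs{\lambda-1}^2$. \Cref{prop:properties tildeMinvB hatMinvB} gives $\tildeMinvB=\hatMinvB$, and one can also check this directly, since diagonal matrices commute, so $D^HD=DD^H$ in \cref{eq:tildeMinvB}.

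Consequently every eigenpair $(\lambda,z)$ taken from a column of $U$ yields the eigenpair $(\mu,z)$ of $\hatMinvB$. The same $B$-unitary $U$ diagonalizes both matrices, which is the simultaneity claim.

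The only delicate point is when the statement is read for an arbitrary eigenvector $z$ of $M^{-1}A$, not necessarily a column of $U$. For a repeated eigenvalue, $z$ is a combination of columns of $U$ that share the same $\lambda$. Hence $(M^{-1}A)^+z=\overline{\lambda}z$ still holds, by the explicit formula above or by \cref{thm:B-normal_charact}~(\labelcref{thm:enum:eigenvector A Aplus}). Applying \cref{eq:hatMinvB} directly then gives
\begin{displaymath}
    \hatMinvB z = \bigl(\lambda+\overline{\lambda}-\lambda\overline{\lambda}\bigr)z = \mu z,
\end{displaymath}
with no basis needed. I expect no real obstacle beyond verifying the identity $(M^{-1}A)^+=UD^HU^{-1}$ carefully.
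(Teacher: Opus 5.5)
Your proof is correct and takes essentially the approach the paper relies on. The paper quotes this lemma from \cite[Lemma~3.10]{NabRoo2026a}, and in the proof of \cref{lem:smoothingstepreduction} it uses it in exactly your form: $M^{-1}A = V\Lambda V^{-1}$ with $V^HBV=I$ gives $(M^{-1}A)^+ = V\Lambda^HV^{-1}$ and hence $\hatMinvB = V(\Lambda+\Lambda^H-\Lambda\Lambda^H)V^{-1}$. Your extra step for eigenvectors that are not columns of $U$, via \cref{thm:B-normal_charact}~(\labelcref{thm:enum:eigenvector A Aplus}), correctly covers the eigenpair claim for an arbitrary eigenvector $z$.
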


In \cite{NabRoo2026a} convergence results for both error propagation matrices $\Eplus$ and $\Esouth$ are proven. There, it is shown that similar to the HPD case, the operator $\Eplus$ does not need any further assumptions for the smoothing operator $M^{-1}A$ to achieve a convergent method. The operator $\Esouth$ however requires the $B$-normality of $M^{-1}A$ to yield a similar result. In \cite{NabRoo2026a} this assumption  was thoroughly discussed and demonstrated that it can be seen as a generalization of the assumptions used in \cite{AliBraKahKrzSchSou2025, BatNab2025, KrzSouWimAliBraKah2025}. Examples for matrices $M^{-1}$ and $A$ that result in a $B$-normal matrix $M^{-1}A$ are also given in \cite{AliBraKahKrzSchSou2025,BatNab2025, KrzSouWimAliBraKah2025}. Note that the additional assumption of $B$-normality of $M^{-1}A$ has no counterpart in the HPD case.

\subsection{Convergence results}\label{subsec:conv}
In \cite{NabRoo2026a}  we have established optimal compatible transfer operators.  Here we extend  this convergence analysis by stating more characterizations of the norm of the error operators. For this we  need some auxiliary results. The first result is already known from \cite[Theorem~2.8]{BatNab2025} or \cite[Theorem~2.3]{GarKehNab2020}.

\begin{lemma}\label{lem:spec}
Let $C \in \mathbb{C}^{n \times n}$ be nonsingular and $\Pi \in \mathbb{C}^{n \times n}$, $\Pi \neq 0$, $\Pi \neq I_n$ be a projection with $\dim(\calN(\Pi)) = n_c <n$. Then it holds 
\begin{displaymath}
    \sigma (C \Pi) = \{ 0\} \cup \{ \lambda_{n_c+1}, \dots , \lambda_n\}
\end{displaymath}
if and only if 
\begin{displaymath}
    \sigma (C \Pi + (I - \Pi)) = \{ 1\} \cup \{ \lambda_{n_c+1}, \dots , \lambda_n\} .
\end{displaymath}
\end{lemma}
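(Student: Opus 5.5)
We prove the lemma by putting $\Pi$ in block-triangular form in a basis adapted to the decomposition $\C^n = \calR(\Pi) \oplus \calN(\Pi)$, and then comparing characteristic polynomials. Since $\Pi$ is a projection, $\calR(\Pi)$ and $\calN(\Pi)$ are complementary subspaces, with $\dim \calN(\Pi) = n_c$ and $\dim \calR(\Pi) = n-n_c$. Let $S = [S_1 \ S_2]$ be nonsingular, where the columns of $S_1 \in \C^{n\times (n-n_c)}$ span $\calR(\Pi)$ and the columns of $S_2 \in \C^{n \times n_c}$ span $\calN(\Pi)$. In this basis
\begin{displaymath}
    S^{-1}\Pi S = \m{I_{n-n_c} & 0 \\ 0 & 0}, \qquad S^{-1}(I-\Pi) S = \m{0 & 0 \\ 0 & I_{n_c}}.
\end{displaymath}
We partition $S^{-1} C S = \m{C_{11} & C_{12} \\ C_{21} & C_{22}}$ conformally.

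Next we compute both products in this basis:
\begin{displaymath}
    S^{-1} C\Pi S = \m{C_{11} & 0 \\ C_{21} & 0}, \qquad S^{-1}(C\Pi + I - \Pi)S = \m{C_{11} & 0 \\ C_{21} & I_{n_c}}.
\end{displaymath}
Both matrices are block lower triangular. Hence their characteristic polynomials are
\begin{displaymath}
    \chi_{C\Pi}(z) = z^{n_c}\,\chi_{C_{11}}(z), \qquad \chi_{C\Pi+I-\Pi}(z) = (z-1)^{n_c}\,\chi_{C_{11}}(z).
\end{displaymath}
Reading off the eigenvalues with multiplicity, $\sigma(C\Pi)$ consists of $0$ repeated $n_c$ times together with the eigenvalues of $C_{11}$. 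Likewise, $\sigma(C\Pi + (I-\Pi))$ consists of $1$ repeated $n_c$ times together with the same eigenvalues of $C_{11}$.

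Both spectral statements in the lemma therefore say exactly that $\sigma(C_{11}) = \{\lambda_{n_c+1},\dots,\lambda_n\}$, counted with multiplicity. This gives the equivalence. Nonsingularity of $C$ is not needed for this argument; it only ensures that $C_{11}$ has no forced zero eigenvalues in typical applications.

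The only delicate point is interpretation. The sets in the statement must be read as multisets, so that $\{0\}$ and $\{1\}$ stand for the $n_c$-fold eigenvalues. With that reading, each side is equivalent to the same statement about $\sigma(C_{11})$, and both directions follow at once.
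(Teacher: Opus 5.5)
Your proof is correct. The paper gives no proof of its own here: it quotes the lemma from \cite[Theorem~2.8]{BatNab2025} and \cite[Theorem~2.3]{GarKehNab2020}, so your argument serves as a self-contained substitute.

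In a basis adapted to $\C^n=\calR(\Pi)\oplus\calN(\Pi)$, both $C\Pi$ and $C\Pi+(I-\Pi)$ become block lower triangular with the same leading block $C_{11}$. Their characteristic polynomials are therefore $z^{n_c}\chi_{C_{11}}(z)$ and $(z-1)^{n_c}\chi_{C_{11}}(z)$. Cancelling the $n_c$ zeros, respectively ones, from the two multisets gives the equivalence.

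Compared with citing the result, your route has three advantages:
\begin{itemize}
\item It identifies the shared eigenvalues as those of $C_{11}$, the compression of $C$ to $\calR(\Pi)$ along $\calN(\Pi)$.
\item It shows that the hypotheses ``$C$ nonsingular'' and $\Pi\neq 0$, $\Pi\neq I_n$ are not needed.
\item It makes the multiplicities explicit.
\end{itemize}

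You are right that the multiset reading is essential, not just convenient. Take $\Pi=\diag(1,1,0)$ and $C=\m{0&0&1\\0&2&0\\1&0&0}$, which is nonsingular. Then $\sigma(C\Pi)=\{0,0,2\}$ and $\sigma(C\Pi+I-\Pi)=\{0,1,2\}$. With $\lambda_2=\lambda_3=2$, the first identity holds as an identity of sets, but the second fails.

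Your side remark on the role of nonsingularity is not accurate. In the same example $C$ is nonsingular but $C_{11}=\diag(0,2)$ is singular. Nonsingularity of $C$ only gives $\calN(C\Pi)=\calN(\Pi)$, i.e.\ the eigenvalue $0$ of $C\Pi$ has geometric multiplicity exactly $n_c$; its algebraic multiplicity can be larger. In the paper's applications, the fact that $\lambda_{n_c+1},\dots,\lambda_n$ are nonzero comes instead from $\widehat X^{-1}B$ (or $\widehat M^{-1}B$) being $B$-orthogonal with spectrum in $(0,1)$, as used in \cref{prop:cor2.9 B-normal}.
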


Another useful result was already proven in \cite[Corollary~2.9]{BatNab2025}.
\begin{lemma}\label{lem:cor2.9 BatNab}
    Let $C \in \Cnn$ be Hermitian, $\sigma(C) \subseteq (0,1)$ and $\Pi \in \Cnn$ be a Hermitian projection with $\Pi \neq 0$ and $\Pi \neq I_n$.
    Then it holds
    \begin{displaymath}
        \rho(I-C\Pi - (I-\Pi)) = \lambda_{max}(I-C\Pi- (I- \Pi)) = 1- \lambda_{min}^+(C\Pi)
    \end{displaymath}
    with $\lambda_{min}^+(C\Pi) \in (0,1)$. Additionally, it holds $\sigma(I-C\Pi -(I-\Pi)) \subseteq [0,1)$.
\end{lemma}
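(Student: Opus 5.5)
Set $T := I - C\Pi - (I-\Pi) = \Pi - C\Pi = (I-C)\Pi$. The plan is to understand the spectrum of $T$ through that of $C\Pi$, using \cref{lem:spec} as the bridge.

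\emph{Step 1: spectrum of $C\Pi$.} Since $C$ is Hermitian with $\sigma(C)\subseteq(0,1)$, it is HPD, so $C^{1/2}$ exists. The matrix $C\Pi$ has the same nonzero spectrum as $C^{1/2}\Pi C^{1/2}$ (use $\sigma(XY)\setminus\{0\}=\sigma(YX)\setminus\{0\}$ with $X = C^{1/2}$ and $Y = \Pi C^{1/2}$). The matrix $C^{1/2}\Pi C^{1/2}$ is Hermitian positive semidefinite of rank $r=\rank(\Pi)$. Because the Hermitian projection $\Pi$ satisfies $\Pi \neq 0, I_n$, we have $0<r<n$. Hence $\sigma(C\Pi)$ is real and nonnegative, and it consists of $0$ with multiplicity $n-r$ together with $r$ positive eigenvalues $\lambda_{n_c+1},\dots,\lambda_n$, where $n_c=\dim\calN(\Pi)=n-r$.

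These positive eigenvalues are less than $1$. They are the nonzero eigenvalues of $\Pi C\Pi$ restricted to $\calR(\Pi)$, and their Rayleigh quotients satisfy $x^H C x / x^H x \le \lambda_{max}(C) < 1$ for $x\in\calR(\Pi)$. Consequently $\lambda_{min}^+(C\Pi)\in(0,1)$, which is the stated side claim.

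\emph{Step 2: transfer to $T$.} Since $C$ is nonsingular, \cref{lem:spec} gives
\[
\sigma(C\Pi+(I-\Pi))=\{1\}\cup\{\lambda_{n_c+1},\dots,\lambda_n\}.
\]
Therefore
\[
\sigma(T)=\{0\}\cup\{1-\lambda_{n_c+1},\dots,1-\lambda_n\}.
\]
All of these values lie in $[0,1)$, because each $\lambda_j$ lies in $(0,1)$. This proves $\sigma(T)\subseteq[0,1)$.

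\emph{Step 3: conclusion.} All eigenvalues of $T$ are real and nonnegative, so $\rho(T)=\lambda_{max}(T)$. The largest of the values $1-\lambda_j$ is attained at the smallest positive $\lambda_j$. This gives
\[
\rho(T)=\lambda_{max}(T)=1-\lambda_{min}^+(C\Pi).
\]
If one prefers to avoid \cref{lem:spec}, Step 2 can be done directly: $T=(I-C)\Pi$ has the same nonzero spectrum as $\Pi(I-C)\Pi$, which is Hermitian and acts on $\calR(\Pi)$ as $I-\Pi C\Pi$.

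\emph{Main obstacle.} The delicate point is the multiplicity bookkeeping in Step 1. One must make sure that exactly $r$ eigenvalues of $C\Pi$ are positive, so that $0$ is not accidentally counted among the $\lambda_j$ when \cref{lem:spec} is applied. This follows from $\rank(C^{1/2}\Pi C^{1/2})=\rank(\Pi)$ together with the fact that $C\Pi$ is diagonalizable, being similar to the Hermitian matrix $C^{-1/2}(C\Pi)C^{1/2}=C^{1/2}\Pi C^{1/2}$.
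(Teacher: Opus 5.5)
Your proof is correct. The paper gives no proof of its own here: it quotes the result from \cite[Corollary~2.9]{BatNab2025}, right after the spectral correspondence that appears here as \cref{lem:spec}. Your route is therefore the natural one: pass from $\sigma(C\Pi)$ to $\sigma(C\Pi+(I-\Pi))$ via \cref{lem:spec}, then shift by $I$.

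Your explicit multiplicity count, via the similarity $C^{-1/2}(C\Pi)C^{1/2}=C^{1/2}\Pi C^{1/2}$, is exactly what is needed. It guarantees that none of the $\lambda_j$ fed into \cref{lem:spec} is zero; otherwise $T$ would pick up the eigenvalue $1$, and both $\sigma(T)\subseteq[0,1)$ and the identification with $\lambda_{min}^+(C\Pi)$ would fail.

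One small slip in Step 1: with $X=C^{1/2}$ and $Y=\Pi C^{1/2}$ you get $XY=C^{1/2}\Pi C^{1/2}$ but $YX=\Pi C$, not $C\Pi$. Take $Y=C^{1/2}\Pi$ instead, or rely on the similarity you state at the end, which gives the full spectrum with multiplicities anyway.

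Your alternative for Step 2, via the compression $\Pi(I-C)\Pi$ on $\calR(\Pi)$, is also valid. It makes the argument independent of \cref{lem:spec}.
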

Here, we generalize this result so that it fits into our framework of arbitrary $B$-inner products. Thus, we only assume that the matrix $C$ is $B$-normal rather than Hermitian.

\begin{proposition}\label{prop:cor2.9 B-normal}
    Let $C \in \Cnn$ be $B$-normal, $\sigma(C) \subseteq (0,1)$, $\Pi \in \Cnn$ be a $B$-orthogonal projection with $\Pi \neq 0$ and $\Pi \neq I_n$. Then it holds
    \begin{displaymath}
        \rho(I-C\Pi - (I-\Pi)) = \lambda_{max}(I-C\Pi- (I- \Pi)) = 1- \lambda_{min}^+(C\Pi)
    \end{displaymath}
    with $\lambda_{min}^+(C\Pi) \in (0,1)$. Additionally, it holds $\sigma(I-C\Pi -(I-\Pi)) \subseteq [0,1)$.
\end{proposition}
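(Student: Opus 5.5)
The plan is to reduce \cref{prop:cor2.9 B-normal} to the Hermitian case, \cref{lem:cor2.9 BatNab}, by a similarity transformation. A $B$-normal $C$ with real spectrum is $B$-orthogonal by \cref{prop:B-orthogonal_iff_eig_real}, so $C^+ = C$, i.e.\ $C^H B = BC$. Likewise $\Pi^+ = \Pi$ by \cref{lem:B-ortho proj}. Set $\widehat C = B^{1/2} C B^{-1/2}$ and $\widehat \Pi = B^{1/2}\Pi B^{-1/2}$. From $C^HB = BC$ we get
\begin{displaymath}
\widehat C^H = B^{-1/2} C^H B^{1/2} = B^{-1/2}(BCB^{-1})B^{1/2} = \widehat C,
\end{displaymath}
so $\widehat C$ is Hermitian. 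The same computation applied to $\Pi$ shows that $\widehat\Pi$ is Hermitian. It is also idempotent, hence an orthogonal projection, and it is neither $0$ nor $I_n$ because similarity preserves this. Since similarity preserves the spectrum, $\sigma(\widehat C) = \sigma(C) \subseteq (0,1)$.

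Next I note that
\begin{displaymath}
I - C\Pi - (I-\Pi) = B^{-1/2}\bigl(I - \widehat C\widehat\Pi - (I-\widehat\Pi)\bigr)B^{1/2}
\end{displaymath}
and $C\Pi = B^{-1/2}\widehat C\widehat\Pi B^{1/2}$. So the two matrices have the same spectra as their hatted counterparts. The spectral radius, $\lambda_{max}$ and $\lambda_{min}^+$ depend only on the spectrum, so every quantity in the claim is unchanged under the transformation. Applying \cref{lem:cor2.9 BatNab} to $\widehat C$ and $\widehat \Pi$ then gives
\begin{displaymath}
\rho = \lambda_{max} = 1-\lambda_{min}^+(\widehat C\widehat\Pi) = 1-\lambda_{min}^+(C\Pi),
\end{displaymath}
together with $\lambda_{min}^+\in(0,1)$ and $\sigma(\cdot)\subseteq[0,1)$.

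The only delicate point is the first step: the hypothesis is stated as $B$-normality, and I need the upgrade to $B$-orthogonality, which requires real eigenvalues. This is supplied exactly by $\sigma(C)\subseteq(0,1)$ together with \cref{prop:B-orthogonal_iff_eig_real}. Everything after that is routine bookkeeping showing that the spectral quantities are similarity invariant.
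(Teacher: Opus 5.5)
Your proposal is correct and follows essentially the same route as the paper: conjugate by $B^{1/2}$ to get a Hermitian matrix and a Hermitian projection with the same spectral data, then invoke \cref{lem:cor2.9 BatNab}. The only cosmetic difference is how you get Hermiticity of $B^{1/2}CB^{-1/2}$: you first upgrade $C$ to $B$-orthogonal via \cref{prop:B-orthogonal_iff_eig_real}, whereas the paper observes directly that $B^{1/2}CB^{-1/2}$ is normal with real spectrum and hence Hermitian.
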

\begin{proof}
    Let $C$ be $B$-normal and $\Pi$ be a $B$-orthogonal projection with $\Pi \neq 0$ and $\Pi \neq I_n$. Then $\widetilde C = B^\frac12 C B^{-\frac12}$ is normal with $\sigma(\widetilde C) = \sigma(C) \subseteq (0,1)$. Thus, $\widetilde C$ is Hermitian. Similarly, the matrix $\widetilde \Pi = B^\frac12 \Pi B^{-\frac12}$ is a projection and Hermitian due to the $B$-orthogonality of $\Pi$. It holds $\sigma(\widetilde C \widetilde \Pi) = \sigma(C\Pi)$ and $\sigma(\widetilde \Pi) = \sigma(\Pi)$ and hence
    \begin{equation*}
         \sigma(I-C\Pi - (I-\Pi)) = \sigma(I-\widetilde C \widetilde \Pi - (I-\widetilde \Pi)).
    \end{equation*}
    Since $C$ is Hermitian, we can apply \cref{lem:cor2.9 BatNab} and obtain
    \begin{align*}
        \rho(I-C\Pi - (I-\Pi)) &= \rho( I-\widetilde C \widetilde \Pi - (I-\widetilde \Pi)) = 1 - \lambda_{min}^+(\widetilde C \widetilde \Pi) = 1 -\lambda_{min}^+(C\Pi)
        \\ & = \lambda_{max}(I-\widetilde C \widetilde \Pi - (I-\widetilde \Pi)) = \lambda_{max}(I-C\Pi - (I-\Pi))
    \end{align*}
    with $\lambda_{min}^+(C\Pi) = \lambda_{min}^+(\widetilde C \widetilde \Pi) \in (0,1)$. Furthermore, \cref{lem:cor2.9 BatNab} also implies that $\sigma(I-C\Pi -(I-\Pi)) \subseteq [0,1)$.
\end{proof}

This result will be important later in the characterization of the $B$-norm of the error propagation matrices. Another useful result is the reduction of an arbitrary number of smoothing steps into one smoothing step.

\begin{lemma}\label{lem:smoothingstepreduction}
    Let $M^{-1}A$ be $B$-normal, $I-M^{-1}A$ be nonsingular and the smoothing assumption be satisfied. Then for any $\nu \in \N$ the matrix $I-\hatMinvB$ is nonsingular and we have
    \begin{equation*}
        (I-M^{-1}A)^{\nu}(I-(M^{-1}A)^+)^{\nu} = (I-\hatMinvB)^{\nu} = I- \widehat X^{-1}B,
    \end{equation*}
    where $\widehat X^{-1}B$ is $B$-orthogonal, $\sigma(\widehat X^{-1}B) \subseteq (0,1)$ and $\widehat X^{-1}$ HPD. Moreover, there exists a nonsingular matrix $X^{-1}$ such that
    \begin{equation*}
        (I-M^{-1}A)^{\nu} = I-X^{-1}A,
    \end{equation*}
    where $X^{-1}A$ is $B$-normal and 
    \begin{equation} \label{eq:Xhat}
        \widehat X^{-1} = X^{-1}AB^{-1} - B^{-1} (X^{-1}A)^H - X^{-1}AB^{-1}(X^{-1}A)^H.
    \end{equation}
\end{lemma}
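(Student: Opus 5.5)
The plan is to build both $X^{-1}$ and $\widehat X^{-1}$ explicitly as polynomials in the smoothing matrices. All claimed properties then follow from $B$-normality, via \cref{thm:B-normal_charact} and \cref{lem:charact_eigenvals_tildeMB}. Since $A$ is nonsingular, I define
\begin{displaymath}
X^{-1} := \bigl(I-(I-M^{-1}A)^{\nu}\bigr)A^{-1}, \qquad \text{so that} \quad X^{-1}A = I-(I-M^{-1}A)^\nu .
\end{displaymath}
$X^{-1}A$ is a polynomial in $M^{-1}A$. By \cref{thm:B-normal_charact}~(\labelcref{thm:enum:Aplus p(A)}), $(M^{-1}A)^+=p(M^{-1}A)$, so $(X^{-1}A)^+$ is also a polynomial in $M^{-1}A$. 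Hence it commutes with $X^{-1}A$, and $X^{-1}A$ is $B$-normal.

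Nonsingularity of $X^{-1}$ comes from the eigenvalues. By \cref{cor:equiv_smoothing_assump}, every eigenvalue $\lambda$ of $M^{-1}A$ satisfies $|1-\lambda|<1$. The eigenvalues of $X^{-1}A$ are $1-(1-\lambda)^\nu$, which are therefore nonzero. So $X^{-1}A$, and with it $X^{-1}$, is nonsingular.

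Next I treat the first identity. By $B$-normality, $I-M^{-1}A$ and $(I-M^{-1}A)^+=I-(M^{-1}A)^+$ commute. Regrouping the product then gives
\begin{displaymath}
(I-M^{-1}A)^{\nu}(I-(M^{-1}A)^+)^{\nu}=\bigl((I-M^{-1}A)(I-M^{-1}A)^+\bigr)^\nu=(I-\hatMinvB)^\nu .
\end{displaymath}
For the spectrum I use \cref{lem:charact_eigenvals_tildeMB}: the eigenvalues of $\hatMinvB$ are $\mu=1-|\lambda-1|^2$. The smoothing assumption gives $\mu\in(0,1]$. Nonsingularity of $I-M^{-1}A$ gives $|\lambda-1|>0$, hence $\mu<1$. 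So $\sigma(I-\hatMinvB)\subseteq(0,1)$, and $I-\hatMinvB$ is nonsingular.

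I then set $\widehat X^{-1}:=\bigl(I-(I-\hatMinvB)^\nu\bigr)B^{-1}$. Its spectrum satisfies $\sigma(\widehat X^{-1}B)=\{1-(1-\mu)^\nu\}\subseteq(0,1)$. $\widehat X^{-1}B$ is a real-coefficient polynomial in the $B$-orthogonal matrix $\hatMinvB$ (\cref{prop:properties tildeMinvB hatMinvB}), so it is $B$-orthogonal. This means $B^{-1}(\widehat X^{-1}B)^H B=\widehat X^{-1}B$, i.e.\ $\widehat X^{-H}=\widehat X^{-1}$, so $\widehat X^{-1}$ is Hermitian. Moreover $B^{1/2}\widehat X^{-1}B^{1/2}$ is similar to $\widehat X^{-1}B$, so its spectrum is positive and $\widehat X^{-1}$ is HPD.

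Finally, for \cref{eq:Xhat}, I apply the algebraic identity behind \cref{eq:hatMinvB} with $M^{-1}A$ replaced by $X^{-1}A$. Expanding gives
\begin{displaymath}
(I-X^{-1}A)(I-X^{-1}A)^+ = I - \bigl(X^{-1}AB^{-1}+B^{-1}(X^{-1}A)^H - X^{-1}AB^{-1}(X^{-1}A)^H\bigr)B .
\end{displaymath}
The left side equals $(I-M^{-1}A)^\nu((I-M^{-1}A)^\nu)^+=(I-\hatMinvB)^\nu=I-\widehat X^{-1}B$. Since $B$ is nonsingular, $\widehat X^{-1}$ equals the bracket.

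I expect this last step to be the main obstacle, because the bracket must be checked exactly against \cref{eq:Xhat}. My expansion, which follows the pattern of \cref{eq:hatMinv}, produces a plus sign in front of the middle term $B^{-1}(X^{-1}A)^H$. The displayed formula has a minus sign there, which I suspect is a typo. I would verify the sign carefully by reducing to $\nu=1$, where the formula must coincide with \cref{eq:hatMinv}.
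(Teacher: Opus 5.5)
Your proof is correct and follows essentially the paper's route: write $(I-M^{-1}A)^\nu = I-X^{-1}A$ with $X^{-1}A$ a real polynomial in $M^{-1}A$, use $B$-normality to regroup the product into $(I-\hatMinvB)^\nu$, and read off the spectra from the eigenvalues $1-\abs{1-\lambda}^2$ of \cref{lem:charact_eigenvals_tildeMB}. The one real difference is that the paper gets HPD-ness of $\widehat X^{-1}$ by applying \cref{thm:smoothing_assumpt_BatNab} to the smoother $X^{-1}$ via $\norm{I-X^{-1}A}_B \leq \norm{I-M^{-1}A}_B^\nu < 1$, whereas you prove it directly from $B$-orthogonality of a real polynomial in $\hatMinvB$ together with positivity of its spectrum. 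Your sign suspicion is justified: the middle term in \cref{eq:Xhat} must be $+B^{-1}(X^{-1}A)^H$. This is forced by the case $\nu=1$, which must reproduce \cref{eq:hatMinv}, and by the paper's own expression $\widehat X^{-1}B = X^{-1}A+(X^{-1}A)^+-X^{-1}A(X^{-1}A)^+$. The minus sign in the statement, repeated in the paper's proof, is therefore a typo, and your corrected formula is the right one.
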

\begin{proof}
    Let $M^{-1}A$ be $B$-normal, $I-M^{-1}A$ be nonsingular and the smoothing assumption be satisfied. Then $M^{-1}A$ is $B$-unitarily diagonalizable, that is $M^{-1}A = V\Lambda V^{-1}$ with $V^HBV = I$ and diagonal matrix $\Lambda$ containing the eigenvalues $\lambda_i$. By \cref{lem:charact_eigenvals_tildeMB} we know that $\hatMinvB$ is $B$-orthogonal with $\hatMinvB = V(\Lambda + \Lambda^H - \Lambda\Lambda^H)V^{-1}$ and 
    \begin{equation*}
        I-\hatMinvB = V(I-\Lambda)(I-\Lambda)^HV^{-1} =: VDV^{-1}
    \end{equation*}
    where $D$ is a diagonal matrix with entries $\abs{1-\lambda_i}^2 \in (0,1)$. Therefore, it holds $\sigma(\hatMinvB) \subseteq (0,1)$. Additionally, we have
    \begin{equation*}
        (I-M^{-1}A)^{\nu} = I-X^{-1}A
    \end{equation*}
    for $X^{-1}A = p(M^{-1}A)$ with some real polynomial $p \in \R[z]$ of degree $\nu$. From $B^{-1} = VV^H$ we obtain 
    \begin{equation*}
        X^{-1}A = V(I-(I-\Lambda)^{\nu})V^{-1} 
    \end{equation*}
    and thus the eigenvalues of $X^{-1}A$ are given by $\gamma_i = 1- (1-\lambda_i)^{\nu}$ with 
     \begin{displaymath}
         \abs{1-\gamma_i} = \abs{1-\lambda_i}^\nu < 1.
     \end{displaymath}
     The invertibility of $I-M^{-1}A$ implies that $I-X^{-1}A$ is also nonsingular. Since $p$ has real coefficients we get $(X^{-1}A)^+ = p((M^{-1}A)^+) $. Then we have
    \begin{align*}
        (I-\hatMinvB)^{\nu} &= (I-M^{-1}A)^{\nu}(I-(M^{-1}A)^+)^{\nu} 
        \\&= (I-X^{-1}A)(I-(X^{-1}A)^+) = I-\widehat X^{-1}B 
    \end{align*}
    for 
    \begin{equation*}
        \widehat X^{-1}B = X^{-1}A + (X^{-1}A)^+ - X^{-1}A(X^{-1}A)^+ 
    \end{equation*}
    and 
    \begin{equation*}
        \widehat X^{-1} = p(M^{-1}A)B^{-1} - B^{-1} p(M^{-1}A)^H - p(M^{-1}A)B^{-1}p(M^{-1}A)^H.
    \end{equation*}
    Since $X^{-1}A$ is $B$-unitarily diagonalizable, it is $B$-normal (see \cref{thm:B-normal_charact}~(\labelcref{thm:enum:B-unitarily diag})). The smoothing assumption then implies 
    \begin{displaymath}
        \norm{I-X^{-1}A}_B = \norm{(I-M^{-1}A)^{\nu}}_B \leq \norm{I-M^{-1}A}^{\nu}_B <1
    \end{displaymath}
    and it follows that $\widehat X^{-1}$ is HPD and $\sigma(\widehat X^{-1}B) \subseteq (0,1]$. Since $I-X^{-1}A$ is nonsingular, so is $ I-\widehat X^{-1}B$ and thus $\sigma(\widehat X^{-1}B) \subseteq (0,1)$. $\widehat X^{-1}B$ is $B$-orthogonal by its construction as it has the same structure as $\hatMinvB$. 
\end{proof}

With the above results and \cref{thm:properties Esouth MinvA B-normal} we can now prove another characterization for the $B$-norm of the error operator $\Esouth$.

\begin{theorem}\label{thm:B-norm Esouth 1-lambdamin} 
    Let the smoothing assumption be satisfied, $M^{-1}A$ be $B$-normal and $I-M^{-1}A$ be nonsingular. Further, let $\Pi_A$ be a $B$-orthogonal projection with $\Pi_A\neq 0$ and $\Pi_A \neq I_n$ and $\nu \in \N$. Then it holds 
    \begin{displaymath}
        \norm{E^{\nu,\nu}}_B = \norm{E^{\nu,0}}_B^2 = \norm{E^{0,\nu}}_B^2 = 1 - \lambda_{min}^+(\widehat{X}^{-1}B(I - \Pi_A))
    \end{displaymath}
    where $\widehat{X}^{-1}B = I-(I-\hatMinvB)^\nu$ and $\lambda_{min}^+(\widehat{X}^{-1}B(I - \Pi_A)) \in (0,1)$. Additionally, $\widehat X^{-1}B$ is $B$-orthogonal, $\widehat X^{-1}$ is HPD and $\sigma(\widehat{X}^{-1}B(I - \Pi_A)) \subseteq [0,1)$.
\end{theorem}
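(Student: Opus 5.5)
The plan is to combine \cref{thm:properties Esouth MinvA B-normal}, \cref{lem:smoothingstepreduction} and \cref{prop:cor2.9 B-normal}.

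\textbf{Step 1: reduce to one eigenvalue.} By \cref{thm:properties Esouth MinvA B-normal}, since $\Pi_A$ is $B$-orthogonal and $M^{-1}A$ is $B$-normal, all three norms coincide:
\begin{displaymath}
\norm{E^{\nu,\nu}}_B = \norm{E^{\nu,0}}_B^2 = \norm{E^{0,\nu}}_B^2 = \lambda_{max}\bigl((I-\hatMinvB)^\nu(I-\Pi_A)\bigr).
\end{displaymath}
So it only remains to identify this largest eigenvalue.

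\textbf{Step 2: collapse the smoothing steps.} By \cref{lem:smoothingstepreduction}, $(I-\hatMinvB)^\nu = I-\widehat X^{-1}B$. Here $\widehat X^{-1}B$ is $B$-orthogonal, hence $B$-normal, $\widehat X^{-1}$ is HPD, and $\sigma(\widehat X^{-1}B)\subseteq(0,1)$. This already gives the stated properties of $\widehat X^{-1}B$ and the formula $\widehat X^{-1}B = I-(I-\hatMinvB)^\nu$.

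\textbf{Step 3: match the form of \cref{prop:cor2.9 B-normal}.} Set $\Pi := I-\Pi_A$. Then $\Pi$ is a projection, and it is $B$-orthogonal because $\Pi^+ = I-\Pi_A^+ = I-\Pi_A$. Also $\Pi\neq 0$ and $\Pi\neq I_n$, since $\Pi_A\neq I_n$ and $\Pi_A\neq 0$. Put $C := \widehat X^{-1}B$. Since $I-\Pi = \Pi_A$,
\begin{displaymath}
(I-C)(I-\Pi_A) = I - C\Pi - (I-\Pi).
\end{displaymath}

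\textbf{Step 4: apply \cref{prop:cor2.9 B-normal}.} It gives
\begin{displaymath}
\lambda_{max}\bigl(I-C\Pi-(I-\Pi)\bigr) = 1-\lambda_{min}^+(C\Pi), \qquad \lambda_{min}^+(C\Pi)\in(0,1).
\end{displaymath}
Here $C\Pi = \widehat X^{-1}B(I-\Pi_A)$, so Step 1 yields the claimed formula $1-\lambda_{min}^+(\widehat X^{-1}B(I-\Pi_A))$ and the range $\lambda_{min}^+\in(0,1)$.

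\textbf{Remaining spectral claim.} We still need $\sigma(\widehat X^{-1}B(I-\Pi_A))\subseteq[0,1)$. Conjugate by $B^{1/2}$, as in the proof of \cref{prop:cor2.9 B-normal}. This turns $\widehat X^{-1}B(I-\Pi_A)$ into $\widetilde C\widetilde\Pi$, with $\widetilde C$ Hermitian, $\sigma(\widetilde C)\subseteq(0,1)$, and $\widetilde\Pi$ an orthogonal projection. The product $\widetilde C\widetilde\Pi$ has the same spectrum as $\widetilde\Pi\widetilde C\widetilde\Pi$. For any $x$,
\begin{displaymath}
x^H\widetilde\Pi\widetilde C\widetilde\Pi x = (\widetilde\Pi x)^H\widetilde C(\widetilde\Pi x) \in \bigl[0,\norm{\widetilde\Pi x}_2^2\bigr) \text{ if } \widetilde\Pi x\neq 0, \quad\text{and } = 0 \text{ otherwise}.
\end{displaymath}
So the Rayleigh quotient of the Hermitian matrix $\widetilde\Pi\widetilde C\widetilde\Pi$ lies in $[0,1)$, which gives eigenvalues in $[0,1)$.

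There is no real obstacle. The only care needed is checking the hypotheses: that $I-\Pi_A$, not $\Pi_A$, plays the role of $\Pi$; that the strict inclusion $\sigma(C)\subseteq(0,1)$ holds, which comes from the nonsingularity of $I-M^{-1}A$ via \cref{lem:smoothingstepreduction}; and the spectral inclusion in the final step.
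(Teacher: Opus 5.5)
Your proposal is correct and follows the paper's proof essentially step for step. Like the paper, you use \cref{thm:properties Esouth MinvA B-normal} to reduce everything to $\lambda_{max}((I-\hatMinvB)^\nu(I-\Pi_A))$, \cref{lem:smoothingstepreduction} to replace $(I-\hatMinvB)^\nu$ by $I-\widehat X^{-1}B$, and \cref{prop:cor2.9 B-normal} with $\Pi = I-\Pi_A$. The only deviation is the final inclusion $\sigma(\widehat X^{-1}B(I-\Pi_A))\subseteq[0,1)$: your direct Rayleigh-quotient argument for $\widetilde\Pi\widetilde C\widetilde\Pi$ replaces the paper's appeal to \cref{lem:spec} and is slightly tighter, since going through \cref{lem:spec} only bounds the nonzero eigenvalues by $1$ unless one also tracks the multiplicity of the eigenvalue $1$ of $\widehat X^{-1}B(I-\Pi_A)+\Pi_A$.
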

\begin{proof}
    Let the smoothing assumption be satisfied, $M^{-1}A$ be $B$-normal, $I-M^{-1}A$ be nonsingular, $\Pi_A$ be a $B$-orthogonal projection and $\nu \in \N$. \Cref{lem:smoothingstepreduction} implies that $I-\hatMinvB$ is nonsingular and there exists a $B$-orthogonal matrix $\widehat X^{-1}B$ with $\widehat X^{-1}B = I-(I-\hatMinvB)^\nu$ and $\sigma(\widehat X^{-1}B) \subseteq (0,1)$. Additionally, $\widehat X^{-1}$ is HPD. From the smoothing assumption and \cref{cor:equiv_smoothing_assump} we obtain $\sigma(\hatMinvB) \subseteq (0,1)$. Finally, \cref{thm:properties Esouth MinvA B-normal} together with \cref{prop:cor2.9 B-normal} imply that
    \begin{align*}
        \norm{E^{\nu,\nu}}_B &= \norm{E^{0,\nu}}_B^2 = \norm{E^{\nu,0}}_B^2 = \lambda_{max}((I-\hatMinvB)^\nu (I-\Pi_A)) 
        \\ &= \lambda_{max}((I-\widehat X^{-1}B)(I-\Pi_A)) = \lambda_{max}(I-\widehat X^{-1}B(I-\Pi_A) -\Pi_A) 
        \\ &= 1- \lambda_{min}^+(\widehat X^{-1}B(I-\Pi_A))
    \end{align*}
    with $\lambda_{min}^+(\widehat X^{-1}B(I-\Pi_A)) \in (0,1)$. Notice that we used \cref{prop:cor2.9 B-normal} with $\Pi = I-\Pi_A$. By \cref{prop:cor2.9 B-normal} we know that
    \begin{equation*}
        \sigma((I-\widehat X^{-1}B)(I-\Pi_A)) = \sigma(I-\widehat X^{-1}B(I-\Pi_A) -\Pi_A) \subseteq [0,1)
    \end{equation*}
    and with \cref{lem:spec} it follows that $\sigma(X^{-1}B(I-\Pi_A)) \subseteq [0,1)$.
\end{proof}

\begin{remark}
    For $\nu = 1$, it is $\widehat X^{-1}B = \hatMinvB$. The assumption that $\Pi_A \neq 0$ and $\Pi_A \neq I_n$ is only a technical requirement to apply \cref{prop:cor2.9 B-normal}. In practice, any multigrid method will satisfy this as $0 < \dim(\calR(\Pi_A)) = n_c < n$ is vacuously true. Since the eigenvalues of $\widehat X^{-1}B(I-\Pi_A)$ are real, we can assume that they are ordered so that
    \begin{displaymath}
        0 = \lambda_1 \leq \dots \leq 0 = \lambda_{n_c} \leq \lambda_{n_c+1} \leq \dots \leq \lambda_n < 1
    \end{displaymath}
    where $\lambda_1, \dots, \lambda_{n_c}$ are the $n_c$ zero eigenvalues and $\lambda_{n_c+1}, \dots, \lambda_n$ are the nonzero eigenvalues. Then we obtain
    \begin{displaymath}
        \norm{E^{\nu,\nu}}_B = 1 - \lambda_{min}^+(\widehat{X}^{-1}B(I - \Pi_A)) = 1- \lambda_{n_c+1}(\widehat{X}^{-1}B(I - \Pi_A)).
    \end{displaymath}
\end{remark}

\begin{remark}\label{rem:B-normal and B-ortho}
    It is important to notice that the $B$-normality of $M^{-1}A$ and the $B$-orthogonality of $\Pi_A$ restrict the choice of possible $B$-norms that can be considered. By \cref{thm:B-normal_charact}~(\labelcref{thm:enum:diag and decomp B}) we know that each possible matrix $B$ can be written as $B = (VD_1V^H)^{-1}$ and $B = (WD_2W^H)^{-1}$. Here, $V$ are the eigenvectors of $M^{-1}A$ and $D_1$ is some HPD block diagonal matrix with block sizes corresponding to those of the eigenvalue matrix $\Lambda$ of $M^{-1}A$ (we assume without loss of generality that the eigenvalues are ordered so that the eigenvalues are clustered into blocks). $W$ are the eigenvectors of $\Pi_A$ and $D_2$ is some HPD block diagonal matrix with two blocks of size $n_c$ and $n-n_c$. 
\end{remark}

We can easily achieve a similar result for the error propagation matrix $E_+^{1,1}$ without assuming that $M^{-1}A$ is $B$-normal.

\begin{theorem}\label{thm:B-norm Eplus 1-lambdamin}
    Let the smoothing assumption be satisfied, $\Pi_A$ be a $B$-orthogonal projection with $\Pi_A\neq 0$ and $\Pi_A \neq I_n$ and $I-\hatMinvB$ be nonsingular. Then it holds 
    \begin{displaymath}
        \norm{E_+^{1,1}}_B = \norm{E_+^{1,0}}_B^2 =\norm{E_+^{0,1}}_B^2 = 1- \lambda_{min}^+(\hatMinvB(I-\Pi_A))
    \end{displaymath}
    with $\lambda_{min}^+(\hatMinvB(I - \Pi_A)) \in (0,1)$. Additionally, it holds $\sigma(\hatMinvB(I - \Pi_A)) \subseteq [0,1)$.
\end{theorem}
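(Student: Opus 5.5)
The plan mirrors the proof of \cref{thm:B-norm Esouth 1-lambdamin}, with $\nu=1$, and replaces the $B$-normality of $M^{-1}A$ by the structural facts for $E_+$ from \cref{thm:properties Eplus Esouth}.

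First I establish the norm identities. By \cref{thm:properties Eplus Esouth}~(\labelcref{thm:enum:property norm Eplus split}) with $\nu=1$, we have $\norm{E_+^{1,1}}_B = \norm{E_+^{1,0}}_B^2 = \norm{E_+^{0,1}}_B^2$. I then compute $\norm{E_+^{0,1}}_B^2$ with \cref{prop:B-norm lambda_max}, using the form $\lambda_{max}(CC^+)$ with $C=E_+^{0,1}=(I-(M^{-1}A)^+)(I-\Pi_A)$. Since $\Pi_A^+=\Pi_A$ and $(I-\Pi_A)^2=I-\Pi_A$,
\begin{displaymath}
    E_+^{0,1}(E_+^{0,1})^+ = (I-(M^{-1}A)^+)(I-\Pi_A)(I-M^{-1}A).
\end{displaymath}
This product has the same nonzero spectrum as $(I-\Pi_A)(I-M^{-1}A)(I-(M^{-1}A)^+)(I-\Pi_A) = (I-\Pi_A)(I-\hatMinvB)(I-\Pi_A)$, by the cyclic invariance of nonzero eigenvalues. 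That matrix in turn has the same nonzero spectrum as $(I-\hatMinvB)(I-\Pi_A)$. Whichever of these forms is convenient, the conclusion is $\norm{E_+^{1,1}}_B = \lambda_{max}((I-\hatMinvB)(I-\Pi_A))$. I still have to check that the maximal eigenvalue is positive, or handle the zero eigenvalues; if all nonzero eigenvalues vanished, the norm would be $0$ and $\lambda_{max}=0$ anyway.

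Next I put this into the form required by \cref{prop:cor2.9 B-normal}, taking $C=\hatMinvB$ and $\Pi = I-\Pi_A$. The hypotheses hold as follows. $\hatMinvB$ is $B$-orthogonal by \cref{prop:properties tildeMinvB hatMinvB}, hence $B$-normal. By \cref{thm:smoothing_assumpt_BatNab} the smoothing assumption gives $\sigma(\hatMinvB)\subseteq(0,1]$, and the nonsingularity of $I-\hatMinvB$ excludes the eigenvalue $1$, so $\sigma(\hatMinvB)\subseteq(0,1)$. Finally, $I-\Pi_A$ is a $B$-orthogonal projection different from $0$ and $I_n$. Since $(I-\hatMinvB)(I-\Pi_A) = I-\hatMinvB(I-\Pi_A)-\Pi_A$, \cref{prop:cor2.9 B-normal} yields
\begin{displaymath}
    \lambda_{max}\bigl((I-\hatMinvB)(I-\Pi_A)\bigr) = 1-\lambda_{min}^+(\hatMinvB(I-\Pi_A)),
\end{displaymath}
with $\lambda_{min}^+\in(0,1)$ and $\sigma((I-\hatMinvB)(I-\Pi_A))\subseteq[0,1)$.

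For the last claim I apply \cref{lem:spec} with the nonsingular matrix $I-\hatMinvB$, exactly as at the end of the proof of \cref{thm:B-norm Esouth 1-lambdamin}. Alternatively, I conjugate by $B^{1/2}$ and use that $\hatMinvB(I-\Pi_A)$ is similar on its nonzero part to a Hermitian matrix with spectrum in $[0,1)$; this gives $\sigma(\hatMinvB(I-\Pi_A))\subseteq[0,1)$.

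The main obstacle is the first step. The point is to choose the ordering, $CC^+$ versus $C^+C$, so that the product $(I-M^{-1}A)(I-(M^{-1}A)^+)$ appears, which is $I-\hatMinvB$, rather than $(I-(M^{-1}A)^+)(I-M^{-1}A)$, which is $I-\tildeMinvB$. Without $B$-normality these two matrices differ, and I also need the eigenvalue-cycling argument to be stated carefully so that it covers the $\lambda_{max}$ and not only the spectral radius. If the ordering lands on $\tildeMinvB$ instead, I would switch to $\norm{E_+^{1,0}}_B^2$, for which the other ordering yields $\hatMinvB$ directly.
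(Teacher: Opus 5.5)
Your proposal is correct and is essentially the paper's proof: the norm identities from \cref{thm:properties Eplus Esouth}, then the product $(I-(M^{-1}A)^+)(I-\Pi_A)(I-M^{-1}A)$ (which the paper writes as $(E_+^{1,0})^+E_+^{1,0}$) cycled to $(I-\hatMinvB)(I-\Pi_A)$, then \cref{prop:cor2.9 B-normal} with $\Pi = I-\Pi_A$, and finally \cref{lem:spec}. Your justification of $\sigma(\hatMinvB)\subseteq(0,1)$ via \cref{thm:smoothing_assumpt_BatNab} plus the nonsingularity of $I-\hatMinvB$ is cleaner than the paper's written proof, which cites results that assume $B$-normality of $M^{-1}A$; your ordering concern is also moot, since every ordering of $E_+^{1,0}$ or $E_+^{0,1}$ cycles to $\hatMinvB$.
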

\begin{proof}
    Let the smoothing assumption be satisfied, $\Pi_A$ be a $B$-orthogonal projection with $\Pi_A\neq 0$ and $\Pi_A \neq I_n$ and $I-\hatMinvB$ be nonsingular. Then $\sigma(\hatMinvB) \subseteq (0,1)$ and with \cref{prop:cor2.9 B-normal} and \cref{thm:properties Eplus Esouth} we get 
    \Cref{lem:smoothingstepreduction} implies that $I-\hatMinvB$ is nonsingular and there exists a $B$-orthogonal matrix $\widehat X^{-1}B$ with $\widehat X^{-1}B = I-(I-\hatMinvB)^\nu$ and $\sigma(\widehat X^{-1}B) \subseteq (0,1)$. From the smoothing assumption and \cref{cor:equiv_smoothing_assump} we obtain $\sigma(\hatMinvB) \subseteq (0,1)$. Finally, \cref{thm:properties Esouth MinvA B-normal} together with \cref{prop:cor2.9 B-normal} imply that
    \begin{align*}
        \norm{E_+^{1,1}}_B &= \norm{E_+^{1,0}}_B^2 = \norm{E_+^{0,1}}_B^2
        \\&= \lambda_{max}((I-(M^{-1}A)^+)(I-\Pi_A)^+(I-\Pi_A)(I-M^{-1}A))
        \\ &= \lambda_{max}((I-\hatMinvB)(I-\Pi_A)) = \lambda_{max} (I-\hatMinvB(I-\Pi_A)-\Pi_A) 
        \\ &= 1- \lambda_{min}^+(\hatMinvB(I-\Pi_A))
    \end{align*}
    with $\lambda_{min}^+(\hatMinvB(I-\Pi_A)) \in (0,1)$. Notice that we again used \cref{prop:cor2.9 B-normal} with $\Pi = I-\Pi_A$. By \cref{prop:cor2.9 B-normal} we know that
    \begin{equation*}
        \sigma((I-\widehat M^{-1}B)(I-\Pi_A)) = \sigma(I-\widehat M^{-1}B(I-\Pi_A) -\Pi_A) \subseteq [0,1)
    \end{equation*}
    and with \cref{lem:spec} it follows that $\sigma(M^{-1}B(I-\Pi_A)) \subseteq [0,1)$.
\end{proof}

In addition to the above characterizations, the convergence rate of a two-grid method is also described by a certain constant $K$. For two HPD matrices $G$ and $C$ we define

\begin{align}
    K_{G,C}(P) & = \underset{v\in\mathbb{C}^n\setminus\{0\}}{\max}\frac{v^HG(I-P\big(P^HGP)^{-1}P^HG\big)v}{v^HCv} \nonumber 
    \\& = \Bigg(\underset{v\in\mathbb{C}^n\setminus\{0\}}{\max}\frac{\norm{(I-P(P^HGP)^{-1}P^HG)v}_G}{\norm{v}_{C}}\Bigg)^2. \label{eq:def KG}
\end{align}

The constant does not actually depend on the chosen interpolation operator $P$, but rather on its image $\calR(P)$. However, to denote that there still is some dependence on the underlying interpolation, we write $K_{G,C}(P)$. This constant is used quite often in the analysis of multigrid methods for HPD matrices, see e.g. \cite{FalVasZik2005, MacOls2014,XuZik2017}). With this constant $K_{G,C}$ we can prove the following results that again generalize the known results from the HPD case to the nonsymmetric setting.
\begin{theorem} \label{thm:B-norm Esouth 1-K}
    Let the smoothing assumption be satisfied, $M^{-1}A$ be $B$-normal and $I-M^{-1}A$ be nonsingular. Further, let $\Pi_A(P_*,R)$ be a $B$-orthogonal projection with $0 < \dim(\calR(\Pi_A(P_*,R)) = n_c < n$ and $\nu \in \N$. Then it holds
    \begin{equation*}
        \norm{E^{\nu, \nu}}_B = \norm{E^{\nu,0}}_B^2 = \norm{E^{0,\nu}}_B^2 = 1 - \frac{1}{K_{\widehat{X},B}(P_*)}
    \end{equation*}
    where $\widehat X = B(I-(I-\hatMinvB)^\nu)^{-1}$ is HPD and $K_{\widehat{X},B}(P_*) \in (0,1)$.
\end{theorem}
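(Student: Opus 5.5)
The plan is to reduce the statement to \cref{thm:B-norm Esouth 1-lambdamin}. That theorem already gives
\begin{displaymath}
\norm{E^{\nu,\nu}}_B=\norm{E^{\nu,0}}_B^2=\norm{E^{0,\nu}}_B^2=1-\lambda_{min}^+(\widehat X^{-1}B(I-\Pi_A)),
\end{displaymath}
where $\widehat X^{-1}B=I-(I-\hatMinvB)^\nu$ is $B$-orthogonal, $\widehat X^{-1}$ is HPD and $\sigma(\widehat X^{-1}B)\subseteq(0,1)$. Hence $\widehat X=B(I-(I-\hatMinvB)^\nu)^{-1}$ is HPD, and everything reduces to the purely algebraic identity
\begin{displaymath}
\lambda_{min}^+(\widehat X^{-1}B(I-\Pi_A))=\frac{1}{K_{\widehat X,B}(P_*)}.
\end{displaymath}
By \cref{eq:Pi_A with Pstar} we may write $\Pi_A=\Pi_B(P_*,P_*)=P_*(P_*^HBP_*)^{-1}P_*^HB$. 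Throughout, let $Q\in\C^{n\times(n-n_c)}$ have columns spanning $\calR(P_*)^{\bot}$ in the Euclidean sense, i.e. $Q^HP_*=0$.

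\emph{Step 1: rewrite $\lambda_{min}^+$ as a Rayleigh quotient.} Since $(I-\Pi_A)^2=I-\Pi_A$, the nonzero eigenvalues of $\widehat X^{-1}B(I-\Pi_A)$ coincide with those of $(I-\Pi_A)\widehat X^{-1}B(I-\Pi_A)$. The latter operator is $B$-orthogonal, because $I-\Pi_A$ and $\widehat X^{-1}B$ are. It is also positive on $\calR(I-\Pi_A)=\calN(P_*^HB)$ and vanishes on $\calR(\Pi_A)$. Therefore
\begin{displaymath}
\lambda_{min}^+=\min_{0\neq w\in\calN(P_*^HB)}\frac{w^HB\widehat X^{-1}Bw}{w^HBw}.
\end{displaymath}
Substituting $y=Bw$, the constraint becomes $y\in\calR(P_*)^\bot=\calR(Q)$, and the quotient becomes $y^H\widehat X^{-1}y/y^HB^{-1}y$. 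Writing $y=Qz$ gives
\begin{displaymath}
\lambda_{min}^+=\lambda_{min}\bigl((Q^HB^{-1}Q)^{-1}Q^H\widehat X^{-1}Q\bigr).
\end{displaymath}

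\emph{Step 2: express $K$ through the same matrices.} Here I use the standard complementary-projection identity for an HPD matrix $G$:
\begin{displaymath}
G-GP_*(P_*^HGP_*)^{-1}P_*^HG=Q(Q^HG^{-1}Q)^{-1}Q^H.
\end{displaymath}
This holds because both sides are Hermitian, annihilate $P_*$, and agree on $G^{-1}\calR(Q)$ (which, together with $\calR(P_*)$, spans $\C^n$). Taking $G=\widehat X$ in \cref{eq:def KG} gives
\begin{displaymath}
K_{\widehat X,B}(P_*)=\lambda_{max}\bigl(B^{-1}Q(Q^H\widehat X^{-1}Q)^{-1}Q^H\bigr)=\lambda_{max}\bigl((Q^H\widehat X^{-1}Q)^{-1}Q^HB^{-1}Q\bigr),
\end{displaymath}
where the second equality uses that the nonzero spectrum is invariant under cyclic permutation of the factors.

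\emph{Step 3: conclude.} The matrix in Step 2 is the inverse of the matrix in Step 1, so its largest eigenvalue is the reciprocal of the smallest eigenvalue of the Step 1 matrix. This yields $K_{\widehat X,B}(P_*)=1/\lambda_{min}^+$, and the norm formula follows from \cref{thm:B-norm Esouth 1-lambdamin}.

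The range of $K$ comes from $\lambda_{min}^+\in(0,1)$ in \cref{thm:B-norm Esouth 1-lambdamin}, which forces $1/K\in(0,1)$, i.e. $K>1$. I will check this carefully against the stated range $K_{\widehat X,B}(P_*)\in(0,1)$, which appears to be a misprint for $1/K_{\widehat X,B}(P_*)\in(0,1)$.

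I expect the main obstacle to be Step 2. The projection appearing in $K$ is $\widehat X$-orthogonal, whereas $\Pi_A$ is $B$-orthogonal, so the two quantities are not visibly related. The complement identity via $Q$ is exactly the tool that removes the dependence on which inner product defines the projection: both sides end up expressed solely through $Q$, $B^{-1}$ and $\widehat X^{-1}$.
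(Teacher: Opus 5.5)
Your proof is correct, and you read the range correctly. Since $\lambda_{min}^+(\widehat X^{-1}B(I-\Pi_A))\in(0,1)$, one gets $K_{\widehat X,B}(P_*)>1$. The stated $K_{\widehat X,B}(P_*)\in(0,1)$ is a slip, repeated in the paper's proof as $1/\lambda_{n_c+1}\in(0,1)$; otherwise $1-1/K$ would be negative.

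Your route differs from the paper's in the key step.

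The paper stays in $\C^n$:
\begin{enumerate}[(1)]
\item It uses \cref{lem:spec} to pass from $\widehat X^{-1}B(I-\Pi_A)$ to the nonsingular matrix $T=\widehat X^{-1}B(I-\Pi_A)+\Pi_A$.
\item It quotes the explicit inverse $T^{-1}=B^{-1}\widehat X(I-Q_{\widehat X}\widehat X)+Q_{\widehat X}\widehat X$ from \cite{GarKehNab2020}, based on $Q_B\widehat XQ_{\widehat X}=Q_B$ and $Q_BBQ_{\widehat X}=Q_{\widehat X}$.
\item It applies \cref{lem:spec} a second time, now with the $\widehat X$-orthogonal projection $Q_{\widehat X}\widehat X$.
\item It finishes with Courant--Fischer.
\end{enumerate}

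You instead do three things:
\begin{enumerate}[(1)]
\item Compress the $B$-self-adjoint operator $(I-\Pi_A)\widehat X^{-1}B(I-\Pi_A)$ to $\calN(\Pi_A)$.
\item Change variables $y=Bw$ to the Euclidean complement $\calR(Q)=\calR(P_*)^\perp$.
\item Use the complement identity $G-GP_*(P_*^HGP_*)^{-1}P_*^HG=Q(Q^HG^{-1}Q)^{-1}Q^H$.
\end{enumerate}
After this, both quantities are extreme eigenvalues of the two mutually inverse $(n-n_c)\times(n-n_c)$ matrices $(Q^HB^{-1}Q)^{-1}Q^H\widehat X^{-1}Q$ and $(Q^H\widehat X^{-1}Q)^{-1}Q^HB^{-1}Q$.

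What each route buys:
\begin{itemize}
\item Your version is self-contained, with no quoted inverse formula and no double use of \cref{lem:spec}. It also makes transparent why a $B$-orthogonal and an $\widehat X$-orthogonal projection can be compared at all: both are expressed through the same $Q$.
\item The paper's version reuses its existing toolkit and directly exhibits the full reciprocal correspondence of nonzero spectra in $\C^n$. Your reduced matrices give that correspondence too if you want it.
\end{itemize}

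Two small remarks. In Step 2, the Hermitian observation is unnecessary; agreement on a spanning set already forces equality. In the cyclic-permutation step, note that $(Q^H\widehat X^{-1}Q)^{-1}Q^HB^{-1}Q$ is similar to an HPD matrix, so all its eigenvalues are positive. That is what lets you equate the largest eigenvalues, not just the nonzero spectra.
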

\begin{proof}
    Let the smoothing assumption be satisfied, $M^{-1}A$ be $B$-normal, $I-M^{-1}A$ be nonsingular, $\Pi_A(P_*,R)$ be a $B$-orthogonal projection with $\dim(\calR(\Pi_A)) = n_c$ and $\nu \in \N$. Since $0< n_c <n$, by \cref{thm:B-norm Esouth 1-lambdamin} it holds
    \begin{displaymath}
        \norm{E^{\nu,\nu}}_B = \norm{E^{\nu,0}}_B^2 = \norm{E^{0,\nu}}_B^2 = 1 - \lambda_{min}^+(\widehat{X}^{-1}B(I - \Pi_A))
    \end{displaymath}
    with $\widehat{X}^{-1}B = I-(I-\hatMinvB)^\nu$ and $\lambda_{min}^+(\widehat{X}^{-1}B(I - \Pi_A)) \in (0,1)$. Furthermore, $\widehat X$ is HPD and $\sigma(\widehat X^{-1}B(I-\Pi_A)) \subseteq [0,1)$. Thus we can assume that
    \begin{equation*}
        \sigma(\widehat{X}^{-1}B(I -\Pi_A)) := \{0\} \cup \{ \lambda_{n_c+1}, \ldots, \lambda_n\}
    \end{equation*}
    where $0 < \lambda_{n_c+1} \leq  \dots \leq  \lambda_n<1$ are the nonzero eigenvalues. For convenience, let
    \begin{equation*}
        Q_B := P_*(P_*^HBP_*)^{-1}P_*^H
    \end{equation*}
    where we omit the dependence on $P_*$. Then we have $\Pi_A(P_*,R) = Q_B B$. With \cref{lem:spec} it follows 
    \begin{equation*}
        \sigma(\widehat{X}^{-1}B(I -\Pi_A) + \Pi_A) := \{1\} \cup \{ \lambda_{n_c+1}, \ldots, \lambda_n\}
    \end{equation*}
    and hence the nonzero eigenvalues of $\widehat{X}^{-1}B(I -\Pi_A)$ are the eigenvalues different from $1$ of
    \begin{equation*}
        T:= \widehat{X}^{-1}B(I - \Pi_A) + \Pi_A = \widehat{X}^{-1}B(I - Q_B B) + Q_B B.
    \end{equation*}
    Then $T$ is nonsingular. For $Q_{\widehat {X}} := P_*(P_*^H\widehat{X}P_*)^{-1}P_*^H$ it holds $Q_B\widehat{X}Q_{\widehat {X}} = Q_B $ as well as $Q_BBQ_{\widehat {X}} = Q_{\widehat {X}}$. A straightforward calculation then shows (cf. \cite[Theorem~2.5]{GarKehNab2020}) that
    \begin{equation*}
        T^{-1} = B^{-1}\widehat{X}(I - Q_{\widehat {X}}\widehat{X}) + Q_{\widehat {X}}\widehat{X}. 
    \end{equation*}
    This implies 
    \begin{equation*}
        \sigma(T^{-1}) = \{1\} \cup \left\{ \frac{1}{\lambda_{n_c+1}}, \ldots,  \frac{1}{\lambda_{n}} \right\}
    \end{equation*}
    and since $Q_{\widehat {X}}\widehat{X}$ is a projection we can apply \cref{lem:spec} again to obtain
    \begin{equation} \label{eq:eig2}
        \sigma(B^{-1}\widehat{X}(I - Q_{\widehat {X}}\widehat{X})) = \{0\} \cup \left\{ \frac{1}{\lambda_{n_c+1}}, \ldots,  \frac{1}{\lambda_{n}} \right\}.
    \end{equation}
    With the order of the eigenvalues, we obtain 
    \begin{equation*}
        \frac{1}{\lambda_n} \leq \dots \leq \frac{1}{\lambda_{n_c+1}}
    \end{equation*}
    which implies that 
    \begin{equation*}
        \lambda_{max}(B^{-1}\widehat{X}(I - Q_{\widehat {X}}\widehat{X})) = \frac{1}{\lambda_{n_c+1}} = \frac{1}{\lambda_{min}^+(\widehat X^{-1}B(I-\Pi_A))} \in (0,1)
    \end{equation*}
    The matrices $B$ and $\widehat{X}$ are HPD and $\widehat{X}(I - Q_{\widehat{X}}\widehat{X})$ is Hermitian, we can apply the Courant-Fischer-Theorem and obtain 
    \begin{align*}
        \lambda_{max}(B^{-1}\widehat{X}(I - Q_{\widehat {X}}\widehat{X})) = \sup_{v \neq 0} \frac{v^H\widehat X(I-Q_{\widehat {X}}\widehat{X}) v}{v^HBv} = K_{\widehat X, B}(P_*) \in (0,1).
    \end{align*}
    Finally, we get
    \begin{align*}
        \norm{E^{\nu,\nu}}_B = \norm{E^{\nu,0}}_B^2 = \norm{E^{0,\nu}}_B^2 &= 1 - \lambda_{min}^+(\widehat{X}^{-1}B(I - \Pi_A)) 
        \\ &= 1- \frac{1}{\lambda_{max}(B^{-1}\widehat{X}(I - Q_{\widehat {X}}\widehat{X}))} = 1- \frac{1}{K_{\widehat X, B}(P_*)}.
    \end{align*}
\end{proof}

This result generalizes the known results \cite[Theorem~4.3]{FalVasZik2005}, \cite[Theorem~7]{MacOls2014} and \cite[Theorem~5.4]{XuZik2017} from the HPD case to the nonsymmetric and indefinite case. We can prove a similar result for the operator $\Eplus$.

\begin{theorem}\label{thm:B-norm Eplus 1-K}
    Let the smoothing assumption be satisfied, $I-\hatMinvB$ be nonsingular and $\Pi_A(P_*,R)$ be a $B$-orthogonal projection with $0 < \dim(\calR(\Pi_A(P_*,R)) = n_c < n$. Then it holds
    \begin{equation*}
        \norm{E_+^{1, 1}}_B = \norm{E_+^{1,0}}_B^2 = \norm{E_+^{0,1}}_B^2 = 1 - \frac{1}{K_{\widehat{M},B}(P_*)}
    \end{equation*}
    with $K_{\widehat{M},B}(P_*) \in (0,1)$.
\end{theorem}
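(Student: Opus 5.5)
The plan is to mirror the proof of \cref{thm:B-norm Esouth 1-K}, with \cref{thm:B-norm Eplus 1-lambdamin} used in place of \cref{thm:B-norm Esouth 1-lambdamin} and $\hatMinvB$ in place of $\widehat X^{-1}B$. No $B$-normality of $M^{-1}A$ is needed, because the previous proof only used $B$-orthogonality of $\widehat X^{-1}B$, $\sigma(\widehat X^{-1}B)\subseteq(0,1)$ and $\widehat X$ HPD.

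\textbf{Step 1: the properties of the symmetrized smoother.} The smoothing assumption and \cref{thm:smoothing_assumpt_BatNab} give that $\widehat M^{-1}$ is HPD and $\sigma(\hatMinvB)\subseteq(0,1]$. Since $I-\hatMinvB$ is nonsingular, $1\notin\sigma(\hatMinvB)$, so $\sigma(\hatMinvB)\subseteq(0,1)$. By \cref{prop:properties tildeMinvB hatMinvB}, $\hatMinvB$ is $B$-orthogonal. Hence $\widehat M=(\widehat M^{-1})^{-1}$ is HPD.

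\textbf{Step 2: reduce the norm to a positive eigenvalue.} By \cref{thm:B-norm Eplus 1-lambdamin},
\[
\norm{E_+^{1,1}}_B=\norm{E_+^{1,0}}_B^2=\norm{E_+^{0,1}}_B^2=1-\lambda_{min}^+\bigl(\hatMinvB(I-\Pi_A)\bigr),
\]
with the spectrum of $\hatMinvB(I-\Pi_A)$ contained in $[0,1)$. We write the spectrum as $\{0\}\cup\{\lambda_{n_c+1},\dots,\lambda_n\}$, ordered increasingly with all $\lambda_i\in(0,1)$.

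\textbf{Step 3: invert $T$ and pass to a Rayleigh quotient.} Set $Q_B=P_*(P_*^HBP_*)^{-1}P_*^H$, so that by \cref{eq:Pi_A with Pstar} we have $\Pi_A=Q_BB$. Set $Q_{\widehat M}=P_*(P_*^H\widehat MP_*)^{-1}P_*^H$.
\begin{enumerate}[(a)]
\item By \cref{lem:spec}, the eigenvalues $\lambda_{n_c+1},\dots,\lambda_n$ are exactly the eigenvalues different from $1$ of $T=\hatMinvB(I-Q_BB)+Q_BB$, so $T$ is nonsingular.
\item Using the identities $Q_B\widehat MQ_{\widehat M}=Q_B$ and $Q_BBQ_{\widehat M}=Q_{\widehat M}$, which follow directly from the definitions, verify that
\[
T^{-1}=B^{-1}\widehat M(I-Q_{\widehat M}\widehat M)+Q_{\widehat M}\widehat M.
\]
This is done by multiplying out $T\,T^{-1}$, as in \cite[Theorem~2.5]{GarKehNab2020}.
\item Since $Q_{\widehat M}\widehat M$ is a projection with the appropriate range dimension, a second application of \cref{lem:spec} gives
\[
\sigma\bigl(B^{-1}\widehat M(I-Q_{\widehat M}\widehat M)\bigr)=\{0\}\cup\{1/\lambda_{n_c+1},\dots,1/\lambda_n\}.
\]
Its largest eigenvalue is therefore $1/\lambda_{n_c+1}$.
\item The matrix $\widehat M(I-Q_{\widehat M}\widehat M)$ is Hermitian and $B$ is HPD, so the Courant--Fischer theorem for the pencil identifies this largest eigenvalue with the maximum in \cref{eq:def KG}, namely $K_{\widehat M,B}(P_*)$.
\end{enumerate}
Combining (c) and (d) with Step 2 yields $1-1/K_{\widehat M,B}(P_*)$.

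\textbf{Main obstacle.} The delicate step is verifying the inverse formula for $T$ and the two projector identities. One must confirm that they hold with $\widehat M$ and $B$ both HPD but otherwise unrelated. This needs only $\calR(Q_{\widehat M})=\calR(Q_B)=\calR(P_*)$ together with $P_*^HBQ_{\widehat M}\widehat M=P_*^H\widehat M$-type cancellations, and I would check it directly.

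I would also check the claimed range of $K_{\widehat M,B}(P_*)$ against what the argument actually gives. From $\lambda_{n_c+1}\in(0,1)$ we get $K=1/\lambda_{n_c+1}>1$, and that is what makes $1-1/K\in(0,1)$. The stated range $(0,1)$ for $K$, copied from the analogous theorem, appears to be a typo for $(1,\infty)$. I would prove $K>1$ and keep the norm identity as stated.
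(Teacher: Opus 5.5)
Your proposal is correct and follows essentially the paper's route: the paper's proof simply says to repeat the proof of \cref{thm:B-norm Esouth 1-K} with $\widehat X$ replaced by $\widehat M$ and with \cref{thm:B-norm Eplus 1-lambdamin} in place of \cref{thm:B-norm Esouth 1-lambdamin}, and that is what you carry out (your check of $T^{-1}$ via $Q_B\widehat M Q_{\widehat M}=Q_B$ and $Q_BBQ_{\widehat M}=Q_{\widehat M}$ indeed needs only that $B$ and $\widehat M$ are HPD). You are also right about the range: since $K_{\widehat M,B}(P_*)=1/\lambda_{min}^+(\hatMinvB(I-\Pi_A))$ with $\lambda_{min}^+\in(0,1)$, one gets $K_{\widehat M,B}(P_*)\in(1,\infty)$, so the stated interval $(0,1)$, which the paper carries over from its proof of \cref{thm:B-norm Esouth 1-K}, is an error, and $K>1$ is exactly what makes $1-1/K\in(0,1)$.
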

\begin{proof}
    The proof is analogous to the proof of \cref{thm:B-norm Esouth 1-K} but we replace $\widehat X$ with $\widehat M$ and use \cref{thm:B-norm Eplus 1-lambdamin} instead of \cref{thm:B-norm Esouth 1-lambdamin}.
\end{proof}

\subsection{Comparison results}\label{subsec:comp}

In \cite{NabRoo2026a} we have established optimal transfer operators that yield the minimal $B$-norms of the error operator $\Eplus$ and $\Esouth$. Aside from choosing the optimal transfer operators, another idea to increase the speed of convergence of AMG is to increase the number of coarse variables $n_c$ to some $\tilde n_c \geq n_c$. The goal is that the increase of information on the coarse-grid gives a better approximation for the solution of the linear system and therefore speeds up the convergence. Clearly, this also means more computational cost and complexity for the error propagation matrix, but the hope is that this approach yields a smaller $B$-norm. 

Unfortunately, this is not the case in general. The following example demonstrates a situation where the $B$-norm grows when we increase the number of coarse variables. The example also works for both operators $\Eplus$ and $\Esouth$.

\begin{example}\label{exam:counterexample_comp}
    We consider $n=3$, $n_c = 1$, $\tilde n_c = 2$ and $\nu_1 = \nu_2 = 1$. Let 
    \begin{equation*}
        A = \diag\left(\frac14,\frac12,\frac34\right), \quad B = I_3 = M^{-1},
    \end{equation*}
    then we have
    \begin{equation*}
        \hatMinvB = \diag\left(\frac7{16}, \frac34, \frac{15}{16}\right), \quad I - \hatMinvB = \diag\left(\frac9{16}, \frac14, \frac{1}{16}\right).
    \end{equation*}
    Notice that $(M^{-1}A)^+ = (M^{-1}A)^H = M^{-1}A$ and thus $M^{-1}A$ is $I_3$-orthogonal. Hence it holds $E_+^{1,1} = E^{1,1}$. We choose 
    \begin{equation*}
        P = R = \m{1\\1\\0}, \quad \text{and} \quad  \tilde P = \m{1&2\\1&1\\0&0}, \ \tilde R = \m{1&0\\1&1\\0&1}.
    \end{equation*}
    Now it holds $\calR(R) \subseteq \calR(\tilde R)$ and $\calR(P) \subseteq \calR(\tilde P)$, but
    \begin{align*}
        \norm{E^{1,1}(P,R)}_B^2 &= \lambda_{max}((I-\hatMinvB)(I-\Pi_A(P,R))^H(I-\hatMinvB)(I-\Pi_A(P,R)))
        \\ &= \lambda_{max}\left(\m{\frac{5}{32} & -\frac{5}{32} & 0 \\[1mm] -\frac{5}{72} & \frac{5}{72} & 0 \\[1mm] 0 &0 & \frac{1}{256} }\right) = \frac{65}{288} \approx 0.2257
    \end{align*}
    and
    \begin{align*} 
        \norm{E^{1,1}(\tilde P, \tilde R)}_B^2 &= \lambda_{max}((I-\hatMinvB)(I-\Pi_A(\tilde P,\tilde R))^H(I-\hatMinvB)(I-\Pi_A(\tilde P,\tilde R)))
        \\ &= \lambda_{max}\left(\m{0 & 0 & 0 \\ 0 & 0 & 0 \\ 0 &0 &\frac{91}{256} }\right) = \frac{91}{256} \approx 0.3555.
    \end{align*}
    This shows
    \begin{equation*}
        \norm{E^{1,1}(P,R)}_B^2 = \norm{E_+^{1,1}(P,R)}_B^2 = \frac{65}{288} \not\geq \norm{E^{1,1}(\tilde P, \tilde R)}_B^2 = \norm{E_+^{1,1}(\tilde P, \tilde R)}_B^2 = \frac{91}{256}.
    \end{equation*}
    Hence in this case we do not obtain a smaller $B$-norm when increasing the number of the coarse variables.
\end{example}

However, it is still possible to guarantee a smaller $B$-norm but with some additional assumptions. Note that in the example above, the projections 
\begin{equation*}
    \Pi_A(P,R) = \m{\frac{2}{3} & -\frac{2}{3} & 0 \\[0.2em] -\frac{1}{3} & \frac{1}{3} & 0 \\[0.2em] 0 & 0 & 1} \quad \text{ and } \quad  \Pi_A(\tilde P, \tilde R)= \m{0 & 0 & 3 \\[0.2em] 0 & 0 &-\frac{3}{2} \\[0.2em] 0 &0 & 1}    
\end{equation*}
are not $I_3$-orthogonal, that is Hermitian. However, the $B$-orthogonality of the projection $\Pi_A$ is exactly the additional assumption we need to guarantee a decrease of the $B$-norm and thereby faster convergence. To prove this we require a well known eigenvalue estimate for Hermitian matrices from \cite{HorJoh2013}.

\begin{lemma}\label{lem:eigenval inequality HornJohnson}
    Let $H_1, H_2 \in \Cnn$ be Hermitian. If $H_2$ is singular, then
    \begin{equation*}
        \lambda_i(H_1 + H_2) \leq \lambda_{i+\rank(H_2)}(H_1)
    \end{equation*}
    for all $i = 1, \dots, n-\rank(H_2)$.
\end{lemma}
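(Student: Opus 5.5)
We will prove the inequality directly from the Courant--Fischer min-max characterization. Throughout, the eigenvalues of a Hermitian matrix are ordered increasingly, $\lambda_1 \leq \dots \leq \lambda_n$, as in the rest of the paper. No definiteness of $H_2$ is needed: only its rank enters. The assumption that $H_2$ is singular simply guarantees $r := \rank(H_2) < n$, so the index range $i = 1, \dots, n-r$ is nonempty.

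\textbf{Step 1: min-max form.} For any Hermitian $H$ we use
\begin{displaymath}
    \lambda_i(H) = \min_{\dim \mathcal{S} = i} \ \max_{x \in \mathcal{S}\setminus\{0\}} \frac{x^H H x}{x^H x}.
\end{displaymath}
It therefore suffices to exhibit a single subspace $\mathcal{S}$ with $\dim \mathcal{S} \geq i$ on which the Rayleigh quotient of $H_1 + H_2$ is at most $\lambda_{i+r}(H_1)$. If $\dim \mathcal{S} > i$, we pass to any $i$-dimensional subspace of it; this only lowers the maximum.

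\textbf{Step 2: choice of the subspace.} Let $u_1, \dots, u_n$ be an orthonormal eigenbasis of $H_1$ corresponding to $\lambda_1(H_1) \leq \dots \leq \lambda_n(H_1)$. Set $\mathcal{U} = \operatorname{span}\{u_1, \dots, u_{i+r}\}$, which is well defined since $i + r \leq n$. Define
\begin{displaymath}
    \mathcal{S} = \mathcal{U} \cap \calN(H_2).
\end{displaymath}
Since $\dim \calN(H_2) = n - r$, the dimension formula gives
\begin{displaymath}
    \dim \mathcal{S} \geq (i+r) + (n-r) - n = i.
\end{displaymath}

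\textbf{Step 3: the estimate.} For $0 \neq x \in \mathcal{S}$ we have $H_2 x = 0$, hence
\begin{displaymath}
    x^H(H_1+H_2)x = x^H H_1 x \leq \lambda_{i+r}(H_1)\, x^H x,
\end{displaymath}
where the inequality holds because $x \in \mathcal{U}$. Combining this with Step 1 yields $\lambda_i(H_1+H_2) \leq \lambda_{i+r}(H_1)$.

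The only point requiring care is the dimension count in Step 2. This is where the rank of $H_2$, rather than any sign condition on $H_2$, controls the index shift.
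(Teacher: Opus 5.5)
Your proof is correct. The dimension count $\dim(\mathcal{U}\cap\calN(H_2))\ge (i+r)+(n-r)-n=i$ does the work: $H_2x=0$ on that intersection, so the Rayleigh quotient of $H_1+H_2$ there equals that of $H_1$, which is at most $\lambda_{i+r}(H_1)$. You are also right that no sign condition on $H_2$ is needed, and that singularity only makes the index range nonempty.

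The paper gives no proof of this lemma; it quotes the result from Horn and Johnson. Your Courant--Fischer argument is therefore the standard self-contained proof of the cited fact, not a competing route. An equally short alternative uses Weyl's inequality $\lambda_i(H_1+H_2)\le\lambda_{i+r}(H_1)+\lambda_{n-r}(H_2)$. Since $H_2$ has at least $n-r$ zero eigenvalues, it has at most $r$ positive ones, so $\lambda_{n-r}(H_2)\le 0$.

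Your version has one small advantage. It visibly uses only $\dim\calN(H_2)\ge n-r$, so it holds for every $r\ge\rank(H_2)$ with $i+r\le n$. That is exactly the form needed in the comparison theorem, where only $\rank(D)\le\tilde n_c-n_c$ is known. There the paper first passes through $\lambda_{n_c+1+\rank(D)}$ and then needs a separate monotonicity step in the index.
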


We can now prove the desired result for the operator $\Esouth$.

\begin{theorem}\label{thm:comp Esouth}
    Let the smoothing assumption be satisfied, $M^{-1}A$ be $B$-normal and $I-M^{-1}A$ be nonsingular. Let $\nu \in \N$ and $\tilde n_c \in \N$ with $n_c \leq \tilde n_c < n$. Further, let $P,R \in \Cnnc$ and $\tilde P, \tilde R \in \C^{n \times \tilde n_c}$ be such that $R^HAP$ and $\tilde R^HA \tilde P$ are nonsingular and $\Pi_A(P,R)$, $\Pi_A(\tilde P, \tilde R)$ are $B$-orthogonal. If $\calR(R) \subseteq \calR(\tilde R)$ or $\calR(P) \subseteq \calR(\tilde P)$ is fulfilled, then it holds
    \begin{equation*}
        \norm{E^{\nu_1, \nu_2}(\tilde P,\tilde R)}_B \leq \norm{E^{\nu_1, \nu_2}(P,R)}_B
    \end{equation*}
    for any combination $(\nu_1, \nu_2) \in \{(\nu, 0),(0,\nu),(\nu,\nu)\}$.
\end{theorem}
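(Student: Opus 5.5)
By \cref{thm:properties Esouth MinvA B-normal}, all three norms are governed by the single quantity $\lambda_{max}((I-\hatMinvB)^\nu(I-\Pi_A))$: the $(\nu,\nu)$ norm equals it, and the $(\nu,0)$ and $(0,\nu)$ norms equal its square root. So it suffices to show
\begin{equation*}
    \lambda_{max}\big((I-\hatMinvB)^\nu(I-\Pi_A(\tilde P,\tilde R))\big) \leq \lambda_{max}\big((I-\hatMinvB)^\nu(I-\Pi_A(P,R))\big).
\end{equation*}
Write $\Pi:=\Pi_A(P,R)$, $\tilde\Pi:=\Pi_A(\tilde P,\tilde R)$ and $S:=(I-\hatMinvB)^\nu$.

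The first step is to show $\calR(\Pi)\subseteq\calR(\tilde\Pi)$. If $\calR(P)\subseteq\calR(\tilde P)$, this is immediate since $\calR(\Pi)=\calR(P)$. If instead $\calR(R)\subseteq\calR(\tilde R)$, I use \cref{lem:equiv_cond Pi B-ortho}~(\labelcref{lem:enum:Pi B-ortho equal ranges BatNab}): $\calR(P)=\calR(B^{-1}A^HR)\subseteq\calR(B^{-1}A^H\tilde R)=\calR(\tilde P)$. Both projections are $B$-orthogonal, so $\tilde\Pi\Pi=\Pi$ and, taking $B$-adjoints, $\Pi\tilde\Pi=\Pi$. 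Hence $I-\tilde\Pi=(I-\Pi)(I-\tilde\Pi)(I-\Pi)$, and $(I-\Pi)-(I-\tilde\Pi)=\tilde\Pi-\Pi$ is a $B$-orthogonal projection.

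Next I symmetrize. By \cref{lem:smoothingstepreduction}, $S$ is $B$-orthogonal with $\sigma(S)\subseteq(0,1)$, so $S$ has a $B$-orthogonal square root $S^{1/2}$ (a polynomial in $S$, via the $B$-unitary diagonalization). Then
\begin{equation*}
    \lambda_{max}(S(I-\Pi))=\lambda_{max}\big((I-\Pi)S(I-\Pi)\big)=\lambda_{max}\big(S^{1/2}(I-\Pi)S^{1/2}\big),
\end{equation*}
where the first equality uses that $\sigma(XY)$ and $\sigma(YX)$ agree up to zeros together with $(I-\Pi)^2=I-\Pi$. All operators involved are $B$-orthogonal, so after conjugation with $B^{1/2}$ they become Hermitian and the Courant–Fischer theorem applies. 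Since $\tilde\Pi-\Pi$ is $B$-positive semidefinite, $S^{1/2}(I-\tilde\Pi)S^{1/2}=S^{1/2}(I-\Pi)S^{1/2}-S^{1/2}(\tilde\Pi-\Pi)S^{1/2}$ is obtained by subtracting a positive semidefinite operator. This gives $\lambda_{max}$ monotonicity directly. Alternatively, one can apply \cref{lem:eigenval inequality HornJohnson} with the negative semidefinite perturbation $H_2=-B^{1/2}S^{1/2}(\tilde\Pi-\Pi)S^{1/2}B^{-1/2}$, of rank $\tilde n_c-n_c$.

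The main obstacle is the case where only $\calR(R)\subseteq\calR(\tilde R)$ is given, because there the nesting of the coarse spaces has to be transferred to the interpolation side. The displayed identity $\calR(P)=\calR(B^{-1}A^HR)$ from \cref{lem:equiv_cond Pi B-ortho} should handle this, and I would verify that this inclusion and the nesting relations $\tilde\Pi\Pi=\Pi\tilde\Pi=\Pi$ are justified before doing the eigenvalue comparison. Finally, taking square roots gives the claimed inequality for the $(\nu,0)$ and $(0,\nu)$ cases.
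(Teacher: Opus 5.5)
Your argument is correct, but it takes a different route from the paper.

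\textbf{The paper's route.} It uses the characterization $\norm{E^{\nu,\nu}}_B = 1-\lambda_{n_c+1}(\widehat X^{-1}B(I-\Pi_A))$ from \cref{thm:B-norm Esouth 1-lambdamin} and writes $\Pi_A(P,R)=\Pi_B(P,P)$. It then proves that the Hermitian difference $D=\tilde P(\tilde P^HB\tilde P)^{-1}\tilde P^H-P(P^HBP)^{-1}P^H$ has rank at most $\tilde n_c-n_c$, by exhibiting $n-\tilde n_c+n_c$ independent kernel vectors. Finally, the rank-perturbation interlacing of \cref{lem:eigenval inequality HornJohnson} moves the index from $n_c+1$ to $\tilde n_c+1$ at the bottom of the spectrum.

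\textbf{Your route.} You stay with the $\lambda_{max}$ form of \cref{thm:properties Esouth MinvA B-normal} and use the Loewner order instead. Nestedness plus $B$-orthogonality make $\tilde\Pi-\Pi$ a $B$-orthogonal projection, hence $B$-positive semidefinite, and $\lambda_{max}$ is monotone under such perturbations.

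\textbf{What each route buys.} Yours avoids the rank count and the index bookkeeping, and it needs fewer hypotheses. The matrix $I-\hatMinvB=(I-M^{-1}A)(I-M^{-1}A)^+$ is $B$-positive semidefinite for any smoother, so $S^{1/2}$ exists without \cref{lem:smoothingstepreduction}. Consequently neither the smoothing assumption nor the nonsingularity of $I-M^{-1}A$ is actually used. In the paper these hypotheses are needed only to reach the $1-\lambda_{min}^+$ form, whose advantage is the direct link to the $K_{\widehat X,B}$ characterization of \cref{thm:B-norm Esouth 1-K}.

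\textbf{A shorter proof.} Your own identity $I-\tilde\Pi=(I-\tilde\Pi)(I-\Pi)$ gives a one-line argument:
$\norm{E^{\nu,0}(\tilde P,\tilde R)}_B=\norm{(I-\tilde\Pi)(I-\Pi)(I-M^{-1}A)^{\nu}}_B\le\norm{I-\tilde\Pi}_B\,\norm{E^{\nu,0}(P,R)}_B\le\norm{E^{\nu,0}(P,R)}_B$,
since $\norm{I-\tilde\Pi}_B=1$. The other two cases then follow from \cref{thm:properties Esouth MinvA B-normal}.

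\textbf{A correction.} Your stated alternative via \cref{lem:eigenval inequality HornJohnson} does not work as written. That lemma bounds $\lambda_i(H_1+H_2)$ only for $i\le n-\rank(H_2)$, so for $\tilde n_c>n_c$ it never reaches $i=n$, i.e.\ $\lambda_{max}$. The index shift is useful at the bottom of the spectrum, which is why the paper applies it to $\lambda_{n_c+1}$. At the top of the spectrum the right tool is Weyl's monotonicity for a semidefinite perturbation, which is exactly your main argument.
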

\begin{proof}
    Let the above assumptions be satisfied. Since $\Pi_A(P,R)$ and $\Pi_A(\tilde P, \tilde R)$ are $B$-orthogonal, it follows from \cref{thm:properties Esouth MinvA B-normal} that
    \begin{equation*}
        \norm{E^{\nu,\nu}}_B = \norm{E^{\nu,0}}_B^2 = \norm{E^{0,\nu}}_B^2.
    \end{equation*}
    This is particularly true for $(P,R)$ and $(\tilde P, \tilde R)$. Hence, it is sufficient to show the desired inequality only for the combination $(\nu_1, \nu_2) = (\nu,\nu)$. By \cref{thm:B-norm Esouth 1-lambdamin} we know that it holds
    \begin{equation*}
        \norm{E^{\nu,\nu}(P,R)}_B = 1 - \lambda_{min}^+(\widehat{X}^{-1}B(I - \Pi_A(P,R)))
    \end{equation*}
    and the eigenvalues of $\widehat{X}^{-1}B(I-\Pi_A(P,R))$ are real. 
    Assume that the eigenvalues are ordered increasingly so that $\lambda_i(\widehat{X}^{-1}B(I - \Pi_A(P,R)))$ is the $i$-th smallest eigenvalue of $\widehat{X}^{-1}B(I - \Pi_A(P,R))$. Since $\dim(\calN(\Pi_A(P,R))) = n_c$, the matrix has $n_c$ zero eigenvalues. Using \cref{thm:B-norm Esouth 1-lambdamin} again, it follows
    \begin{equation*}
        \norm{E^{\nu,\nu}(P,R)}_B = 1 - \lambda_{min}^+(\widehat{X}^{-1}B(I - \Pi_A(P,R))) = 1- \lambda_{n_c+1}(\widehat{X}^{-1}B(I - \Pi_A(P,R))).
    \end{equation*}
    Since $\Pi_A(P,R)$ is $B$-orthogonal, by \cref{lem:equiv_cond Pi B-ortho} we have $P = B^{-1}A^HRS$ for some nonsingular matrix $S \in \Cncnc$. Then a straightforward computation shows $\Pi_A(P,R) = \Pi_B(P,P) = P(P^HBP)^{-1}P^HB$ and
    \begin{equation*}
        \norm{E^{\nu,\nu}(P,R)}_B = 1- \lambda_{n_c+1}(\widehat{X}^{-1}B(I - \Pi_A(P,R))) = 1- \lambda_{n_c+1}(\widehat{X}^{-1}B(I - \Pi_B(P,P))).
    \end{equation*}
    Analogously, we can apply the same argument to $\norm{E^{\nu,\nu}(\tilde P, \tilde R)}_B$ and obtain
    \begin{equation*}
        \norm{E^{\nu,\nu}(\tilde P,\tilde R)}_B = 1- \lambda_{\tilde n_c+1}(\widehat{X}^{-1}B(I - \Pi_A(\tilde P,\tilde R))) = 1- \lambda_{\tilde n_c+1}(\widehat{X}^{-1}B(I - \Pi_B(\tilde P,\tilde P))).
    \end{equation*}
    Notice that $\widehat{X}^{-1}B(I - \Pi_B(\tilde P,\tilde P))$ has $\tilde n_c$ zero eigenvalues. We now define the matrix
    \begin{equation*}
        D = \tilde P (\tilde P^H B\tilde P)^{-1}\tilde P^H - P(P^HBP)^{-1}P^H \in \Cnn
    \end{equation*}
    and show that this matrix has rank smaller than $\tilde n_c - n_c < n$. First, notice that
    \begin{equation*}
        DB = \Pi_B(\tilde P, \tilde P) - \Pi_B(P,P) = \Pi_A(\tilde P, \tilde R) - \Pi_A(P,R).
    \end{equation*}
    If $\calR(R) \subseteq \calR(\tilde R)$, the $B$-orthogonality of $\Pi_A(P,R)$ and \cref{lem:equiv_cond Pi B-ortho} imply
    \begin{equation*}
        \calR(P) = \calR(B^{-1}A^HR) \subseteq \calR(B^{-1}A^H \tilde R) = \calR(\tilde P).
    \end{equation*}
    This means that for $\calR(R) \subseteq \calR(\tilde R)$ or $\calR(P) \subseteq \calR(\tilde P)$, it always holds $\calR(P) \subseteq \calR(\tilde P)$ and thus also $\calR(\tilde P)^{\bot_B} \subseteq \calR(P)^{\bot_B}$. Let $\{w_1, \dots, w_{n-\tilde n_c}\}$ be a basis of $\calR(\tilde P)^{\bot_B}$, then we get $\tilde P^HBw_i = 0$ and $P^HBw_i = 0$ for all $i = 1, \dots, n-\tilde n_c$. It follows 
    \begin{equation*}
        DBw_i = \tilde P (\tilde P^H B\tilde P)^{-1}\tilde P^HBw_i - P(P^HBP)^{-1}P^HBw_i = 0 -0 = 0
    \end{equation*}
    for all $i = 1, \dots, n- \tilde n_c$. Now letting $\{v_1, \dots, v_{n_c}\}$ be a basis of $\calR(P)$, it holds $v_i = Pu_i$ and $v_i = \tilde P \tilde u_i$ for some vectors $u_i \in \C^{n_c}$ and $\tilde u_i \in \C^{\tilde n_c}$ for all $i = 1, \dots, n_c$. It follows 
    \begin{align*}
        DBv_i &= \tilde P (\tilde P^H B\tilde P)^{-1}\tilde P^HBv_i - P(P^HBP)^{-1}P^HBv_i
        \\ &= \tilde P (\tilde P^H B\tilde P)^{-1}\tilde P^HB \tilde P \tilde u_i - P(P^HBP)^{-1}P^HB Pu_i = v_i -v_i = 0
    \end{align*}
    for all $i = 1, \dots, n_c$. Furthermore, for all $i = 1, \dots, n_c$ and $j = 1, \dots, n-\tilde n_c$ we have 
    \begin{equation*}
        \<v_i,w_j \>_B = v_i^HBw_i = u_i^HP^HBw_j = 0
    \end{equation*}
    and hence $v_1, \dots, v_{n_c}, w_1, \dots, w_{n-\tilde n_c}$ are linearly independent. Then
    \begin{equation*}
        Bv_1, \dots, Bv_{n_c}, Bw_1, \dots, Bw_{n-\tilde n_c}
    \end{equation*}
    are also linearly independent and are contained in the kernel of $D$. In conclusion, $\dim(\calN(D)) \geq n- \tilde n_c + n_c$ and 
    \begin{equation*}
        \rank(D) = \dim(\calR(D)) = n- \dim(\calN(D)) \leq n - (n- \tilde n_c + n_c) = \tilde n_c - n_c.
    \end{equation*}
    Therefore, $D$ is singular and by construction also Hermitian. We obtain
    \begin{align*}
        \norm{E^{\nu, \nu}(&P,R)}_B = 1 - \lambda_{n_c+1} \big(  \widehat X^{-1}B(I-\Pi_B(P,P)) \big)
        \\ &= 1 - \lambda_{n_c+1} \big(  \widehat X^{-\frac12}B(B^{-1}-P(P^HBP)^{-1}P^H)B \widehat X^{-\frac12}\big)
        \\ &= 1 - \lambda_{n_c+1} \big(  \widehat X^{-\frac12}B(B^{-1}-\tilde P(\tilde P^HB \tilde P)^{-1}\tilde P^H + D)B \widehat X^{-\frac12}\big)
        \\ &= 1 - \lambda_{n_c+1} \big(  \widehat X^{-\frac12}B(B^{-1}-\tilde P(\tilde P^HB \tilde P)^{-1}\tilde P^H)B \widehat X^{-\frac12} 
        + \widehat X^{-\frac12}BDB \widehat X^{-\frac12}\big)
    \end{align*}
    where we used that $\widehat X^{-1}$ is HPD (cf. \cref{thm:B-norm Esouth 1-lambdamin}). Both of the matrices of the sum in the last equation are Hermitian and $\widehat X^{-\frac12}BDB \widehat X^{-\frac12}$ is singular with rank smaller than $\tilde n_c - n_c$. Finally, by \cref{lem:eigenval inequality HornJohnson} it holds
    \begin{align*}
        \norm{E^{\nu, \nu}(&P,R)}_B = 1 - \lambda_{n_c+1} \big(  \widehat X^{-\frac12}B(B^{-1}-\tilde P(\tilde P^HB \tilde P)^{-1}\tilde P^H + D)B \widehat X^{-\frac12}\big)
        \\ &=1 - \lambda_{n_c+1} \big(  \widehat X^{-\frac12}B(B^{-1}-\tilde P(\tilde P^HB \tilde P)^{-1}\tilde P^H)B \widehat X^{-\frac12} + \widehat X^{-\frac12}BDB \widehat X^{-\frac12}\big)
        \\ &\geq 1 - \lambda_{n_c+1 +\rank(D)} \big(  \widehat X^{-\frac12}B(B^{-1}-\tilde P(\tilde P^HB \tilde P)^{-1}\tilde P^H)B \widehat X^{-\frac12}\big)
        \\ &\geq 1 - \lambda_{n_c+1 +(\tilde n_c - n_c)} \big(  \widehat X^{-\frac12}B(B^{-1}-\tilde P(\tilde P^HB \tilde P)^{-1}\tilde P^H)B \widehat X^{-\frac12}\big)
        \\ &= 1 - \lambda_{\tilde n_c+1} \big(  \widehat X^{-1}B(I-\tilde P(\tilde P^HB \tilde P)^{-1}\tilde P^HB)\big)
        \\ &= 1 - \lambda_{\tilde n_c+1} \big(  \widehat X^{-1}B(I-\Pi_B(\tilde P, \tilde P))\big) = \norm{E^{\nu,\nu}(\tilde P, \tilde R)}_B.
    \end{align*}
\end{proof}

\begin{remark}
    Note that we require $\Pi_A$ to be $B$-orthogonal. Without this assumptions, we do not achieve such a result as \cref{exam:counterexample_comp} showed. Since $M^{-1}A$ is $B$-normal and $\Pi_A$ is $B$-orthogonal, we can only consider certain choices of $B$. The possible choices for $B$ were already discussed in \cref{rem:B-normal and B-ortho}.
\end{remark}

The result also holds for the operator $E_+^{1,1}$ with an identical proof. 

\begin{theorem}\label{thm:comp Eplus}
    Let the smoothing assumption be satisfied, $I-\hatMinvB$ be nonsingular and $\tilde n_c \in \N$ with $n_c \leq \tilde n_c < n$. Further, let $P,R \in \Cnnc$ and $\tilde P, \tilde R \in \C^{n \times \tilde n_c}$ be such that $R^HAP$ and $\tilde R^HA \tilde P$ are nonsingular and $\Pi_A(P,R)$, $\Pi_A(\tilde P, \tilde R)$ are $B$-orthogonal. If $\calR(R) \subseteq \calR(\tilde R)$ or $\calR(P) \subseteq \calR(\tilde P)$ is fulfilled, then it holds
    \begin{equation*}
        \norm{E_+^{\nu_1, \nu_2}(\tilde P,\tilde R)}_B \leq \norm{E_+^{\nu_1,\nu_2}(P,R)}_B
    \end{equation*}
    for any combination $(\nu_1, \nu_2) \in \{(1,0),(0,1),(1,1) \}$.
\end{theorem}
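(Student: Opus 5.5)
The plan is to copy the proof of \cref{thm:comp Esouth} essentially line by line, replacing $\widehat X^{-1}B$ with $\hatMinvB$ and replacing \cref{thm:B-norm Esouth 1-lambdamin} with \cref{thm:B-norm Eplus 1-lambdamin}. The $B$-normality of $M^{-1}A$ is only used in \cref{thm:comp Esouth} to merge several smoothing steps into one. Here only one smoothing step occurs, so that assumption is not needed.

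\textbf{Reduction to $(\nu_1,\nu_2)=(1,1)$.} By \cref{thm:properties Eplus Esouth}~(\labelcref{thm:enum:property norm Eplus split}), applied to both $(P,R)$ and $(\tilde P,\tilde R)$ (both projections are $B$-orthogonal), we have $\norm{E_+^{1,1}}_B=\norm{E_+^{1,0}}_B^2=\norm{E_+^{0,1}}_B^2$. It therefore suffices to treat $(1,1)$.

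\textbf{Eigenvalue characterization.} \Cref{thm:B-norm Eplus 1-lambdamin} gives
\begin{displaymath}
\norm{E_+^{1,1}(P,R)}_B = 1-\lambda_{n_c+1}\big(\hatMinvB(I-\Pi_A(P,R))\big),
\end{displaymath}
with real eigenvalues in $[0,1)$ and exactly $n_c$ zeros. The analogous formula holds with index $\tilde n_c+1$ for $(\tilde P,\tilde R)$.

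\textbf{Symmetric form.} The smoothing assumption together with \cref{thm:smoothing_assumpt_BatNab} shows that $\widehat M^{-1}$ is HPD. As in the proof of \cref{thm:comp Esouth}, $B$-orthogonality and \cref{lem:equiv_cond Pi B-ortho} give $\Pi_A(P,R)=\Pi_B(P,P)$. Hence $\hatMinvB(I-\Pi_A(P,R))$ is similar to the Hermitian matrix
\begin{displaymath}
\widehat M^{-\frac12}B\big(B^{-1}-P(P^HBP)^{-1}P^H\big)B\widehat M^{-\frac12},
\end{displaymath}
and the same holds with tildes.

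\textbf{Rank-deficient difference.} Set $D=\tilde P(\tilde P^HB\tilde P)^{-1}\tilde P^H-P(P^HBP)^{-1}P^H$. Either inclusion hypothesis yields $\calR(P)\subseteq\calR(\tilde P)$; if $\calR(R)\subseteq\calR(\tilde R)$, use $\calR(P)=\calR(B^{-1}A^HR)$. Exactly as before, $D$ is Hermitian, and $B\calR(P)$ and $B\calR(\tilde P)^{\bot_B}$ both lie in $\calN(D)$, so $\rank(D)\le\tilde n_c-n_c$. Writing the untilded Hermitian matrix as the tilded one plus $\widehat M^{-\frac12}BDB\widehat M^{-\frac12}$ and applying \cref{lem:eigenval inequality HornJohnson} gives $\lambda_{n_c+1}(\text{untilded})\le\lambda_{\tilde n_c+1}(\text{tilded})$, which is the claimed inequality.

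\textbf{Edge cases.} The rank lemma assumes $H_2$ singular. If $\rank(D)=0$, the inequality follows directly since $\lambda_{n_c+1}\le\lambda_{\tilde n_c+1}$. The nondegeneracy hypotheses of \cref{thm:B-norm Eplus 1-lambdamin} ($\Pi_A\neq 0,I_n$) hold because $0<n_c\le\tilde n_c<n$. If $n_c=0$ is allowed, the formula reduces to the largest eigenvalue and is handled the same way.

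\textbf{Main obstacle.} The only real check is the reduction to one smoothing step. The identity $(I-(M^{-1}A)^+)(I-\Pi_A)^+(I-\Pi_A)(I-M^{-1}A)$, combined with $(I-\Pi_A)^+(I-\Pi_A)=I-\Pi_A$ and a cyclic similarity, gives $\lambda_{max}\big((I-\hatMinvB)(I-\Pi_A)\big)$ without any normality of $M^{-1}A$. This is exactly why the statement is restricted to $\nu=1$.
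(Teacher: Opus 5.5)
Your proposal is correct and is essentially the paper's own proof. The paper likewise reruns the argument of \cref{thm:comp Esouth} with $\widehat X^{-1}=\widehat M^{-1}$, uses \cref{thm:B-norm Eplus 1-lambdamin} in place of \cref{thm:B-norm Esouth 1-lambdamin}, and the reduction to $(1,1)$ then rests on \cref{thm:properties Eplus Esouth} as you use it. Your separate treatment of $\rank(D)=0$ is harmless but unnecessary, since the zero matrix is singular and \cref{lem:eigenval inequality HornJohnson} applies directly with rank zero.
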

\begin{proof}
    The proof is analogous to the proof of \cref{thm:comp Esouth} with the small changes that uses of \cref{thm:B-norm Esouth 1-lambdamin} are replaced by using \cref{thm:B-norm Eplus 1-lambdamin} and $\widehat X^{-1} = \widehat M^{-1}$ since $\nu_1 = \nu_2 = 1$. 
\end{proof}

This again demonstrates that the operator $E_+^{1,1}$ achieves the same results as $E^{\nu,\nu}$ without the additional assumption that $M^{-1}A$ is $B$-normal.

\section{Convergence bounds for V-cycle multigrid} \label{sec:MG}

So far, here and in \cite{NabRoo2026a} we considered the  error propagation matrices $\Eplus$ \cref{eq:Eplus} and $\Esouth$ \cref{eq:Esouth} to study algebraic two-grid methods. Now, we investigate V-cycle AMG methods and extend McCormick's landmark V-cycle bound for HPD matrices \cite{McC1984, McC1985} to nonsymmetric indefinite problems. It is known from \cite{NapNot2010a} that McCormick's bound is the sharpest bound one can obtain for HPD matrices.

We consider $L \in \N$ coarse spaces. Here, $k = 0$ is the finest level, while $k = L$ is the coarsest level. Accordingly, we set $n = n_0 > n_1 > \dots > n_L$ with $n_i \in \N$. Given an arbitrary HPD matrix $B \in \Cnn$ we define $A_0 =A \in \C^{n_0 \times n_0}$ and $B_0 = B \in \C^{n_0 \times n_0}$. In contrast to the previous results, we do not discuss the $B$-orthogonality of the coarse-grid correction as a necessary condition for any convergence result but simply assume that the coarse-grid correction is orthogonal with respect to some inner product at each level of the multilevel algorithm. Thus, we start with a full rank restriction operator $R_0 \in \C^{n_0 \times n_1}$ or interpolation operator $P_0 \in \C^{n_0 \times n_1}$ and employ a series of operators for $k = 0, \dots, L-1$ as follows
\begin{align}
    &\text{choose } R_k, P_k \in \C^{n_k \times n_{k+1}} \text{ such that } \calR(P_k) = \calR(B_k^{-1}A_k^H R_k) \nonumber 
    \\ & A_{k+1} = R_k^HA_k P_k \in \C^{n_{k+1} \times n_{k+1}} \label{eq:multilevel V-cycle}
    \\ & B_{k+1} = P_k^H B_k P_k \in \C^{n_{k+1} \times n_{k+1}}. \nonumber
\end{align}

We know by \cref{lem:equiv_cond Pi B-ortho} that this procedure yields compatible transfer operators $R_k$ and $P_k$ and therefore results in a $B_k$-orthogonal coarse-grid correction $\Pi_{A_k}(P_k,R_k)$. For every level $k = 0, \dots, L$ it holds $P_k = B_k^{-1}A_k^{H}R_kS_k$ for some nonsingular matrix $S_k \in \C^{n_{k+1} \times n_{k+1}}$ and hence
\begin{equation}\label{eq:relation_Bk_Ak}
    A_{k+1} = R_k^HA_kP_k =  S_k^{-H} P_k^HB_kA_k^{-1}A_kP_k = S_k^{-H}P_k^HB_kP_k = S_k^{-H} B_{k+1}.
\end{equation}
The matrix $P_k^HB_kP_k$ is HPD and since $S_k$ is nonsingular, so it $A_{k+1}$. Therefore, for $k = 0, \dots, L-1$ we have
\begin{align*}
    \Pi_{A_k}(P_k, R_k) &= P_kA_{k+1}^{-1}R^H_kA_k = P_k (S_k^{-H}P_k^HB_kP_k)^{-1} S_k^{-H}P_k^HB_kA_k^{-1}A_k 
    \\ &= P_k (P_k^HB_kP_k)^{-1}P_k^H B_k = \Pi_{B_k}(P_k,P_k)
\end{align*}
which is a $B_k$-orthogonal projection onto $\calR(P_k)$ along $\calR(P_k)^{\bot_{B_k}}$. 

The above multilevel procedure basically symmetrizes the coarse-grid matrix $A_k$ in every step. If at some step $k$ we have $P_k = B_k^{-1}A_k^HR_k$, meaning the change of basis matrix $S_k$ satisfies $S_k = I_{n_{k+1}}$, then $A_{k+1} = B_{k+1}$ and we are in the classical HPD setting. For any subsequent level $k$ we can choose $P_k = R_k$ and the remaining multilevel steps are simply the classical HPD V-cycle steps. This is particularly interesting for the first step, that is $k = 0$. If $P_0$ or $R_0$ is given and the other is chosen such that $P_0 = B_0^{-1}A_0^HR_0 = B^{-1}A^HR_0$, then $A_1 = B_1$ is HPD and in the following levels $k$ we can set $P_k = R_k$ and obtain the classical HPD V-cycle. In this case, the techniques in \cref{eq:multilevel V-cycle} are not necessary at all. However, to demonstrate the proof strategies and to illustrate the idea for the nonsymmetric multilevel V-cycle algorithm, we still include these techniques in our analysis.   

We also choose nonsingular smoothers $M_k^{-1} \in \C^{n_k \times n_k}$ for every level $k = 0, \dots, L-1$ such that the smoothing property 
\begin{equation}\label{eq:Vcycle smoothing_assumpt level-wise}
    \norm{I-M^{-1}_kA_k}_{B_k}<1 
\end{equation}
is satisfied at each level $k$. With this we can define the level-wise error propagation matrices for both error operators in a recursive manner starting with $E_{L,+}^{\nu_1, \nu_2} = 0 = E_{L}^{\nu_1, \nu_2} \in \C^{n_L \times n_L}$ and for $k= L-1, \dots ,0$ we set
\begin{align}
    E_{k,+}^{\nu_1, \nu_2} &= (I-(M^{-1}_k A_k)^+)^{\nu_2}(I-P_{k}(I-E_{k+1,+}^{\nu_1, \nu_2})A_{k+1}^{-1}R_k^HA_k)(I-M^{-1}_k A_k)^{\nu_1}\label{eq:Eplus levelwise} \\
    E_{k}^{\nu_1, \nu_2} &= (I-M^{-1}_k A_k)^{\nu_2}(I-P_{k}(I-E_{k+1}^{\nu_1, \nu_2})A_{k+1}^{-1}R_k^HA_k)(I-M^{-1}_k A_k)^{\nu_1} \label{eq:Esouth levelweise}
\end{align}
where the adjoint is taken with respect to $B_k$. We have $E_{k,+}^{\nu_1, \nu_2}, E_{k}^{\nu_1, \nu_2} \in \C^{n_k \times n_k}$ on every level $k$. If a smoothing step $M_k^{-1}A_k$ is $B_k$-orthogonal, the two operators again coincide on level $k$. Notice that we always assume $\Pi_{A_k}$ to be $B_k$-orthogonal. 

For the classic HPD case with $B_k = A_k$ HPD it holds $P_k = R_k$ and the level-wise operator $E_{k,+}^{\nu_1,\nu_2}$ reduces to the well known level-wise error operator of the HPD case. This immediately shows that $E_{k,+}^{\nu_1,\nu_2}$ again is the natural generalization of the error operator in the HPD case. 

Similar to the approach and notation in \cite{McC1984, McC1985}, we decompose the level-wise error propagators $ E_{k,+}^{\nu_1, \nu_2}$ into fine-to-coarse-cycles and coarse-to-fine-cycles. Starting with the coarsest level, $E_{L,+}\backslash^{\nu_1} \in \C^{n_L \times n_L}$ denote the fine-to-coarse-cycle and $E_{L,+}\slash^{\nu_2} \in \C^{n_L \times n_L}$ denote the coarse-to-fine-cycle. We set $E_{L,+}\backslash^{\nu_1} = 0 = E_{L,+}\slash^{\nu_2}$ and for $k = L-1, \dots, 0$ define
\begin{align*}\label{eq:def_Ekslashp_Ekbackp}
    \Ekslashpnu &= (I-(M^{-1}_k A_k)^+)^{\nu_2}(I-P_{k}(I-\Ekoneslashpnu)A_{k+1}^{-1}R_k^HA_k) \in \C^{n_k \times n_k},
    \\ \Ekbackpnu &= (I-P_{k}(I-\Ekonebackpnu)A_{k+1}^{-1} R_k^H A_k)(I-M^{-1}_kA_k)^{\nu_1}\in \C^{n_k \times n_k}.
\end{align*}

Now, we have a fundamental lemma showing properties similar to those of the two-grid case as in \cref{thm:properties Eplus Esouth}.

\begin{lemma}\label{lem:prop_Vcycle_error}
    For $k = L, \ldots, 0$ we have
    \begin{enumerate}[1)]
        \item $ E_{k,+}^{\nu_1, \nu_2} = \Ekslashpnu \cdot \Ekbackpnu$.
        \item For any $\nu \in \N$ it holds $(E_{k,+}^{\nu}/)^+ = E_{k,+}^\nu\backslash$ where the adjoint is with respect to $B_k$.
        \item For $\nu_1 = \nu_2 = \nu$ it holds $\norm{E_{k,+}^{\nu, \nu}}_{B_k} = \norm{E_{k,+}^{\nu}/}_{B_k}^2 = \norm{E_{k,+}^\nu\backslash}_{B_k}^2$. \label{lem:enum:norm_eq Esouth}
    \end{enumerate}
\end{lemma}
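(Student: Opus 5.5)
All three claims will be proved together by downward induction on $k$, from $k=L$ to $k=0$. Throughout, $^+$ on level $k$ means the $B_k$-adjoint $C^+=B_k^{-1}C^HB_k$. The standing facts from \cref{eq:multilevel V-cycle} and \cref{eq:relation_Bk_Ak} are the ones used at every step: $A_{k+1}=S_k^{-H}B_{k+1}$, $R_k^HA_k=S_k^{-H}P_k^HB_k$, and hence
\begin{displaymath}
    P_kA_{k+1}^{-1}R_k^HA_k = P_kB_{k+1}^{-1}P_k^HB_k,
\end{displaymath}
and more generally $P_k Z A_{k+1}^{-1}R_k^HA_k = P_k Z B_{k+1}^{-1}P_k^HB_k$ for any $Z\in\C^{n_{k+1}\times n_{k+1}}$. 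So every coarse-grid term can be written as $P_k(\cdot)B_{k+1}^{-1}P_k^HB_k$.

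\textbf{Base case $k=L$.} All operators are $0$, so 1) and 2) are trivial. For 3) we get $0=0^2$.

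\textbf{Claim 1).} Assume 1) holds on level $k+1$. Write $G_k(Z)=I-P_k(I-Z)B_{k+1}^{-1}P_k^HB_k$. Then
\begin{displaymath}
    E_{k,+}^{\nu_1,\nu_2}=(I-(M_k^{-1}A_k)^+)^{\nu_2}\,G_k(E_{k+1,+}^{\nu_1,\nu_2})\,(I-M_k^{-1}A_k)^{\nu_1},
\end{displaymath}
while
\begin{displaymath}
    E_{k,+}^{\nu_2}/\cdot E_{k,+}^{\nu_1}\backslash=(I-(M_k^{-1}A_k)^+)^{\nu_2}\,G_k(E_{k+1,+}^{\nu_2}/)\,G_k(E_{k+1,+}^{\nu_1}\backslash)\,(I-M_k^{-1}A_k)^{\nu_1}.
\end{displaymath}
So it suffices to show $G_k(Z_1)G_k(Z_2)=G_k(Z_1Z_2)$. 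Expanding, and using $P_k^HB_kP_k=B_{k+1}$, the middle factor becomes $B_{k+1}^{-1}P_k^HB_kP_kB_{k+1}^{-1}=B_{k+1}^{-1}$. The product is then
\begin{displaymath}
    I-P_k\bigl[(I-Z_1)+(I-Z_2)-(I-Z_1)(I-Z_2)\bigr]B_{k+1}^{-1}P_k^HB_k = I-P_k(I-Z_1Z_2)B_{k+1}^{-1}P_k^HB_k .
\end{displaymath}
With $Z_1Z_2=E_{k+1,+}^{\nu_1,\nu_2}$ by the induction hypothesis, 1) follows. This is the multilevel analogue of the identity $(I-\Pi)^2=I-\Pi$; it is where the $B_k$-orthogonal form of the coarse correction is essential.

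\textbf{Claim 2).} Assume $(E_{k+1,+}^{\nu}/)^+=E_{k+1,+}^{\nu}\backslash$, with the adjoint taken with respect to $B_{k+1}$. Compute $(E_{k,+}^\nu/)^+=G_k(E_{k+1,+}^\nu/)^+\,(I-M_k^{-1}A_k)^{\nu}$, using $((C^+)^{\nu})^+=C^\nu$. Next,
\begin{displaymath}
    G_k(Z)^+=I-B_k^{-1}B_kP_kB_{k+1}^{-1}(I-Z)^HP_k^HB_k=I-P_k(I-Z^{+})B_{k+1}^{-1}P_k^HB_k=G_k(Z^+),
\end{displaymath}
because $B_{k+1}^{-1}(I-Z)^H=(I-Z^+)B_{k+1}^{-1}$ with $Z^+=B_{k+1}^{-1}Z^HB_{k+1}$. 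With $Z=E_{k+1,+}^\nu/$ and the induction hypothesis, $Z^+=E_{k+1,+}^\nu\backslash$, and this gives exactly $E_{k,+}^\nu\backslash$. The bookkeeping here, making sure the adjoints on the two levels match through $B_{k+1}=P_k^HB_kP_k$, is the main point to get right. I expect it to be the only delicate step.

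\textbf{Claim 3).} This follows directly from 1), 2) and \cref{prop:B-norm lambda_max}, applied on level $k$ with $B_k$. Set $C=E_{k,+}^\nu\backslash$. Then $E_{k,+}^{\nu,\nu}=C^+C$, so $\norm{E_{k,+}^{\nu,\nu}}_{B_k}=\norm{C^+C}_{B_k}=\norm{C}_{B_k}^2=\norm{C^+}_{B_k}^2$. Note that 2) and 3) do not need the induction for 3) itself, only 1) and 2) at level $k$.
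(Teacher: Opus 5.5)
Your proof is correct and follows the same route as the paper. It uses downward induction on $k$: 1) comes from expanding the product of the two coarse-grid factors, 2) from pushing the $B_k$-adjoint through the coarse correction via $P_k=B_k^{-1}A_k^HR_kS_k$ and $A_{k+1}=S_k^{-H}B_{k+1}$, and 3) from $\norm{C^+C}_{B_k}=\norm{C}_{B_k}^2=\norm{C^+}_{B_k}^2$. Rewriting $A_{k+1}^{-1}R_k^HA_k=B_{k+1}^{-1}P_k^HB_k$ once and then proving $G_k(Z_1)G_k(Z_2)=G_k(Z_1Z_2)$ and $G_k(Z)^+=G_k(Z^+)$ is just tidier bookkeeping of the same computation.

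You also correctly state the level-$(k+1)$ induction hypothesis with respect to $B_{k+1}$; the paper's displayed hypothesis uses $B_k$, which is a typo. Your remark that 1) needs $B_k$-orthogonality is stronger than necessary, since only $A_{k+1}^{-1}R_k^HA_kP_k=I$ is used there.
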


\begin{proof}
For simplicity we set $G_k = (I-(M^{-1}_k A_k)^+)$ and $H_k = (I-M_k^{-1}A_k)$. Then it holds $H_k^+ = G_k$. Since $\calR(P_k) = \calR(B_k^{-1}A_k^HR_k)$, for each level $k =0, \dots, L$ there exists a nonsingular matrix $S_k \in \C^{n_{k+1} \times n_{k+1}}$ such that $P_k = B_k^{-1}A_k^HR_kS_k$ and $A_{k+1} = S_k^{-H}B_{k+1}$ holds (see \cref{eq:relation_Bk_Ak}).

We prove \emph{1)} by induction. The base case $E_{L,+}^{\nu_1, \nu_2} = 0 = E_{L,+}^{\nu_2}/ \cdot E_{L,+}^{\nu_1}\backslash$ is clear. Assume that $E_{k+1,+}^{\nu_1, \nu_2} = \Ekoneslashpnu \cdot \Ekonebackpnu$ holds for some $k+1 \in \{L+1, \ldots ,1 \}$. Then we have
\begin{align*}
    \Ekslashpnu &\cdot \Ekbackpnu = G_k^{\nu_2}\left(I-P_{k}(I-\Ekoneslashpnu)A_{k+1}^{-1}R_k^HA_k^{\nu_1}\right) \cdot
    \\ & \hspace*{3.5cm} \left(I-P_{k}(I-\Ekonebackpnu)A_{k+1}^{-1}R_k^HA_k\right)H_k^{\nu_1}
    \\ &= G_k^{\nu_2}\left( (I-\Pi_{A_k})+P_{k}\Ekoneslashpnu A_{k+1}^{-1}R_k^H A_k \right) \cdot 
    \\ & \hspace*{2cm} \left((I-\Pi_{A_k})+P_{k}\Ekonebackpnu A_{k+1}^{-1}R_k^H A_k \right)H_k^{\nu_1}
    \\ &= G_k^{\nu_2}\left( (I-\Pi_{A_k})+ P_k \Ekoneslashpnu A_{k+1}^{-1}R_k^H A_k P_k \Ekonebackpnu A_{k+1}^{-1}R_k^HA_k\right)H_k^{\nu_1}
    \\ &= G_k^{\nu_2}\left( (I-\Pi_{A_k}) + P_k \Ekoneslashpnu\Ekonebackpnu A_{k+1}^{-1}R_k^HA_k\right)H_k^{\nu_1}
    \\ &= G_k^{\nu_2}\left( (I-\Pi_{A_k}) + P_k E_{k+1,+}^{\nu_1, \nu_2} A_{k+1}^{-1}R_k^HA_k\right)H_k^{\nu_1}
    \\ &= G_k^{\nu_2}(I- P_k(I-E_{k+1,+}^{\nu_1, \nu_2})A_{k+1}^{-1}R_k^HA_k)H_k^{\nu_1} = E_{k,+}^{\nu_1, \nu_2}
\end{align*}
where we used $(I-\Pi_{A_k})P_k = 0$. 

For \emph{2)} let $\nu \in \N$. We use induction again. The base case is clear since $(E_{L,+}^{\nu}/)^+ = 0 = E_{L,+}^{\nu}\backslash$. Assume that 
\begin{equation*}
    (E_{k+1,+}^\nu/)^+ = B_k^{-1} (E_{k+1,+}^\nu/)^H B_k = E_{k+1,+}^\nu\backslash
\end{equation*}
holds for some $k+1 \in \{L+1, \dots, 1\}$. Then this is equivalent to $B_k^{-1}(E_{k+1,+}^\nu/)^H = E_{k+1,+}^\nu\backslash B_k^{-1}$. It holds
\begin{align*}
    (E_{k,+}^\nu/)^+ &= \left(I-P_{k}(I-E_{k+1,+}^\nu/)A_{k+1}^{-1}R_k^HA_k \right)^+ \left(G_k^{\nu}\right)^+
    \\ &= B_k^{-1} (I-A_k^HR_kA_{k+1}^{-H}(I-E_{k+1,+}^\nu/)^HP_k^H)B_k H_k^\nu
    \\ &= (I-B_k^{-1}A_k^HR_kA_{k+1}^{-H}(I-E_{k+1,+}^\nu/)^HP_k^HB_k) H_k\nu
    \\ &= (I-P_kS_k^{-1} (S_k^{-H} B_{k+1})^{-H}(I-E_{k+1,+}^\nu/)^H S_k^HR_k^HA_k) H_k^\nu
    \\ &= (I-P_kS_k^{-1}S_k B_{k+1}^{-1}(I-E_{k+1,+}^\nu/)^H S_k^HR_k^HA_k) H_k^\nu
    \\ &= (I-P_k (I-E_{k+1,+}^\nu\backslash) B_{k+1}^{-1} S_k^HR_k^HA_k) H_k^\nu
    \\ &= (I-P_k (I-E_{k+1,+}^\nu\backslash) A_{k+1}^{-1} R_k^HA_k) H_k^\nu = E_{k,+}^\nu\backslash.
\end{align*}

For \emph{3)} let $\nu_1 = \nu_2 = \nu$. With \emph{1)} and \emph{2)} we get
\begin{equation*}
    \norm{E_{k,+}^{\nu, \nu}}_{B_k} = \norm{E_{k,+}^\nu/  E_{k,+}^\nu\backslash}_{B_k} = \norm{E_{k,+}^\nu/ (E_{k,+}^\nu/)^+}_{B_k} = \norm{E_{k,+}^\nu/}_{B_k}^2
\end{equation*}
and
\begin{equation*}
    \norm{E_{k,+}^{\nu, \nu}}_{B_k} = \norm{E_{k,+}^\nu/ E_{k,+}^\nu\backslash}_{B_k} = \norm{(E_{k,+}^\nu\backslash)^+ E_{k,+}^\nu\backslash}_{B_k} = \norm{E_{k,+}^\nu\backslash}_{B_k}^2
\end{equation*}
for all $k = L, \dots, 0$.
\end{proof}

Next, we extend McCormick's V-cycle bound \cite{McC1984,McC1985} for HPD matrices to nonsymmetric indefinite matrices. Since the following results require $\nu_1 = \nu_2 = 1$, we omit the superindices and simply write $E_{k,+}$, $\Ekslashp$ and $\Ekbackp$ for $E_{k,+}^{1,1}$, $E_{k,+}^1/$ and $E_{k,+}^1\backslash$, respectively. 

\begin{theorem} \label{thm:V-cycle Eplus}
    Consider the V-cycle algorithm as in \cref{eq:multilevel V-cycle}. For every level $k = 0, \dots, L-1$ let the smoothing property 
    \begin{equation}\label{eq:Vcycle smoothing_cond McC}
        \norm{(I-(M^{-1}_k A_k)^+)e}_{B_k}^2 \leq \alpha \norm{(I-\Pi_{A_k})e}_{B_k}^2 + \norm{\Pi_{A_k} e}_{B_k}^2
    \end{equation}
    be satisfied for some $\alpha \in [0,1)$ and all $e\in \mathbb{C}^{n_k}$. Then it holds
    \begin{equation*}
        \norm{E_{k,+}}_{B_k} \leq \alpha
    \end{equation*}
    for all levels $k = 0, \dots, L$.
\end{theorem}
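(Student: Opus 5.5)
Since $\nu_1=\nu_2=1$, \cref{lem:prop_Vcycle_error}~(\labelcref{lem:enum:norm_eq Esouth}) gives $\norm{E_{k,+}}_{B_k} = \norm{\Ekslashp}_{B_k}^2$. It therefore suffices to prove $\norm{\Ekslashp}_{B_k}^2 \le \alpha$, i.e.
\begin{equation*}
  \norm{\Ekslashp e}_{B_k}^2 \le \alpha \norm{e}_{B_k}^2 \quad \text{for all } e \in \C^{n_k}.
\end{equation*}
I will prove this by downward induction on $k$, starting from $k=L$, where $E_{L,+}/ = 0$ and the bound is trivial. For the induction step, assume $\norm{E_{k+1,+}/}_{B_{k+1}}^2 \le \alpha$. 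The definition reads $\Ekslashp = G_k\big((I-\Pi_{A_k}) + P_k\,E_{k+1,+}/\,A_{k+1}^{-1}R_k^HA_k\big)$ with $G_k = I-(M_k^{-1}A_k)^+$. Apply the smoothing property \cref{eq:Vcycle smoothing_cond McC} to the vector $f := (I-\Pi_{A_k})e + P_kE_{k+1,+}/\,A_{k+1}^{-1}R_k^HA_k e$. This gives
\begin{equation*}
  \norm{\Ekslashp e}_{B_k}^2 \le \alpha\norm{(I-\Pi_{A_k})f}_{B_k}^2 + \norm{\Pi_{A_k}f}_{B_k}^2 .
\end{equation*}

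The two components of $f$ separate cleanly. Since $\Pi_{A_k}=\Pi_{B_k}(P_k,P_k)$ is a projection onto $\calR(P_k)$, we have $(I-\Pi_{A_k})f = (I-\Pi_{A_k})e$ and $\Pi_{A_k}f = P_kE_{k+1,+}/\,A_{k+1}^{-1}R_k^HA_ke$. To estimate the second term, I use the identity $\norm{P_k y}_{B_k} = \norm{y}_{B_{k+1}}$, which holds because $B_{k+1}=P_k^HB_kP_k$. This gives
\begin{equation*}
  \norm{\Pi_{A_k}f}_{B_k}^2 = \norm{E_{k+1,+}/\,y}_{B_{k+1}}^2 \le \alpha\norm{y}_{B_{k+1}}^2, \qquad y := A_{k+1}^{-1}R_k^HA_k e,
\end{equation*}
where the inequality is the induction hypothesis. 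Finally, $\norm{y}_{B_{k+1}} = \norm{P_ky}_{B_k} = \norm{\Pi_{A_k}e}_{B_k}$, because $P_ky = \Pi_{A_k}e$ by the definition of $\Pi_{A_k}$.

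Combining the two estimates,
\begin{equation*}
  \norm{\Ekslashp e}_{B_k}^2 \le \alpha\big(\norm{(I-\Pi_{A_k})e}_{B_k}^2 + \norm{\Pi_{A_k}e}_{B_k}^2\big) = \alpha\norm{e}_{B_k}^2 .
\end{equation*}
The last equality is the Pythagorean identity for the $B_k$-orthogonal projection $\Pi_{A_k}$; its $B_k$-orthogonality is guaranteed by \cref{eq:multilevel V-cycle} together with \cref{lem:equiv_cond Pi B-ortho}. This completes the induction, and squaring through \cref{lem:prop_Vcycle_error} yields $\norm{E_{k,+}}_{B_k}\le\alpha$ for every level $k$.

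The main point needing care is confirming that $\Ekslashp$ really has this factored form, in particular that $P_kA_{k+1}^{-1}R_k^HA_k = \Pi_{A_k}$ and that the coarse term lies in $\calR(P_k)$. This follows from the identity $(I-\Pi_{A_k})P_k=0$, already used in the proof of \cref{lem:prop_Vcycle_error}. The other point is the isometry $\norm{P_ky}_{B_k}=\norm{y}_{B_{k+1}}$, which is exactly why the coarse inner products $B_{k+1}$ are defined as they are.
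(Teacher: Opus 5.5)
Your proof is correct and follows the paper's argument essentially step by step. Both proceed by downward induction on $\norm{E_{k,+}/}_{B_k}^2$ via \cref{lem:prop_Vcycle_error}, apply the smoothing property to $(I-\Pi_{A_k})e + P_kE_{k+1,+}/\,A_{k+1}^{-1}R_k^HA_ke$, and then use the isometry $\norm{P_ky}_{B_k}=\norm{y}_{B_{k+1}}$ and the $B_k$-Pythagorean identity. The only cosmetic difference is that you insert the induction hypothesis directly, whereas the paper first bounds by $\max\{\alpha,\norm{E_{k+1,+}/}_{B_{k+1}}^2\}$.
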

\begin{proof}
    The proof is done by induction from $L$ to $0$. Since $E_{L,+} =0$, the base case is trivial. Let
    \begin{equation*}
        \norm{E_{k+1,+}}_{B_{k+1}}  \leq \alpha 
    \end{equation*}
    be satisfied for some $k+1 \in \{L+1, \dots, 1\}$. Let $e \in \C^{n_k}$. Using the smoothing property~\cref{eq:Vcycle smoothing_cond McC} we get
    \begin{align*}
        \norm{\Ekslashp e}_{B_k}^2 &= \norm{(I-(M^{-1}_kA_k)^+)(I-P_k(I-\Ekoneslashp)A_{k+1}^{-1}R_k^HA_k)e}_{B_k}^2
        \\ &= \norm{(I-(M^{-1}_kA_k)^+)((I-\Pi_{A_k})e + P_k\Ekoneslashp A_{k+1}^{-1}R_k^HA_ke)}_{B_k}^2
        \\ &\leq \alpha \norm{(I-\Pi_{A_k})^2e}_{B_k}^2 + \norm{\Pi_{A_k} P_k\Ekoneslashp A_{k+1}^{-1}R_k^HA_ke}_{B_k}^2
        \\ &= \alpha \norm{(I-\Pi_{A_k})e}_{B_k}^2 + \norm{P_k\Ekoneslashp A_{k+1}^{-1}R_k^HA_ke}_{B_k}^2.
    \end{align*}
    Since $B_{k+1} = P_k^HB_kP_k$, the second term can be estimated further by
    \begin{align*}
        \norm{P_k\Ekoneslashp A_{k+1}^{-1}R_k^HA_ke}_{B_k}^2 &= \norm{\Ekoneslashp A_{k+1}^{-1}R_k^HA_ke}_{B_{k+1}}^2
        \\ &\leq \norm{\Ekoneslashp}_{B_{k+1}}^2 \norm{A_{k+1}^{-1}R_k^HA_ke}_{B_{k+1}}^2
        \\ &= \norm{\Ekoneslashp}_{B_{k+1}}^2 \norm{P_k A_{k+1}^{-1}R_k^HA_ke}_{B_{k}}^2
        \\ &= \norm{\Ekoneslashp}_{B_{k+1}}^2 \norm{\Pi_{A_k} e}_{B_{k}}^2.
    \end{align*}
    Since $\Pi_{A_k}e$ and $(I-\Pi_{A_k})e$ are orthogonal in the $B_k$-inner product, we get
    \begin{equation*}
        \norm{(I-\Pi_{A_k})e}_{B_k}^2 + \norm{\Pi_{A_k}e}_{B_k}^2 = \norm{(I-\Pi_{A_k} + \Pi_{A_k})e}_{B_k}^2 = \norm{e}_{B_k}^2.
    \end{equation*}
    Finally, with the induction hypothesis and \cref{lem:prop_Vcycle_error}~(\labelcref{lem:enum:norm_eq Esouth}) we obtain
    \begin{align*}
        \norm{\Ekslashp e}_{B_k}^2 &\leq \alpha \norm{(I-\Pi_{A_k})e}_{B_k}^2 + \norm{\Ekoneslashp}_{B_{k+1}}^2 \norm{\Pi_{A_k} e}_{B_{k}}^2
        \\ &\leq \max\{\alpha, \norm{\Ekoneslashp}_{B_{k+1}}^2\}(\norm{(I-\Pi_{A_k}) e}_{B_k}^2 + \norm{\Pi_{A_k} e}_{B_k}^2)
        \\ &= \max\{\alpha, \norm{E_{k+1,+}}_{B_{k+1}}\}\norm{e}_{B_k}^2 \leq \alpha \norm{e}_{B_k}^2
    \end{align*}
     Thus, it holds
    \begin{displaymath}
        \norm{E_{k,+}}_{B_k} = \norm{\Ekslashp}_{B_k}^2 = \sup_{e \neq 0} \frac{\norm{\Ekslashp e}_{B_k}^2}{\norm{e}_{B_k}^2} \leq \alpha.
    \end{displaymath}
\end{proof}

Note that the smoothing property~\cref{eq:Vcycle smoothing_cond McC} implies the usual smoothing assumption~\cref{eq:Vcycle smoothing_assumpt level-wise} showing that \cref{eq:Vcycle smoothing_cond McC} is a stronger assumption.

In the above result $\alpha$ is independent of the level $k$. We can also choose the smallest possible parameter depending on each level such that the smoothing property~\cref{eq:Vcycle smoothing_cond McC} is fulfilled. This yields the following result which shows that the convergence constant $\alpha$ is the maximum over all level wise constants.

\begin{corollary} \label{coro:V-cycle Eplus levelwise constant}
    Consider the V-cycle algorithm as in \cref{eq:multilevel V-cycle}. Let $\nu_1 = \nu_2 = 1$ and for every level $k = L, \dots, 0$ let 
    \begin{equation}\label{eq:alpha_k}
        \alpha_k = \sup_{(I-\Pi_{A_k})e\neq 0} \frac{\norm{(I-(M_k^{-1}A_k)^+)e}_{B_k}^2-\norm{\Pi_{A_k} e}_{B_k}^2}{\norm{(I-\Pi_{A_k})e}_{B_k}^2}.
    \end{equation}
    Assume that $\alpha_k  \in [0,1)$ and the usual level wise smoothing assumption \cref{eq:Vcycle smoothing_assumpt level-wise} is satisfied. Then the smoothing property~\cref{eq:Vcycle smoothing_cond McC} is fulfilled with $\alpha = \max_{k=0, \dots, L} \alpha_k$ and it holds 
    \begin{equation*}
        \norm{E_{0,+}}_B \leq \alpha =  1 - \frac{1}{C_V}
    \end{equation*}
    with 
    \begin{equation*}
        C_V = \max_{k=0, \dots, L} \sup_{e \neq 0} \frac{\norm{(I-\Pi_{A_k})B_k^{-1}e}_{B_k}^2}{\norm{e}_{\widehat M_k^{-1}}^2}.
    \end{equation*}
\end{corollary}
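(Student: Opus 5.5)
The plan has two parts: first the direct consequence of \cref{thm:V-cycle Eplus}, then the identification of $\alpha$ with $1-1/C_V$ via a level-wise Rayleigh-quotient computation.

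\textbf{Part one: the smoothing property and the bound.} By the definition \cref{eq:alpha_k}, for every $e$ with $(I-\Pi_{A_k})e\neq0$ we have
\begin{equation*}
\norm{(I-(M_k^{-1}A_k)^+)e}_{B_k}^2\le \alpha_k\norm{(I-\Pi_{A_k})e}_{B_k}^2+\norm{\Pi_{A_k}e}_{B_k}^2 .
\end{equation*}
If instead $(I-\Pi_{A_k})e=0$, then $e=\Pi_{A_k}e$, and the required inequality $\norm{(I-(M_k^{-1}A_k)^+)e}_{B_k}\le\norm{e}_{B_k}$ follows from the level-wise smoothing assumption \cref{eq:Vcycle smoothing_assumpt level-wise}. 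Here I use $\norm{C^+}_{B_k}=\norm{C}_{B_k}$ from \cref{prop:B-norm lambda_max}. Replacing each $\alpha_k$ by $\alpha=\max_k\alpha_k\in[0,1)$ gives \cref{eq:Vcycle smoothing_cond McC} on every level, and \cref{thm:V-cycle Eplus} then yields $\norm{E_{0,+}}_B\le\alpha$.

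\textbf{Part two: rewriting $\alpha_k$.} I expect this identification to be the main obstacle. Put $G_k=(I-(M_k^{-1}A_k)^+)$. Using $\norm{e}_{B_k}^2=\norm{(I-\Pi_{A_k})e}_{B_k}^2+\norm{\Pi_{A_k}e}_{B_k}^2$, the numerator of \cref{eq:alpha_k} equals
\begin{equation*}
\norm{G_ke}^2_{B_k}-\norm{e}^2_{B_k}+\norm{(I-\Pi_{A_k})e}^2_{B_k}.
\end{equation*}
Therefore
\begin{equation*}
1-\alpha_k=\inf_{(I-\Pi_{A_k})e\neq0}\frac{\norm{e}^2_{B_k}-\norm{G_ke}^2_{B_k}}{\norm{(I-\Pi_{A_k})e}^2_{B_k}} .
\end{equation*}
The numerator is $e^HB_k(I-G_k^+G_k)e$. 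Since $G_k^+=H_k$, we have $G_k^+G_k=H_kH_k^+=I-\hatMinvB$ at level $k$, so the numerator equals $e^HB_k\widehat M_k^{-1}B_ke=\norm{B_ke}^2_{\widehat M_k^{-1}}$. By \cref{thm:smoothing_assumpt_BatNab}, $\widehat M_k^{-1}$ is HPD under the smoothing assumption. Substituting $e=B_k^{-1}f$ gives
\begin{equation*}
1-\alpha_k=\Big(\sup_{f\neq0}\frac{\norm{(I-\Pi_{A_k})B_k^{-1}f}^2_{B_k}}{\norm{f}^2_{\widehat M_k^{-1}}}\Big)^{-1}=:1/C_k .
\end{equation*}
The supremum can be taken over all $f\neq0$, since vectors with $(I-\Pi_{A_k})B_k^{-1}f=0$ contribute $0$ to it. This leaves $C_k\ge1$ well defined and positive.

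Finally, $\alpha_k=1-1/C_k$ is increasing in $C_k$, so $\max_k\alpha_k=1-1/\max_kC_k=1-1/C_V$. The coarsest level $k=L$ has $E_{L,+}=0$ and no smoother, so it must either be excluded from the maximum or treated by convention; I would handle it as the paper does for $\alpha_L$.
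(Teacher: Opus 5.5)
Your proposal is correct and follows the paper's proof essentially step for step. It uses the same case split on whether $(I-\Pi_{A_k})e$ vanishes to verify \cref{eq:Vcycle smoothing_cond McC} with $\alpha=\max_k\alpha_k$ before invoking \cref{thm:V-cycle Eplus}, the same rewriting via $B_k$-orthogonality and the identity $\norm{e}_{B_k}^2-\norm{(I-(M_k^{-1}A_k)^+)e}_{B_k}^2=\norm{B_ke}_{\widehat M_k^{-1}}^2$, the substitution $r=B_ke$, and the monotonicity of $x\mapsto 1-1/x$; your remark that level $k=L$ has no smoother or coarse-grid correction is a fair point about the statement's indexing, not a gap in your argument.
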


\begin{proof}
    Let $\alpha_k$ be defined as in \cref{eq:alpha_k}. Then set $\alpha = \max_{k=0, \dots, L} \alpha_k \in [0,1)$. Multiplying \cref{eq:alpha_k} by $\norm{(I-\Pi_{A_k})e}_{B_k}^2$ for any $e \in \C^{n_k}$ with $(I-\Pi_{A_k})e \neq 0$ yields 
    \begin{align*}
        \norm{(I-(M_k^{-1}A_k)^+)e}_{B_k}^2 &\leq \alpha_k \norm{(I-\Pi_{A_k})e}_{B_k}^2 + \norm{\Pi_{A_k}e}_{B_k}^2 
        \\ & \leq \alpha \norm{(I-\Pi_{A_k})e}_{B_k}^2 + \norm{\Pi_{A_k}e}_{B_k}^2.
    \end{align*}
    For all $e \in \C^{n_k}$ with $(I-\Pi_{A_k})e = 0$ it holds $e = \Pi_{A_k}e$ and with the smoothing assumption \cref{eq:Vcycle smoothing_assumpt level-wise} it follows
    \begin{align*}
        \norm{(I-(M_k^{-1}A_k)^+)e}_{B_k}^2 &\leq \norm{I-(M_k^{-1}A_k)^+)}_{B_k}^2 \norm{\Pi_{A_k}e}_{B_k}^2 
        \\ &< \norm{\Pi_{A_k}e}_{B_k}^2 = \alpha \norm{(I-\Pi_{A_k})e}_{B_k}^2 + \norm{\Pi_{A_k}e}_{B_k}^2.
    \end{align*}
    Therefore, the smoothing property~\cref{eq:Vcycle smoothing_cond McC} is satisfied with the constant $\alpha$ and for all $e \in \C^{n_k}$.
    Hence we get
    \begin{equation*}
        \norm{E_{0,+}}_B \leq \alpha.
    \end{equation*}
    Due the $B_k$-orthogonality of $\Pi_{A_k}$ it holds
    \begin{align*}
        \alpha_k &= \sup_{(I-\Pi_{A_k})e\neq 0} \frac{\norm{(I-(M_k^{-1}A_k)^+)e}_{B_k}^2-\norm{\Pi_{A_k} e}_{B_k}^2}{\norm{(I-\Pi_{A_k})e}_{B_k}^2} 
        \\ &= \sup_{(I-\Pi_{A_k})e\neq 0} \frac{\norm{(I-(M_k^{-1}A_k)^+)e}_{B_k}^2-\norm{e}_{B_k}^2+\norm{(I-\Pi_{A_k})e}_{B_k}^2}{\norm{(I-\Pi_{A_k})e}_{B_k}^2}
        \\ &=  1- \inf_{(I-\Pi_{A_k})e\neq 0} \frac{\norm{e}_{B_k}^2 - \norm{(I-(M_k^{-1}A_k)^+)e}_{B_k}^2}{\norm{(I-\Pi_{A_k})e}_{B_k}^2}. 
    \end{align*}
    The nominator of the fraction in the last line can be further computed as follows
    \begin{align*}
        \norm{e}_{B_k}^2- &\phantom{\, } \norm{(I-(M_k^{-1}A_k)^+)e}_{B_k}^2 
        \\ &= e^H B_k e - e^H(I-(M_k^{-1}A_k)^+)^HB_k(I-(M_k^{-1}A_k)^+)e
        \\ &= e^H \left( B_k - (I-B_k(M^{-1}_kA_k)B_k^{-1}) B_k(I-(M_k^{-1}A_k)^+)\right)e
        \\ &= e^H \left(B_k - B_k(I-M_k^{-1}A_k)(I-(M_k^{-1}A_k)^+) \right)e
        \\ &= e^HB_k (I-(I-M_k^{-1}A_k)(I-(M_k^{-1}A_k)^+))e
        \\ &= e^H B_k (I-(I- \widehat M_k^{-1}B_k))e
        \\ &= e^H B_k \widehat M_k^{-1}B_ke  = \norm{B_ke}_{\widehat M_k^{-1}}^2
    \end{align*}
    where we define $\widehat M_k^{-1}B_k = M_k^{-1}A_k + (M_k^{-1}A_k)^+ - M_k^{-1}A_k (M_k^{-1}A_k)^+$. This simply is a level-wise definition of $\hatMinvB$ in \cref{eq:hatMinvB}. Since the smoothing assumption \cref{eq:Vcycle smoothing_assumpt level-wise} is satisfied at every level $k$, the matrix $\widehat M_k^{-1}$ is HPD (cf. \cref{thm:smoothing_assumpt_BatNab}). Then it holds
    \begin{align*}
        \alpha_k &= 1 - \inf_{(I-\Pi_{A_k})e\neq 0} \frac{\norm{B_ke}_{\widehat M_k^{-1}}^2}{\norm{(I-\Pi_{A_k})e}_{B_k}^2}
        = 1- \left( \sup_{e\neq 0} \frac{\norm{(I-\Pi_{A_k})e}_{B_k}^2}{\norm{B_ke}_{\widehat M_k^{-1}}^2} \right)^{-1}
        \\ &= 1- \left( \sup_{r\neq 0} \frac{\norm{(I-\Pi_{A_k})B_k^{-1}r}_{B_k}^2}{\norm{r}_{\widehat M_k^{-1}}^2} \right)^{-1}.
    \end{align*}
    The second equality holds because the infimum is always positive. Finally, we get
     \begin{align*}
        \alpha &= \max_{k=0, \dots, L} \alpha_k = \max_{k=0, \dots, L} 1- \left( \sup_{e\neq 0} \frac{\norm{(I-\Pi_{A_k})B_k^{-1}e}_{B_k}^2}{\norm{e}_{\widehat M_k^{-1}}^2} \right)^{-1}
        \\ &= 1-\min_{k=0, \dots, L} \left( \sup_{e\neq 0} \frac{\norm{(I-\Pi_{A_k})B_k^{-1}e}_{B_k}^2}{\norm{e}_{\widehat M_k^{-1}}^2} \right)^{-1}
        \\ &= 1 - \left(\max_{k=0, \dots, L} \sup_{e\neq 0} \frac{\norm{(I-\Pi_{A_k})B_k^{-1}e}_{B_k}^2}{\norm{e}_{\widehat M_k^{-1}}^2} \right)^{-1} = 1- C_V^{-1}
    \end{align*}
    and $\norm{E_{0,+}}_B \leq \alpha = 1 - C_V^{-1} $.
\end{proof}

\begin{remark}
    Assuming that the matrix $I_{n_k}-\widehat M_k^{-1}B_k$ is nonsingular at every level $k = L, \dots, 0$, we can relate the constant $C_V$ to the constant $K_{G,C}(P)$ from \cref{eq:def KG} for certain HPD matrices $G$ and $C$. For the interpolation $P_k$ from the V-cycle algorithm \cref{eq:multilevel V-cycle} we define $Q_{B_k} = P_k(P_k^HB_kP)^{-1}P_k$, analogous to $Q_B$ in the proof of \cref{thm:B-norm Esouth 1-lambdamin}. Then it holds $\Pi_{A_k} = Q_{B_k}B_k$ and we obtain
    \begin{align*}
        \norm{(I-\Pi_{A_k})B_k^{-1}e}_{B_k}^2 &= \norm{(I-Q_{B_k}B_k)B_k^{-1}e}_{B_k}^2 
        \\ &= e^HB_k^{-1}(I-Q_{B_k}B_k)^HB_k(I-Q_{B_k}B_k)B_k^{-1}e 
        \\ &= e^H(I-Q_{B_k}B_k)B_k^{-1}e.
    \end{align*}
    Together with the above proof and the Courant-Fisher theorem this implies
    \begin{align*}
        1- \alpha_k &= \sup_{e\neq 0} \frac{\norm{(I-\Pi_{A_k})B_k^{-1}e}_{B_k}^2}{\norm{e}_{\widehat M_k^{-1}}^2} = \sup_{e\neq 0} \frac{e^H(I-Q_{B_k}B_k)B_k^{-1}e}{e^H\widehat M_k^{-1}e}
        \\ &= \sup_{v \neq 0} \frac{v^H\widehat M_k^{\frac12}(I-Q_{B_k}B_k)B_k^{-1}\widehat M_k^\frac12 v}{v^Hv} = \lambda_{max} \left( \widehat M_k^\frac12(I-Q_{B_k}B_k)B_k^{-1}\widehat M_k^\frac12 \right)
        \\ &= \lambda_{max} \left( B_k^{-1}\widehat M_k(I- \Pi_{A_k}) \right) = K_{\widehat M_k, B_k}(P_k).
    \end{align*}
    Here we used the same reasoning as in the proof of \cref{thm:B-norm Esouth 1-K}. Finally, we can conclude 
    \begin{equation*}
        C_V = (1-\alpha)^{-1} = \frac{1}{1-\max_{k} \alpha_k} = \frac{1}{\min_k K_{\widehat M_k, B_k}(P_k)} = \max_k \frac{1}{K_{\widehat M_k, B_k}(P_k)}.
    \end{equation*}
\end{remark}

\Cref{thm:B-norm Eplus 1-K} only holds for the operator $E_{k,+}^{1,1}$. However, we can obtain the same results for $E_{k}^{1,1}$ if we assume that $M_k^{-1}A_k$ is $B_k$-orthogonal at every level.  

\begin{corollary}
    Consider the V-cycle algorithm as in \cref{eq:multilevel V-cycle}. Let $M_k^{-1}A_k$ be $B_k$-orthogonal for every level $k = 0, \dots, L$ and and let the smoothing property
    \begin{equation*}
        \norm{(I-M^{-1}_k A_k)e}_{B_k}^2 = \norm{(I-(M^{-1}_k A_k)^+)e}_{B_k}^2 \leq \alpha \norm{(I-\Pi_{A_k})e}_{B_k}^2 + \norm{\Pi_{A_k} e}_{B_k}^2
    \end{equation*}
    be satisfied for some $\alpha \in [0,1)$ and all $e\in \mathbb{C}^{n_k}$. Then it holds
    \begin{equation*}
        \norm{E_k}_{B_k} \leq \alpha
    \end{equation*}
    for every level $k = 0, \dots, L$.
\end{corollary}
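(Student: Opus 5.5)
The plan is to reduce the statement to \cref{thm:V-cycle Eplus} by showing that $E_k = E_{k,+}$ (with $\nu_1=\nu_2=1$) on every level when each $M_k^{-1}A_k$ is $B_k$-orthogonal.

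\textbf{Step 1: the error operators agree level by level.} If $M_k^{-1}A_k$ is $B_k$-orthogonal, then $(M_k^{-1}A_k)^+ = M_k^{-1}A_k$, where the adjoint is taken with respect to $B_k$. Hence $I-(M_k^{-1}A_k)^+ = I-M_k^{-1}A_k$ for every $k=0,\dots,L-1$. I would then argue by downward induction on $k$:
\begin{itemize}
\item The base case is $E_L = 0 = E_{L,+}$.
\item Assume $E_{k+1}=E_{k+1,+}$.
\item Compare the recursions \cref{eq:Eplus levelwise} and \cref{eq:Esouth levelweise}. They involve the same $P_k$, $R_k$, $A_{k+1}$ and, by hypothesis, the same coarse operator. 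They also have identical post-smoothing factors by the observation above. Therefore $E_k=E_{k,+}$.
\end{itemize}

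\textbf{Step 2: the smoothing property.} With $(M_k^{-1}A_k)^+=M_k^{-1}A_k$, the first equality in the assumed smoothing property is immediate. The assumed inequality is then literally \cref{eq:Vcycle smoothing_cond McC} with the same $\alpha\in[0,1)$ and for all $e$. The coarse-grid corrections $\Pi_{A_k}$ are $B_k$-orthogonal by the construction \cref{eq:multilevel V-cycle}, exactly as in the setting of \cref{thm:V-cycle Eplus}.

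\textbf{Step 3: conclude.} Applying \cref{thm:V-cycle Eplus} gives $\norm{E_{k,+}}_{B_k}\le\alpha$ for all $k$. By Step 1, $\norm{E_k}_{B_k}=\norm{E_{k,+}}_{B_k}\le\alpha$.

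The only point needing care is the induction in Step 1. The coarse-level operator enters both recursions through the same expression $P_k(I-E_{k+1})A_{k+1}^{-1}R_k^HA_k$, so the induction goes through directly. The hypothesis on $M_L^{-1}A_L$ at the coarsest level is unused, since $E_L=E_{L,+}=0$. I do not expect a genuine obstacle here.
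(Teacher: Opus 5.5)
Your proposal is correct and follows essentially the same route as the paper. The paper gives no separate proof here: it relies on the earlier remark that $E_k$ and $E_{k,+}$ coincide when $M_k^{-1}A_k$ is $B_k$-orthogonal, and then applies \cref{thm:V-cycle Eplus}; your downward induction through the recursive coarse-level term is exactly the detail that makes this coincidence rigorous.
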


\begin{remark}
    If in the V-cycle algorithm \cref{eq:multilevel V-cycle} the interpolation $P_{k-1}$ is chosen as $P_{k-1} = B_{k-1}^{-1}A_{k-1}^HR_{k-1}$, then $M_k^{-1}A_k$ is $B_k$-orthogonal if and only if $M_k$ is Hermitian. To see this we first notice that the change of basis matrix $S_{k-1}$ that transforms $\calR(B_{k-1}^{-1}A_{k-1}^HR_{k-1})$ into $\calR(P_{k-1})$ is the identity, that is $S_{k-1} = I_{n_k}$. Hence, with \cref{eq:relation_Bk_Ak} we obtain
    \begin{equation*}
        (M_k^{-1}A_k)^+ = B_k^{-1}A_k^HM_k^{-H}B_k = B_k^{-1}B_k S_{k-1}^HM_k^{-H}S_{k-1}^HA_k = M_k^{-H}A_k
    \end{equation*}
    which is equal to $M_k^{-1}A_k$ if and only if $M_k$ is Hermitian. This is due to the fact that for $S_{k-1}= I_{n_k}$ it holds $B_k = A_k$ and we are in the HPD case where the $A_k$-adjoint of $M_k^{-1}A_k$ is given by $M_k^{-H}A_k$. 
\end{remark}

\section{Conclusion}

In this paper, we continued our unified framework for nonsymmetric AMG methods started in \cite{NabRoo2026a} which allows the use of arbitrary inner products given by an HPD matrix $B$. We proved more results on the convergence speed of nonsymmetric algebraic two-grid methods for the error operator $\Esouth$ and $\Eplus$, as well as comparison results showing that an increase of the number of coarse variables decreases the $B$-norm of error operators of these methods and thus increasing the convergence speed. We also considered nonsymmetric multilevel methods, in particular the V-cycle, and extended McCormick's landmark V-cycle bound \cite{McC1984,McC1985} to the nonsymmetric and indefinite case.

\bibliographystyle{siamplain}
\bibliography{literature_paper}

\end{document}